\newtheorem{thm}{Theorem}[section]
\newtheorem{prop}[thm]{Proposition}
\newtheorem{lem}[thm]{Lemma}
\newtheorem{ass}[thm]{Assumption}
\newtheorem{rem}[thm]{Remark}
\def\XXint#1#2#3{{\setbox0=\hbox{$#1{#2#3}{\int}$ }
\vcenter{\hbox{$#2#3$ }}\kern-.6\wd0}}
\newcommand{\dist}{\operatorname{dist}}
\newcommand{\indicatornoacc}[1]{ \mathds{1}_{ #1 } }
\newcommand{\loc}{\text{loc}}
\newcommand{\N}{\mathbb N}
\newcommand{\NN}{E_n^{\operatorname{nn}}}
\newcommand{\R}{\mathbb R}
\newcommand{\supp}{\operatorname{supp}}
\newcommand{\Ta}{T_1}
\newcommand{\Tb}{T_2}
\newcommand{\Tc}{T_3}
\newcommand{\Td}{T_4}
\newcommand{\Te}{T_5}
\newcommand{\Tf}{T_6}
\newcommand{\Tg}{T_7}
\newcommand{\Th}{T_8}
\newcommand{\Ti}{T_9}
\newcommand{\Vext}{U}
\newcommand{\Vreg}{V_{ \operatorname{reg} }}
\newcommand{\xto}[1]{\xrightarrow{#1}}
\newcommand{\bx}{\mathbf x}
\newcommand{\by}{\mathbf y}
\newcommand{\cP}{\mathcal P}
\newcommand{\tphi}{\varphi^*}
\newcommand{\tell}{\ell^*}
\newcommand{\tm}{m^*}
\newcommand{\tx}{\bx^*}
\newcommand{\oell}{\overline \ell}
\newcommand{\ovm}{\overline m}
\newcommand{\ophi}{\overline \varphi}
\newcommand{\orho}{\overline \rho}
\newcommand{\ox}{\overline \bx}
\newcommand{\oox}{\overline x}
\DeclareFontFamily{U}{mathx}{\hyphenchar\font45}
\DeclareFontShape{U}{mathx}{m}{n}{
      <5> <6> <7> <8> <9> <10>
      <10.95> <12> <14.4> <17.28> <20.74> <24.88>
      mathx10
      }{}
\DeclareSymbolFont{mathx}{U}{mathx}{m}{n}
\DeclareMathAccent{\widecheck}{0}{mathx}{"71}
\begin{document}

\title{Quantitative estimate of the continuum approximations of interacting particle systems in one dimension}

\author{M.~Kimura, P.~van Meurs}

\date{}

\maketitle 


\begin{abstract}
We consider a large class of interacting particle systems in 1D described by an  energy whose interaction potential is singular and non-local. This class covers Riesz gases (in particular, log gases) and applications to plasticity and approximation theory of functions. While it is well established that the minimisers of such interaction energies converge to a certain particle density profile as the number of particles tends to infinity, any bound on the rate of this convergence is only known in special cases by means of quantitative estimates. The main result of this paper extends these quantitative estimates to a large class of interaction energies by a different proof. The proof relies on one-dimensional features such as the convexity of the interaction potential and the ordering of the particles. The main novelty of the proof is the treatment of the singularity of the interaction potential by means of a carefully chosen renormalisation. 
\end{abstract}

\noindent \textbf{Keywords}: {Interacting particle system, calculus of variations, asymptotic analysis} \\
\textbf{MSC}: {
  82C22, 
  74Q05, 
  35A15, 
  74G10 
}

\section{Introduction}

We are interested in quantifying the difference between minimisers of interacting particle energies and the minimisers of the related energies for the particle density. The interacting particle energies are given by
\begin{gather}\label{En}
    E_n (\bx) 
    = \frac1{n^2} \sum_{i=1}^n \sum_{ j=0 }^{i-1} V (x_i - x_j) + \frac1n \sum_{i=0}^{n} \Vext (x_i),
\end{gather}
where $n+1$ is the number of particles, and 
\begin{equation} \label{Omega}
  \bx 
  := (x_0, x_1, \ldots , x_n) 
  \in \Omega 
  := \{ \by \in \R^{n+1} : y_0 < y_1 < \ldots < y_n \}
\end{equation}
is the list of ordered particle positions. The energies $E_n$ are the sum of two parts. We interpret the first part as the interaction part, in which $V$ is the interaction potential, and the second part as a confinement term, in which $U$ is the confining potential. Typical examples of $V$ and $U$ are plotted in Figure \ref{fig:V}. We aim to keep the assumptions on $V$ and $U$ as weak as possible. These assumptions are as follows. 

\begin{figure}[h]
\centering
\begin{tikzpicture}[scale=1.5, >= latex]    
\def \w {2}
        
\draw[->] (0,-.4) -- (0,\w);
\draw[->] (-\w,0) -- (\w,0) node[right] {$x$};
\draw[domain=0.21:\w, smooth, very thick] plot (\x,{-ln(\x / 1.5)});
\draw[domain=-\w:-0.21, smooth, very thick] plot (\x,{{-ln(-\x / 1.5)}});
\draw (0.3, \w) node[anchor = north west]{$V(x)$};

\begin{scope}[shift={(2.5*\w,0)},scale=1]
\draw[->] (0,0) -- (0,\w);
\draw[->] (-\w,0) -- (\w,0) node[right] {$x$};
\draw[domain=-1.5:\w, smooth, very thick] plot (\x,{ 6*(4/(\x + 4) + \x/4 -1) });
\draw[dashed] (-1.5, 0) node[below]{$z_1$} -- (-1.5, \w);
\fill (-1.5, 1.35) circle (.05);
\draw (\w, .9) node[anchor = south east]{$\Vext(x)$};
\end{scope} 
\end{tikzpicture} \\
\caption{Typical examples of $V$ and $\Vext$.}
\label{fig:V}
\end{figure}
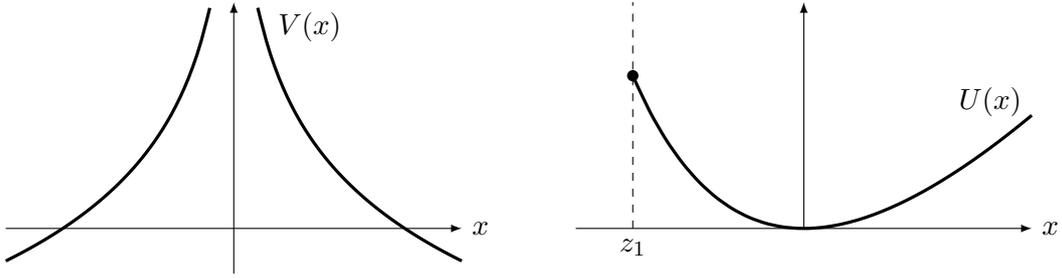 

\begin{ass}[$V$ and $U$] \label{ass:VU}
The interaction potential $V \in L_\loc^1 (\R)$ splits as $V = V_a + \Vreg$, where
\begin{equation} \label{Va}
  V_a (x) := \left\{ \begin{array}{ll}
  - \log |x|, \: & \text{if } a = 0 \\
  |x|^{-a}, & \text{if } 0 < a < 1,
  \end{array}  \right.
\end{equation} 
for a fixed parameter $a \in [0,1)$, and $\Vreg \in C^2(\R)$ is such that
\begin{gather} \label{V:props}
  V \text{ even}, \quad
  V \text{ convex on } (0,\infty), \quad
  \lim_{x \to \infty} \frac{V(x)}{x} = 0.
\end{gather} 

The domain of the confining potential $U : \R \to [0, \infty]$ is
\begin{equation*}
  D(U) := \{ x \in \R : U(x) < \infty \} = \overline{ (z_1, z_2) }
\end{equation*}
for some $-\infty \leq z_1 < z_2 \leq \infty$. It satisfies 
\begin{gather} \label{U:ass}
  U \in C^2 ( D(U) ), \quad
  U \text{ convex on } \R, \quad
  \min_{\R} U = 0, \quad
  \lim_{|x| \to \infty} U(x) = \infty. 
\end{gather}
\end{ass}

We interpret Assumption \ref{ass:VU} as follows. We consider $a$ as a fixed parameter which determines the singularity of $V$ at $0$. The part $\Vreg$ is a regular perturbation which determines the bulk and tails of $V$. Finite values for $z_i$ correspond to impenetrable barriers for the particle positions.
\smallskip

Given $E_n$, the related energy for the particle density $\rho$ is given by
\begin{equation} \label{E}
   E : \mathcal P (\R) \to \R \cup \{ +\infty \}, \qquad
   E (\rho) := \frac12 \int_\R \int_\R V(x-y) \, d\rho (y) \, d\rho (x) + \int_\R U(x) \, d\rho(x),
\end{equation}
where $\cP (\R)$ is the space of probability measures.

For various choices of $V$ and $U$, it is known 
(see, e.g., 
\cite{SaffTotik97,
GeersPeerlingsPeletierScardia13,
vanMeurs18,
VanMeursMunteanPeletier14}) 
that $E_n$ and $E$ attain their minimal value at some $\tx \in \Omega$ and $\orho \in \cP (\R)$ respectively, that $\orho \in \cP (\R)$ is unique, and that any sequence of minimisers $\tx$ (parametrised by $n$) converges to $\orho$ in a suitable topology as $n \to \infty$. Yet, any \emph{quantitative} estimate between $\tx$ and $\orho$ for finite $n$ is only available for special choices of $V$ and $U$ (see Section \ref{s:intro:disc}). The aim of this paper is to derive such an estimate for the much larger class of potentials $V$ and $U$ characterised in Assumption \ref{ass:VU}.
\smallskip

In order to give meaning to a quantitative estimate between $\tx$ and $\orho$, we construct from $\tx$ a probability density function $\tphi$, and seek to bound $\tphi - \orho$ in a suitable topology. More generally, for any $\bx \in \Omega$, we define a related density function $\varphi \in \cP(\R)$ given by the piecewise constant function
\begin{equation} \label{phi:from:x}
  \varphi (y) := \left\{ \begin{aligned}
    &\frac{1/n}{ x_i - x_{i-1} }
    &&\text{if } x_{i-1} < y < x_i \text{ for some } i \in \{1,\ldots,n\}  \\
    &0
    &&\text{otherwise.}
  \end{aligned} \right.
\end{equation} 
We denote by $\varphi^*$ the density function related to $\tx$. The choice of $\varphi$ is not unique; in Section \ref{s:intro:rems} we discuss a different choice based on Voronoi cells. The current choice in \eqref{phi:from:x} is made to ease computations, and has been used in earlier studies; see, e.g., \cite{HallChapmanOckendon10}.

Figure \ref{fig:tx} illustrates typical examples of $\tphi$ and $\orho$. Especially in the case where $D(U)$ confines $\tphi$, the graphs of $\tphi$ and $\orho$ are close to each other. This observation is in line with the literature (see, e.g.,
\cite{GarroniVanMeursPeletierScardia16,
GeersPeerlingsPeletierScardia13,
HallChapmanOckendon10,
HallHudsonVanMeurs18}). 
In this paper we wish to finally quantify this observation.

\begin{figure}[h]
\centering
\begin{tikzpicture}[scale=1.125]
\def \x {1.33}
\def \dx {.13}
\def \y {1.3}
\def \z {.07}
\begin{scope}[shift={(4.5,0)},scale=1]
  \node (label) at (\z,0){\includegraphics[height=3.6cm, trim=15mm 0mm 15mm 0mm]{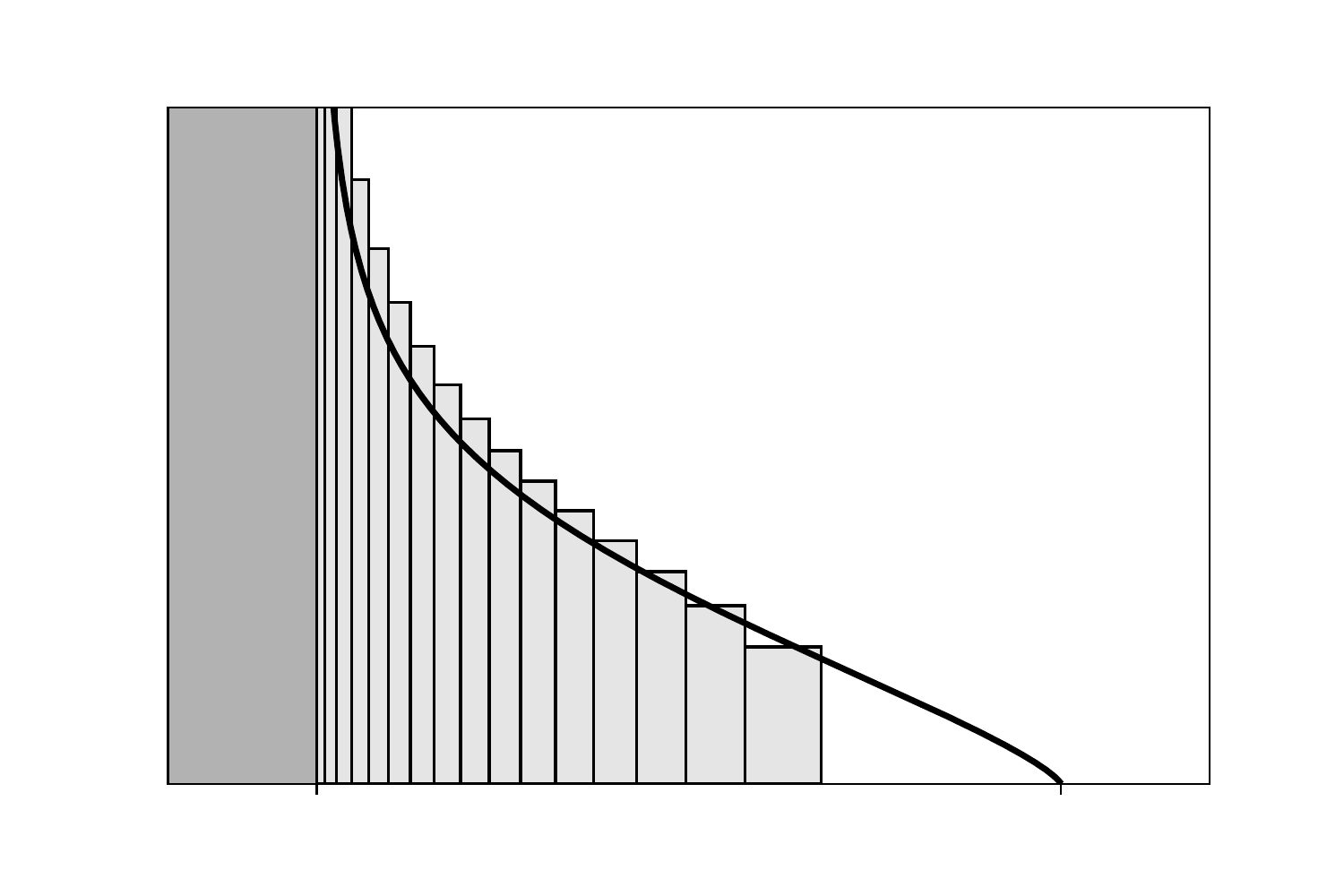}};
  \draw (-\x+\dx, -\y) node[below] {$0$};
  \draw (\x+\dx, -\y) node[below] {$1$};
  \draw (\dx,1.2) node[above] {$U(x) = \gamma_1 x$};
\end{scope}
\begin{scope}[shift={(0,0)},scale=1]
  \node (label) at (\z,0){\includegraphics[height=3.6cm, trim=15mm 0mm 15mm 0mm, clip]{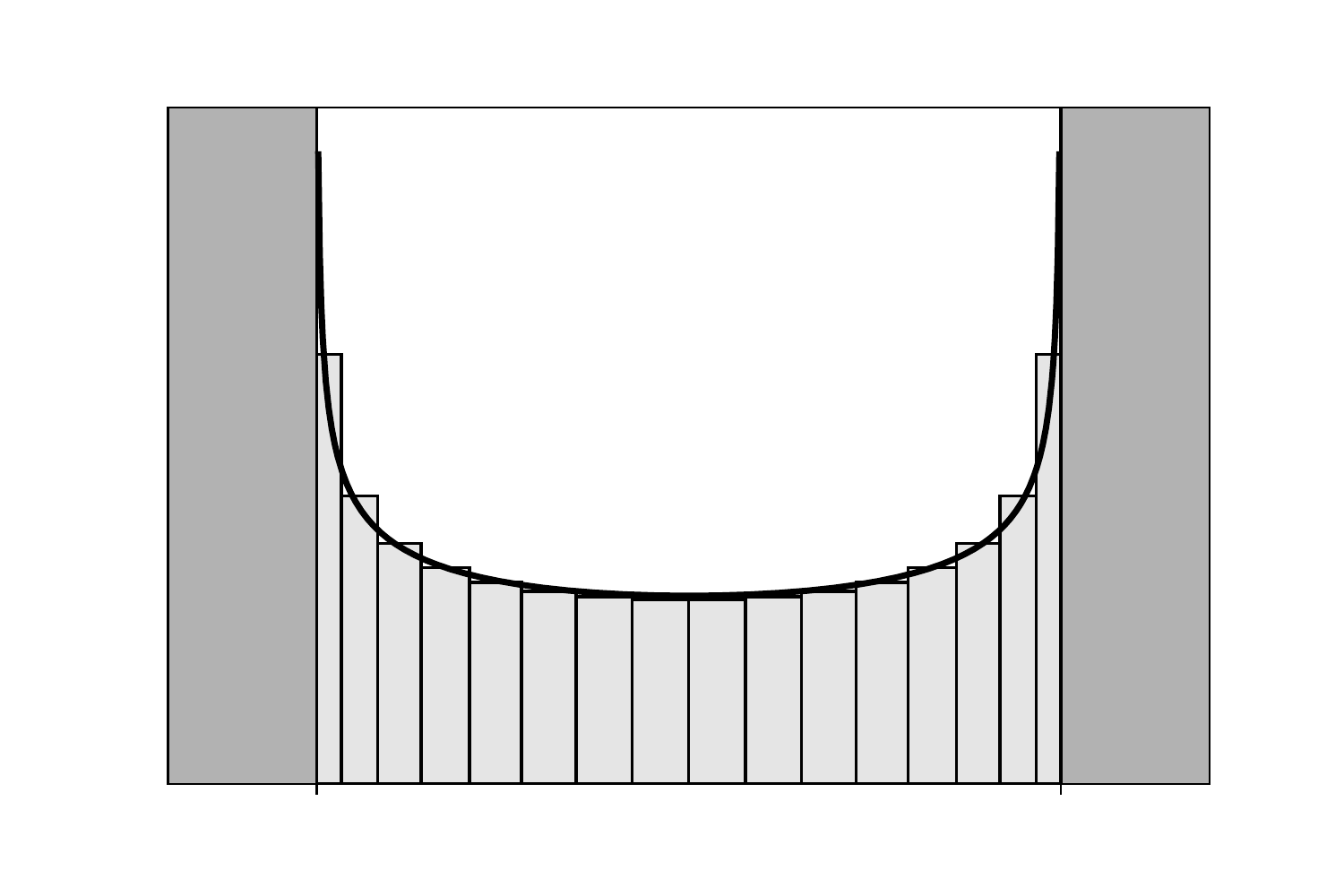}};
  \draw (-\x+\dx, -\y) node[below] {$0$};
  \draw (\x+\dx, -\y) node[below] {$1$};
  \draw (\dx,1.2) node[above] {$U(x) = 0$};
\end{scope}
\begin{scope}[shift={(9,0)},scale=1]
  \node (label) at (\z,0){\includegraphics[height=3.6cm, trim=15mm 0mm 15mm 0mm, clip]{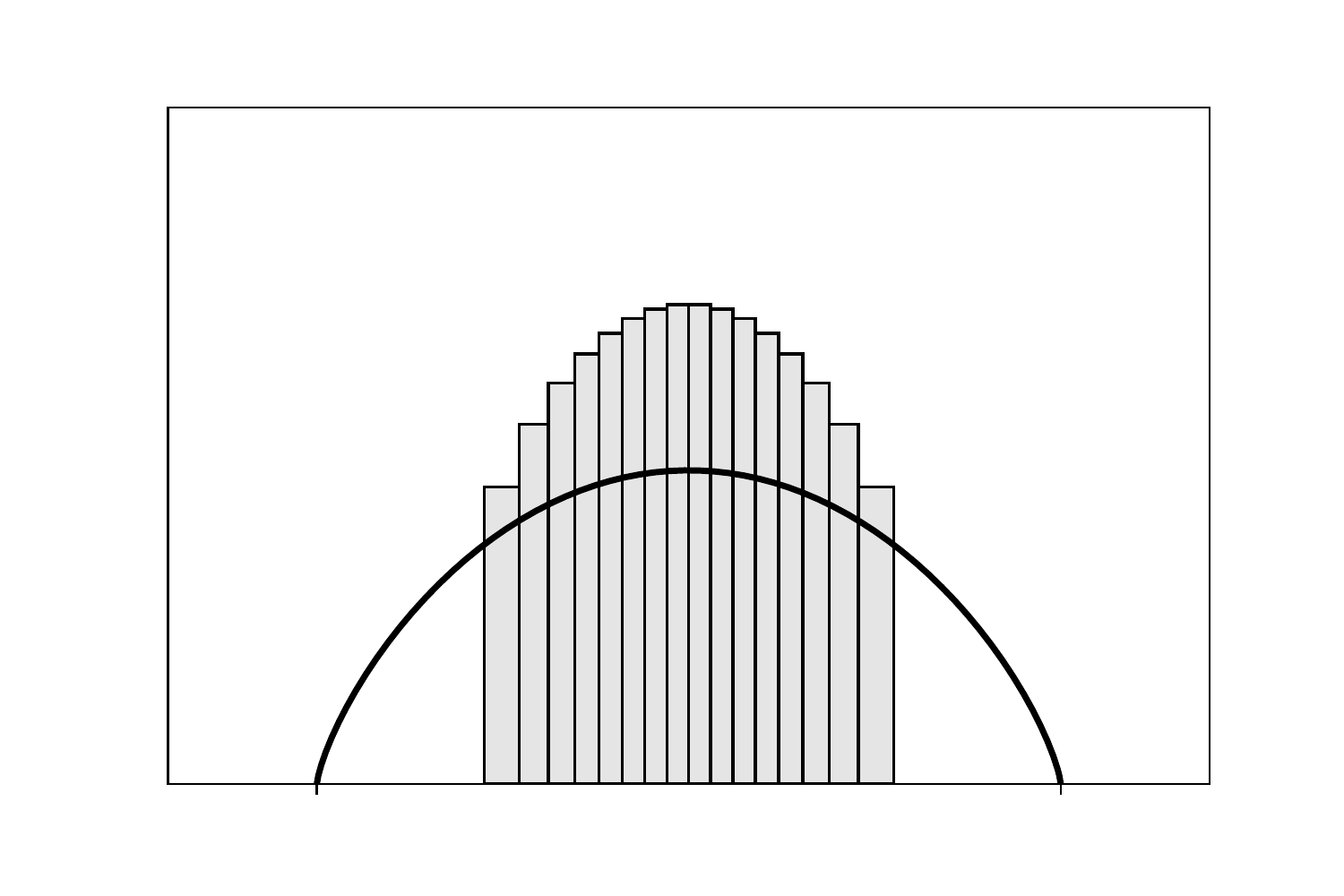}};
  \draw (-\x+\dx, -\y) node[below] {$0$};
  \draw (\x+\dx, -\y) node[below] {$1$};
  \draw (\dx,1.2) node[above] {$U(x) = \gamma_2 (x - \frac12)^2$};
\end{scope}
\end{tikzpicture} \\
\caption{Numerical computations of $\tx$ for $n = 16$, $V = V_a$ with $a = \frac12$ and three different choices of $U$. The points $x_i^*$ on the horizontal axis are indicated by the vertical edges of each light-gray rectangle (all with area $\frac1n$). The graph of $\tphi$ is given by the top edges of these rectangles. The black curve is the graph of $\orho$. The region where $U = \infty$ is indicated in gray. The values of $\gamma_i$ are chosen such that $\supp \orho = [0,1]$. The computation of $\tx$ and $\orho$ is explained in Section \ref{s:num}.}
\label{fig:tx}
\end{figure}

\subsection{Main result}
\label{s:intro:t}

We estimate $\tphi - \orho$ in terms of a fractional Sobolev norm. To introduce this norm, we define the fractional Sobolev space on $\R$ by
\begin{equation} \label{Hs}
  \begin{aligned}
    H^{-s} (\R) 
    &:= \big\{ \zeta \in \mathcal S'(\R) : {\textstyle \int_\R} (1 + \omega^2)^{-s} \big| \widehat \zeta (\omega) \big|^2 \, d\omega < \infty \big\} \\
    \| \zeta \|_{H^{-s} (\R)}^2 
    &:=  \int_\R (1 + \omega^2)^{-s} \big| \widehat \zeta (\omega) \big|^2 \, d\omega,
  \end{aligned}  
\end{equation}
where $s > 0$, $\widehat \zeta$ is the Fourier transform of $\zeta$ and $\mathcal S'(\R)$ is the space of tempered distributions, i.e., the dual of the Schwartz space $\mathcal S(\R)$. The reason for choosing this norm is that it is equivalent to an adjusted energy norm of $E$ for $s = (1-a)/2$, where we recall that $a$ is the strength of the singularity of $V$; see \eqref{Va}. In turn, this adjusted energy norm of $\tphi - \orho$ can be estimated in terms of energy differences, which are easier to analyse than $\tx$ itself. The benefit of using the fractional Sobolev norm over the adjusted energy norm is that it is independent of the interaction potential $V$. 

The main theorem of this paper is Theorem \ref{t}.

\begin{thm}[The quantitative estimate] \label{t}
Let $n \geq 1$. Let $E_n$ and $E$ be as defined in \eqref{En} and \eqref{E} with potentials $V$ and $U$ satisfying Assumption \ref{ass:VU} for some $0 \leq a < 1$. Then, the minimal values of $E_n$ and $E$ are attained, and the minimiser $\orho$ of $E$ is unique. Moreover, there exists $C > 0$ independent of $n$ such that for any minimiser $\tx$ of $E_n$,
$$ \| \tphi - \overline \rho \|_{H^{-(1-a)/2} (\R)}^2 
 \leq C \left\{ \begin{array}{ll}
  n^{-1+a} & 0 < a < 1 \\
  n^{-1} (\log n)^3 \: & a = 0,
  \end{array}  \right.$$
where $\tphi$ is constructed from $\tx$ by \eqref{phi:from:x}.
\end{thm}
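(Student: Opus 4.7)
The overall plan is to control the squared fractional Sobolev norm by an energy excess and then bound this excess by the claimed rate. That is, I would first show
$$\|\tphi - \orho\|_{H^{-(1-a)/2}(\R)}^2 \leq C\bigl(E(\tphi) - E(\orho)\bigr),$$
and then, with $\epsilon_n := n^{-(1-a)}$ (resp.\ $n^{-1}(\log n)^3$ if $a=0$),
$$E(\tphi) - E(\orho) \leq C\epsilon_n.$$
For the first bound I would use that the Fourier transform of $V_a$ behaves like $|\omega|^{-(1-a)}$ at infinity, so that for any signed measure $\zeta$ with zero total mass the quadratic form $\zeta \mapsto \int_\R \int_\R V(x-y)\,d\zeta(x)\,d\zeta(y)$ is comparable to $\|\zeta\|_{H^{-(1-a)/2}(\R)}^2$, up to a lower-order perturbation coming from $\Vreg$. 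Combined with convexity of $\Vext$, a Taylor expansion of $E$ around $\orho$ and the Euler--Lagrange relation characterising $\orho$ then yield the first inequality. Existence of minimisers of $E_n$ and of $E$, and uniqueness of $\orho$, follow from the direct method together with, respectively, the singularity of $V$ at the diagonal and the strict positivity of $\widehat{V_a}$.

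For the upper bound on $E(\tphi)-E(\orho)$, I would fix a reference configuration $\hat\bx \in \Omega$ whose gaps track $\orho$ (for instance $\hat x_i := F^{-1}(i/n)$ where $F$ is the CDF of $\orho$), and introduce a renormalised discrete energy $\widetilde E_n(\bx) := E_n(\bx) + R_n(\bx)$, where
$$R_n(\bx) := \frac12 \sum_{i=1}^n \frac{1}{n^2 (x_i - x_{i-1})^2}\int_{x_{i-1}}^{x_i}\int_{x_{i-1}}^{x_i} V(y-z)\,dy\,dz$$
exactly compensates for the within-cell self-interactions that appear in $E(\varphi_\bx)$ but are excluded from $E_n(\bx)$. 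This is the ``carefully chosen renormalisation'' mentioned in the abstract. Using convexity of $V$ on $(0,\infty)$ to compare cross-cell averages of $V$ with $V$ evaluated at particle positions, one obtains $E(\varphi_\bx) \leq \widetilde E_n(\bx) + C\epsilon_n$ for every admissible $\bx$. I would then telescope
$$E(\tphi)-E(\orho) = \bigl[E(\tphi)-\widetilde E_n(\tx)\bigr] + \bigl[\widetilde E_n(\tx) - \widetilde E_n(\hat\bx)\bigr] + \bigl[\widetilde E_n(\hat\bx) - E(\orho)\bigr].$$
The first bracket is at most $C\epsilon_n$ by construction, and the third is a quadrature error for the smooth density $\orho$, which I would estimate by standard midpoint/trapezoidal arguments using regularity of $\orho$ coming from its Euler--Lagrange equation.

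The middle bracket is where I expect the main obstacle. By minimality of $\tx$, $E_n(\tx) \leq E_n(\hat\bx)$, so the bracket is controlled by $R_n(\tx)-R_n(\hat\bx)$; however $R_n$ itself is of order $\epsilon_n$, so only an a priori bound on the gaps $x_i^*-x_{i-1}^*$ of the unknown minimiser will ensure that this difference does not destroy the rate. I would derive such a bound from the first-order optimality conditions of $E_n$, using the strict convexity of $V$ on $(0,\infty)$ and the ordering of the particles (the one-dimensional features emphasised in the abstract) to compare neighbouring gaps with $1/(n\orho(\hat x_i))$. In the $a=0$ case, the three powers of $\log n$ plausibly arise from combining (i) the weak coercivity of the $V_0$-form at low frequencies, (ii) the logarithmic behaviour of the self-interactions entering $R_n$, and (iii) the weaker gap estimates available in this regime.
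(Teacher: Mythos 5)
Your overall skeleton---bound the Sobolev norm by the energy excess $E(\tphi)-E(\orho)$, introduce a reference configuration $\hat\bx$ tracking the quantiles of $\orho$, and exploit minimality of $\tx$---matches the paper's strategy, and your first bracket and quadrature estimate correspond to what the paper proves in Lemmas \ref{l:ophi:orho:U}--\ref{lem:bd:ene:diff:DtC}. But your proposal breaks down exactly where you flag uncertainty: controlling $R_n(\tx)$ (the paper's $D_n(\tx)$, or equivalently its nearest-neighbour term $\NN(\tx)$). You propose to close this by extracting pointwise gap bounds $x_i^*-x_{i-1}^*\sim 1/(n\orho(\hat x_i))$ from the first-order optimality conditions of $E_n$. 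This is precisely the route the paper deliberately \emph{avoids}; see the remark under ``Choice of distance/norm'' in Section \ref{s:intro:rems}, where the authors explain that their energy-based approach ``circumvents any analysis of fine properties of $\tx$'' because such pointwise properties ``are more difficult to establish.'' In fact the Euler--Lagrange system for $E_n$ is a coupled singular nonlinear system of $n+1$ equations, and no known argument converts it into sharp local spacing estimates for minimisers under the weak hypotheses of Assumption \ref{ass:VU} (unbounded $\orho$, non-trivial $\Vreg$). So the middle bracket is genuinely unbounded by your scheme.

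What the paper actually does is prove a \emph{global} energy lower bound, Proposition \ref{prop:LB:En}: $E_n(\bx)-\NN(\bx)-E(\orho)\geq -Cn^{-1+a}$ for every $\bx\in\Omega$. Applied at $\bx=\tx$ and combined with $E_n(\tx)\leq E_n(\ox)$ and the upper bound on $\Tb$, this gives the needed averaged gap bound $\NN(\tx)\leq Cn^{-1+a}$ without ever touching the Euler--Lagrange equations for $\tx$. The key device for that lower bound is a piecewise-affine truncation $V_n$ of the potential near the origin (see \eqref{Vn} and Figure \ref{fig:V:puntmuts}); this $V_n$, not your self-interaction correction $R_n$, is the ``carefully chosen renormalisation'' named in the abstract. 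You have conflated two distinct objects: $R_n$ (the within-cell double integral, which the paper calls $D_n$ and which is a bookkeeping term in Lemma \ref{lem:bd:ene:diff:DtC}) versus $V_n$ (a modified kernel used to split $\|\nu_n\|_V^2$ into a sign-definite part and an explicitly estimable remainder). Without the $V_n$ regularisation and the accompanying $i$-th-neighbour convexity argument, the logarithmic and power-law estimates in Lemma \ref{l:VVnastrho} and the final display of Proposition \ref{prop:LB:En}, there is no way to bound the middle bracket, and the argument does not close.
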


The available tools for the proof of Theorem \ref{t} are the monotonicity and convexity of $V$ and the regularity properties of $\orho$ proven in \cite{KimuraVanMeurs19acc} (see Lemma \ref{l:orho} below). The difficulty is that no information on $\tx$ is available, except that $\tx$ is a minimiser of $E_n$. 
\smallskip

Next we give a sketch of the proof. For simplicity, we assume $a \in (0,1)$. The proof is divided in 3 steps. Step 1 is a preparatory step. In this step, we show that it is not restrictive to assume that 
\begin{subequations} \label{suppV:suppOrho}
\begin{gather} \label{suppV:cp}
  \supp V \text{ is compact,} \\
  \supp \orho = [0,1]. 
\end{gather} 
\end{subequations}

In Step 2, we estimate $\| \tphi - \overline \rho \|_{H^{-(1-a)/2} (\R)}^2$ without using any information on $\tx$ except that it is the minimiser of $E_n$. We start by following \cite{KimuraVanMeurs19acc} by rewriting $E$ as the sum of the square of a norm and a linear term, i.e., 
\begin{equation} \label{E:norm}
   E (\rho) = \frac12 \| \rho \|_V^2  + \int_\R U \, d\rho, \qquad 
   \| \rho \|_V^2 := \int_\R (V * \rho) \, d\rho.
\end{equation}
The norm $\| \cdot \|_V$ is the adjusted energy norm which we mentioned below \eqref{Hs}. The `adjustment' refers to the replacement of the interaction potential in Step 1 by a potential $V$ with compact support. We recall from \cite[Prop.\ 3.3]{KimuraVanMeurs19acc} that $\| \cdot \|_V$ is equivalent to the norm on $H^{-(1-a)/2} (\R)$. From \eqref{E:norm} and the minimality of $\orho$ (see Lemma \ref{l:orho}.\ref{l:orho:EL}), 
we obtain
\begin{align} \notag
  \| \tphi - \overline \rho \|_{H^{-(1-a)/2} (\R)}^2
  &\lesssim \| \tphi - \overline \rho \|_V^2 \\\notag
  &= 2 ( \overline \rho , \orho - \tphi )_V + \| \tphi \|_V^2 - \| \overline \rho \|_V^2 \\\notag
  &= 2 \int (V * \orho + U) \, d(\orho - \tphi) + 2 E (\tphi) - 2 E (\orho) \\\label{for:conv:rate:DtC:first:IDy:est}
  &\leq 0+ 2 ( E(\tphi) - E(\orho) ).
\end{align}
Then, proving Theorem \ref{t} translates into bounding the energy difference in the right-hand side.

To bound this difference, we obtain from the minimality of $\tx$ that
\begin{equation} \label{T1T2}
  E(\tphi) - E(\orho)
  = \underbrace{ E(\tphi) - E_n(\tx) }_{\Ta} 
    + \underbrace{ E_n(\tx) - E_n(\ox) }_{\leq 0}
    + \underbrace{ E_n(\ox) - E(\orho) }_{\Tb},
\end{equation}
where $\ox \in \Omega$ can be chosen freely. For an appropriate choice of $\ox$ (see \eqref{ox}), we bound $\Ta$ and $\Tb$ from above in Section \ref{s:Part2}. This is easy for the confinement term of the energy. For the interaction term, we perform a direct computation in which we write $\| \cdot \|_V^2$ explicitly as the integral over the square $(\supp \orho)^2 = [0,1]^2$, which we subdivide into the rectangles $(x_{i-1}, x_i) \times (x_{j-1}, x_j)$; see Figure \ref{fig:integration:domain:2D}. Then, by the monotonicity and convexity of $V$, we ultimately obtain 
\begin{equation} \label{T1T2:est}
  \Ta \leq \frac Cn + C' \NN (\tx)
  \quad \text{and} \quad
  \Tb \leq C n^{-1+a},
\end{equation}
where
\begin{equation} \label{NN}
     \NN (\bx) := \frac1{n^2} \sum_{i=1}^n V (x_i - x_{i-1})
\end{equation}   
is the part of the interaction term given by all nearest neighbour (superscript `nn') interactions. 

Putting the findings of Step 2 together, we obtain
\begin{equation*}
  \| \tphi - \overline \rho \|_{H^{-(1-a)/2} (\R)}^2
  \leq C n^{-1+a} + C' \NN (\tx).
\end{equation*}
Hence, it is left to show that 
\begin{equation} \label{NN:est} 
  \NN(\tx) \leq C n^{-1+a}.
\end{equation}

In Step 3 we prove \eqref{NN:est}. This is the difficult part of the proof of Theorem \ref{t}; we consider it as the main mathematical novelty of this paper. Our strategy is to establish the following lower bound on $E_n$:
\begin{equation} \label{En:LB}
  E_n(\bx) - E(\orho) \geq \NN(\bx) - C n^{-1+a} \quad \text{for all } \bx \in \Omega;   
\end{equation}
see Proposition \ref{prop:LB:En}. Then, taking $\bx = \tx$, the right-hand side in \eqref{En:LB} is bounded from above by $\Tb$. By the bound on $\Tb$ in \eqref{T1T2:est}, we then conclude \eqref{NN:est}. 

The argument in Step 3 is inspired by \cite[Sec.\ 2]{PetracheSerfaty14}; we also construct a renormalisation of the norm $\| \cdot \|_V$, but we need to construct a different one to allow for $\Vreg \neq 0$ and unbounded $\orho$.

We conclude the sketch of the proof by remarking that the case $a = 0$ can be treated analogously; the only difference is that the factor $\log n$ appears at a few places in the estimates. 
\smallskip

\subsection{Remarks on Theorem \ref{t}}
\label{s:intro:rems}

Here we list several remarks on the statement and the sketch of the proof of Theorem \ref{t}:

\paragraph{Uniform bound on support of $\tphi$} The proof of Step 1 contains the additional result that $\supp \tphi$ is bounded uniformly in $n$; see Proposition \ref{prop:VRn}. While the proof consists of common arguments in potential theory, we believe that the statement of Proposition \ref{prop:VRn} has merit on its own due to the rather weak assumptions on $V$ and $U$.

\paragraph{Extension of \cite{KimuraVanMeurs19acc}} The statement of Step 1 has further merit; the main results in \cite{KimuraVanMeurs19acc} (on the regularity of $\orho$) are stated under the stronger assumption that the tails of $V$ are integrable, in which case $V \in L^1(\R)$. Here, in Step 1 we extend these results to the larger class of potentials $V$ specified by Assumption \ref{ass:VU}.

\paragraph{Sharpness of the exponent} To test the degree of sharpness of the exponent of $n$ in the estimate in Theorem \ref{t}, we perform in Section \ref{s:num} numerical computations to compute $\| \tphi - \overline \rho \|_{H^{-(1-a)/2} (\R)}^2$ for various values of $n$. Surprisingly, our findings in Section \ref{s:num} show that in all test cases the numerical values of $\| \tphi - \overline \rho \|_{H^{-(1-a)/2} (\R)}^2$ decrease roughly as a power law $n^{-p}$ with $p$ significantly \emph{larger} than the exponent $1-a$ in Theorem \ref{t}. Hence, the exponent $1-a$ in Theorem \ref{t} appears not to be sharp. We comment in Section \ref{s:intro:disc} on the loss of sharpness in the proof of Theorem \ref{t} in the special case of $\Vreg \equiv 0$ (Riesz gases), for which more precise estimates than that in Theorem \ref{t} are available.

\paragraph{Choice of distance/norm} As mentioned before, our choice of norm circumvents any analysis of fine properties of $\tx$, and requires instead in the proof of Theorem \ref{t} the lower bound on the energy $E_n$ in \eqref{En:LB}. Other choices, such as $L^p$-norms or the Wasserstein distance, appear to require certain properties of $\tx$, which makes it more difficult to establish a quantitative estimate in these norms.

Other than mathematical convenience, our choice of norm has a further merit; it provides a quantitative estimate for the particle interaction force on $\R$ induced by the particle densities $\tphi$ and $\orho$. These interaction forces are 
\begin{equation*}
   -(V * \tphi)' - U'
   \quad \text{and} \quad
   -(V * \orho)' - U'
\end{equation*} 
respectively. To obtain a quantitative estimate from Theorem \ref{t}, we first change $V$ to have property \eqref{suppV:cp}\footnote{Changing $V$ changes the interaction forces, but only on a domain which is a certain distance away from $\supp \orho$ and $\supp \tphi$.}. Then, by \cite[Lem.\ 3.1(iii)]{KimuraVanMeurs19acc} it holds that 
\begin{equation*}
  \exists \, C > 0 \ \forall \, \omega \in \R :  \widehat V (\omega) 
  \leq C(1 + \omega^2)^{ -\tfrac{1-a}2}.
\end{equation*}
Hence, writing $\nu := \tphi - \orho$ and $s := \frac{1-a}2$,
\begin{equation*}
  \| \nu \|_{H^{-s} (\R)}^2
  = \int_\R (1 + \omega^2)^{-s} \big| \widehat \nu (\omega) \big|^2 \, d\omega
  \geq \frac1{C^2} \int_\R (1 + \omega^2)^{s} \big| \widehat V (\omega) \, \widehat \nu (\omega) \big|^2 \, d\omega
  = \frac1{C^2} \| V * \nu \|_{H^{s} (\R)}^2.
\end{equation*}
Thus,
\begin{equation*}
  \big\| (V * \tphi)' - (V * \orho)' \big\|_{H^{-(1+a)/2} (\R)}^2
  \leq C \big\| V * (\tphi - \orho) \big\|_{H^{(1-a)/2} (\R)}^2
  \leq C' \| \tphi - \orho \|_{H^{-(1-a)/2} (\R)}^2,
\end{equation*}
for which Theorem \ref{t} gives an upper bound.

\paragraph{Other choice of $\varphi$} Another commonly used choice for $\varphi$ than that in \eqref{phi:from:x} is to construct it from a one-dimensional Voronoi tessellation of the points $x_i$, and to assign to each Voronoi cell a mass of $1/n$. While Voronoi cells easily extend to higher dimensions, even in one dimension they introduce two complications for proving the corresponding estimates in \eqref{T1T2:est}. First, an additional choice for the Voronoi cells for $x_0$ and $x_n$ has to be made. Second, the mass of $\orho$ on the Voronoi cells constructed from $\ox$ may not equal $1/n$, which induces further error terms.

\subsection{Position in the literature}
\label{s:intro:disc} 

Here we put Theorem \ref{t} in the context of the literature. In particular, we show how it applies to problems in plasticity and in numerical integration, and how it compares to recent advances on Riesz gases.

\paragraph{Plasticity} The paper series started by 
\cite{GeersPeerlingsPeletierScardia13,
Hall11}
and continued in 
\cite{GarroniVanMeursPeletierScardia16,
HallHudsonVanMeurs18,
vanMeurs18Proc,
vanMeurs18,
VanMeursMunteanPeletier14}
studies the connection between models for plasticity of metals and an underlying microscopic model in a one-dimensional setting. This microscopic model is a minimisation problem of a certain $E_n$ of the form \eqref{En}. In particular, the interaction potential is
\begin{equation*}
  V(x) = x \coth x - \log (2 |\sinh x|),
\end{equation*}
which fits to Assumption \ref{ass:VU} with $a = 0$ and $\Vreg \not \equiv 0$. While in this paper series the convergence of $\tphi$ to $\orho$ as $n \to \infty$ is established, no quantitative estimates between $\tphi$ and $\orho$ were found\footnote{An exception is \cite{vanMeurs18Proc}, which establishes a quantitative estimate for a special, $n$-dependent choice of $U$. }, which limits the application to plasticity. 

The main result of this paper, Theorem \ref{t}, provides the first quantitative estimate for this microscopic model. The estimate is given by
\begin{equation} \label{appl:A1}
   \| \tphi - \orho \|_{H^{-1/2}}^2 
 \leq C(U) \frac{(\log n)^3}n.
\end{equation} 
For the application, a more detailed dependence of the constant $C$ on $U$ is needed. Since the proof of Theorem \ref{t} is constructive, it may be possible to use its steps for constructing an explicit expression for $C(U)$.

Our aim to establish quantitative estimates fits within a recent trend in mathematical research on plasticity. In this trend (see, e.g., \cite{HudsonVanMeursPeletier20ArXiv}), the starting point is a microscopic particle system which depends on several parameters such as $n$, the temperature and the lattice spacing between the atoms of the metal. The goal is to identify regions in the space of parameters on which the microscopic system can be approximated by a macroscopic system described in terms of a particle density $\rho$. To quantify this approximation, quantitative estimates such as that in Theorem \ref{t} are required. The main contribution from Theorem \ref{t} is that it requires no regularisation of the singularity of $V$ at $0$.

\paragraph{Approximation of functions} A common problem in the field of approximating a given function is how to choose finitely many sampling points at which to evaluate the function. For analytic functions defined on subsets of the complex plane, it is shown in \cite{TanakaSugihara19,
HayakawaTanaka19} that the optimal choice of sampling points can be obtained from the minimiser $\tx$ of $E_n$ for certain potentials $V$ and $U$ which satisfy Assumption \ref{ass:VU}. In particular, the interaction potential is explicitly given by $V(x) = -\log |\tanh x|$, which satisfies Assumption \ref{ass:VU} with $a = 0$ and $\Vreg \not \equiv 0$. Moreover, to bound the error made when replacing the given function by the approximation from the sampling points, it is required to find upper and lower bounds on
\begin{equation} \label{HT19}
  E_n(\tx) - E(\orho).
\end{equation}
The currently available bounds on \eqref{HT19} (see \cite[Thm.\ 2.3]{HayakawaTanaka19}) are comparable in size to $E_n(\tx)$ itself. Applying \eqref{En:LB} and the estimate on $T_2$ in \eqref{T1T2:est} yields
\begin{equation*} 
   \big| E_n(\tx) - E(\orho) \big| 
 \leq C(U) \frac{(\log n)^3}n,
\end{equation*} 
which demonstrates that it may be possible to construct a sharper estimate. In this setting, $U$ depends on $n$, and thus (similar to the application to plasticity) a more detailed estimate on $C(U)$ is required.

\paragraph{Riesz gases} For several other applications in approximation theory, detailed properties of the minimiser $\tx$ of $E_n$ are desired in the case where $\Vreg \equiv 0$ and where the particle positions may be of any dimension. Establishing such properties is the main topic of the paper series by Petrache, Sandier, Serfaty \emph{et al}.\ 
(\cite{SandierSerfaty152D,
SandierSerfaty15,
PetracheSerfaty14} 
to list a few). In particular, it is found in \cite[Thm.\ 4]{PetracheSerfaty14} that 
\begin{equation} \label{EnE:PS15}
   E_n(\tx) - E(\orho) = n^{-1+a}(-M + o(1))
   \qquad \text{as } n \to \infty
\end{equation} 
under appropriate restrictions on the potential $U$. The constant $M > 0$ 
 is of particular importance. It is explicitly expressed in terms of a maximisation problem on the microscopic configuration of $\tx$ in the limit $n \to \infty$. 

For the comparison between \eqref{EnE:PS15} and Theorem \ref{t}, it suffices to consider $M$ as a given constant. Even when \eqref{EnE:PS15} is restricted to one dimension, it is a more precise result than our estimates in \eqref{T1T2:est} and \eqref{En:LB}. The reason for obtaining such a precise result is that for $V = V_a$ the extension representation of \cite{CaffarelliSalsaSilvestre08} can be used (see \cite{PetracheSerfaty14} for details). For our larger class of potential $V$, we are not aware of a similar extension representation. Moreover, in this paper, $\orho$ need not be bounded, which complicates the estimates (see, e.g., Remark \ref{r:prop:LB:En} below).

Next, we put together the conclusion from \eqref{EnE:PS15} that $E_n(\tx) - E(\orho)$ decays as $n^{-1+a}$ and the finding from the numerical computations in Section \ref{s:num} that $\| \tphi - \overline \rho \|_V^2$ decays faster, namely as $n^{-p}$ with $p \geq -1+a$. First, we note that our numerical solutions for $\tx$ satisfy $x_i^* \in \supp \orho$ for all $i = 1,\ldots,n$ for each tested value of $n$. This implies that $\supp \tphi \subset \supp \orho$, and thus by Lemma \ref{l:orho}\ref{l:orho:EL} the second inequality in \eqref{for:conv:rate:DtC:first:IDy:est} turns into an equality. Substituting this equality in \eqref{T1T2}, we obtain
\begin{equation} \label{T1T2:Riesz} 
  \underbrace{ \| \tphi - \overline \rho \|_V^2 }_{O(n^{-p})}
  = \underbrace{ E(\tphi) - E_n(\tx) }_{\Ta} 
    + \underbrace{ E_n(\tx) - E(\orho) }_{n^{-1+a}(-M + o(1))}.
\end{equation}

From \eqref{T1T2:Riesz} it is clear that the loss of sharpness in the exponent of $n$ in the estimate in Theorem \ref{t} originates from the choice to estimate the energy differences in the right-hand side of \eqref{T1T2} without trying to optimise the multiplicative constant in front of $n^{-1+a}$. Finding this optimal multiplicative constant could result in a sharper estimate than that in Theorem \ref{t}. However, it seems quite difficult to optimise this constant. At least in the case of Riesz gases in \cite{PetracheSerfaty14} where \eqref{EnE:PS15} holds, a sharper estimate than that in Theorem \ref{t} follows from the right-hand side of \eqref{T1T2:Riesz} if one can show that
\begin{equation*}
  \Ta = n^{-1+a}(M + o(1)).
\end{equation*}
However, our current estimate $\Ta \leq C n^{-1+a}$ obtained from the strategy outlined in \eqref{T1T2:est} and \eqref{En:LB} is the core of our proof, and we have as of yet no clue on how the corresponding constant $C$ can be characterised as the constant $M$.

\paragraph{Organisation of the paper}  In Section \ref{s:not} we list our notation. In Sections \ref{s:Part1}, \ref{s:Part2} and \ref{s:Part3} we establish the required tools to justify the steps in the sketch of the proof of Theorem \ref{t}. In Section \ref{s:a0} we put these tools together to prove Theorem \ref{t}. In Section \ref{s:num} we describe and discuss our numerical findings for the actual dependence of the left-hand side in Theorem \ref{t} on $n$ and $a$.

\section{Notation}
\label{s:not}

The following table list the symbols which we use throughout the paper.

\newcommand{\specialcell}[2][c]{\begin{tabular}[#1]{@{}l@{}}#2\end{tabular}}
\begin{longtable}{lll}
$\wedge$, $\vee$ & $\alpha \wedge \beta := \min \{\alpha, \beta\}$ and $\alpha \vee \beta := \max \{\alpha, \beta\}$ & \\
$( \cdot, \cdot )_V$ & inner product constructed from $V$; $(f, g)_V = \int (V * g) f$ & \eqref{Vip}, \eqref{HV} \\
$\indicatornoacc A(x)$ & indicator function; $\indicatornoacc A(x) = 1$ if $x \in A$, and $\indicatornoacc A(x) = 0$ otherwise & \\
$a$ & strength of the singularity of $V$; $0 \leq a < 1$ & \eqref{Va} \\
$C, C', \ldots$ & some $n$-independent constants & \\
$E$ & energy for the particle density & \eqref{E} \\
$E_n$ & interacting particle energy; $E_n : \Omega \to \R$ & \eqref{En} \\
$\NN$ & nearest neighbour interactions; part of $E_n$ & \eqref{NN} \\
$\varphi$ & discrete density (piece-wise constant) constructed from $\bx \in \Omega$ & \eqref{phi:from:x} \\
$\Gamma$ & $\Gamma$-function; $\Gamma(\alpha) := \int_0^\infty x^{\alpha-1} e^{-x} \, dx$ & \\
$H^{-s} (\R)$ & fractional Sobolev space for $s > 0$ & \eqref{Hs} \\
$\ell_i$ & distance between nearest neighbours in $\bx \in \Omega$; $\ell_i := x_i - x_{i-1}$ & \eqref{m:ell} \\
$m_i$ & midpoints of nearest neighbours in $\bx \in \Omega$; $m_i := \frac12 (x_i + x_{i-1})$ & \eqref{m:ell} \\
$n$ & $n+1$ is the number of particles; $n \geq 1$ & \\
$\Omega$ & space of admissible particle configurations; $\Omega \subset \R^{n+1}$ & \eqref{Omega} \\
$\mathcal P (\R)$ & space of probability measures on $\R$ & \\
$\orho$ & the minimiser of $E$ & Lem.\ \ref{l:orho} \\
$U$ & confining potential & Ass.\ \ref{ass:VU} \\
$V$ & interaction potential & Ass.\ \ref{ass:VU} \\
$V_a$ & singular, homogeneous part of $V$ & \eqref{Va} \\
$\Vreg$ & regular part of $V$; $\Vreg = V - V_a$ & \\
$\tx$ & a minimiser of $E_n$; $\tx \in \Omega$ & \\
$\ox$ & particle configuration constructed from $\orho$; $\ox \in \Omega$ & \eqref{ox} \\
\end{longtable}

We use the convention that constants denoted by $C$ are independent of $n$ and may change from line to line. In several cases where the estimates are easier to follow when the change in constants is highlighted, we use $C', C'', \dots$ instead.

\section{Changing the tails of $V$}
\label{s:Part1} 

In this section we prove the key Propositions \ref{prop:VR} and \ref{prop:VRn} which will allow us to justify rigorously Step 1 of the sketch of the proof of Theorem \ref{t}, i.e., that $V$ may be replaced by a different interaction potential which satisfies \eqref{suppV:cp}. These propositions state that the set of minimisers of $E_n$ and the set of minimisers of $E$ do not depend on the tails of $V$. Since we consider these propositions of independent interest, we pose them under weaker conditions of $V$ than Assumption \ref{ass:VU}. In addition to proving these propositions, we show that $E_n$ and $E$ attain their minimal values, that the minimiser $\orho$ of $E$ is unique, and that $\orho$ satisfies  several properties.

\paragraph{Minimisers are independent of the tails of $V$} Let $V \in L_\loc^1(\R)$ satisfy \eqref{V:props} and $U$ satisfy Assumption \ref{ass:VU}. By an affine change of variables, we may assume that $[-1,1] \subset D(U)$. By \eqref{U:ass}, there exists a constant $M > 0$ such that 
\begin{equation} \label{U:LB}
  U(x) \geq \frac{|x|}M - M
  \quad \text{for all } x \in \R.
\end{equation}
By \eqref{V:props}, we note that, on $(0,\infty)$, $V$ is non-increasing and $V'$ is non-decreasing. Furthermore, $V'(x) \to 0$ as $x \to \infty$. Hence, there exists a point of differentiability $R \geq 2$ of $V$ for which
\begin{equation} \label{V:R}
  V'(R) \geq - \frac1{4M}
  \quad \text{and} \quad 
  \frac{V(R)}R \geq - \frac1{4M}.
\end{equation}

\begin{prop} \label{prop:VR}
Let $V \in L_\loc^1(\R)$ satisfy \eqref{V:props} and $U$ satisfy Assumption \ref{ass:VU}. Take $M, R > 0$ as in \eqref{U:LB} and \eqref{V:R}. Then, there exists a constant $S > 0$ independent of $V |_{(R, \infty)}$ such that any minimiser $\orho$ of $E$ satisfies $\supp \orho \subset [-S, S]$.
\end{prop}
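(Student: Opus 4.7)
The plan is to invoke the Euler-Lagrange characterization of $\orho$ and combine it with the quantitative choices in \eqref{V:R} and the linear lower bound \eqref{U:LB} on $U$ to force every $x \in \supp \orho$ into a bounded interval. Intuitively, for $|x|$ large, $U(x) \geq |x|/M - M$ dominates the (slow) decay of $(V*\orho)(x)$ --- whose slope on $(R,\infty)$ is controlled by $V'(R) \geq -1/(4M)$ --- so the potential $u := V*\orho + U$ cannot remain at its minimal value $c$ arbitrarily far out.

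The first step is to derive $n$-independent a priori bounds depending only on $V|_{[-R,R]}$ and $U$. Testing against the uniform measure $\rho_0 := \tfrac12 \indicatornoacc{[-1,1]}(x)\,dx$, for which $E(\rho_0) < \infty$ by local integrability of $V$, yields $E(\orho) \leq E(\rho_0) =: C_0$ with $C_0$ involving only $V|_{[-2,2]}$ and $U|_{[-1,1]}$. Since $V$ is convex on $(0,\infty)$, even, and $V(x)/x \to 0$, one has $V' \leq 0$ on $(0,\infty)$, and the quantitative bound $V'(R) \geq -1/(4M)$ then yields the uniform lower bound $V(\xi) \geq V(R) - |\xi|/(4M)$ on $\R$. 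Combining with the consequence $|x| \leq M(U(x)+M)$ of \eqref{U:LB} and substituting into $2E(\orho) = \iint V\,d(\orho \otimes \orho) + 2\int U\,d\orho$, rearrangement produces
\[ \int U\,d\orho \leq C_2 := \tfrac13\bigl(4C_0 - 2V(R) + M\bigr), \qquad \int |x|\,d\orho \leq C_1 := MC_2 + M^2, \]
with $C_1, C_2$ independent of $V|_{(R,\infty)}$.

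Next I would invoke the Euler-Lagrange condition. Standard potential-theoretic arguments (convexity of $E$ on $\cP(\R)$, first variation along mass-preserving competitors) produce $c \in \R$ such that $u \geq c$ on $\R$ and $u = c$ for $\orho$-a.e.\ $x$. Lower semicontinuity of $V$ is inherited by $V*\orho$ and hence by $u$; together with the density of $\{u=c\}$ in $\supp \orho$ this upgrades the equality to a pointwise identity on $\supp \orho$. Integrating $u = c$ against $\orho$ gives $c = 2E(\orho) - \int U\,d\orho \leq 2C_0$. Then for any $x \in \supp \orho$ with $x > R$, the pointwise lower bound on $V$ and the first-moment estimate give $(V*\orho)(x) \geq V(R) - x/(4M) - C_1/(4M)$, so combining with $U(x) \geq x/M - M$ yields $c \geq V(R) + 3x/(4M) - C_1/(4M) - M$, which solves to an explicit upper bound $x \leq S$ in terms of $C_0$, $C_1$, $V(R)$ and $M$ only. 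The case $x < -R$ is identical after reflection, using evenness of $V$ and the symmetric form of \eqref{U:LB}.

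The main obstacle I foresee is the rigorous derivation of the Euler-Lagrange condition as a pointwise identity on $\supp \orho$: the singularity of $V$ at $0$ requires care when computing the first variation of $E$ along mass-preserving perturbations to ensure it is finite, and then the lower-semicontinuity argument promoting the $\orho$-a.e.\ identity to the full support must handle the points where $V*\orho$ may be $+\infty$. Existence of $\orho$ and the classical derivation of the E-L equations can be handled separately via direct-method and strict-convexity arguments.
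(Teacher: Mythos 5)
Your proposal takes a genuinely different route from the paper. The paper's proof is a direct comparison argument that avoids the Euler--Lagrange machinery altogether: it establishes the pointwise lower bound $\tfrac12(V(x-y) + U(x) + U(y)) \geq E(\rho_0) + 1$ whenever $|x| \vee |y| \geq S$, and then reaches a contradiction by comparing $E(\orho)$ against the restricted and renormalised competitor $\rho := \orho|_{[-S,S]} / \orho([-S,S])$, using only the minimality of $\orho$. No first-variation argument, no regularity of $\orho$, and no positivity of the bilinear form $(\cdot,\cdot)_V$ are needed. Your route instead derives a priori first-moment bounds on $\orho$ (correct, and close in spirit to the paper's estimate \eqref{V:N}) and then combines them with the Euler--Lagrange identity $V*\orho + U = c$ on $\supp\orho$.

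The gap you flag at the end of your proposal is a real one, and it is more serious than the phrasing suggests. In the paper's logical structure, the Euler--Lagrange conditions (Lemma~\ref{l:orho}\ref{l:orho:EL}) are only established under the additional hypothesis \eqref{suppV:cp} that $V$ is compactly supported; Proposition~\ref{prop:VR} is exactly the tool that licenses passing to that case. Invoking the Euler--Lagrange identity inside the proof of Proposition~\ref{prop:VR} would therefore be circular unless you re-derive it from scratch for $V$ satisfying only \eqref{V:props}. In that generality there are obstructions beyond the singularity at $0$ that you mention: \eqref{V:props} permits $V(x) \to -\infty$ as $|x|\to\infty$ (e.g.\ $V = -\log|x|$), so $V*\orho$ can take the value $-\infty$ away from $\supp\orho$, the quadratic form $\iint V(x-y)\,d\mu\,d\mu$ need not be positive on signed measures without a cancellation or compact-support condition, and uniqueness/strict convexity of $E$ (which you want for the first-variation argument) is not available. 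Filling this in would amount to re-proving a significant part of what \cite{KimuraVanMeurs19acc} does under compact support, which is precisely what the paper's elementary restriction-and-renormalise argument is designed to avoid. Your quantitative estimates on $\int U\,d\orho$ and $\int|x|\,d\orho$ are sound and genuinely parallel to the paper's estimate \eqref{V:N}; the weak link is the appeal to the Euler--Lagrange characterisation, which is not available at this stage of the argument.
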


\begin{proof}
We start by proving two auxiliary estimates. The first one is given by
\begin{equation} \label{V:N}
  \inf_{x > 0} \Big( V(x) + \frac x{4M} \Big)
  \geq \min_{0 < x \leq R} \Big( V(x) + \frac x{4M} \Big) \wedge 0 =: -N.
\end{equation}
To prove it, let $x > R$. Since $V$ is convex, the tangent line of $V$ at $R$ is below the graph of $V$. Then, by \eqref{V:R}, we obtain
\begin{align*}
  V(x) + \frac x{4M}
  \geq V(R) + (x - R)V'(R) + \frac x{4M}
  \geq -\frac R{4M} - \frac{x - R}{4M} + \frac x{4M}
  = 0.
\end{align*}
This proves \eqref{V:N}. We note that the constant $N \geq 0$ does not depend on $V |_{(R, \infty)}$. We set 
\begin{equation*}
  S := 2M \big(2 E ( \rho_0 ) + 2 + 2M + N \big),
\end{equation*}
where $\rho_0 := \frac12 \indicatornoacc{[-1,1]}$ is chosen rather arbitrarily to obtain that the value of $E ( \rho_0 )$ is finite and independent of $V |_{(R, \infty)}$.

The second auxiliary estimate is given by
\begin{equation} \label{VUU:LB}
  \frac{ V(x-y) + U(x) + U(y) }2
  \geq E ( \rho_0 ) + 1
  \quad \text{for all } x, y \text{ such that } |x| \vee |y| \geq S.
\end{equation}
To prove it, we first note from \eqref{U:LB} that
\begin{equation*}
  \frac{U(x) + U(y)}2
  \geq \frac{|x| + |y|}{2M} - M
  \geq \frac S{2M} - M 
  = 2 E ( \rho_0 ) + 2 + M + N.
\end{equation*}
Then, using \eqref{V:N},
\begin{equation} \label{VUU:2}
  V(x-y) + \frac{U(x) + U(y)}2
  \geq V(x-y) + \frac{|x-y|}{2M} - M
  \geq - N - M.
\end{equation}
Adding the above two estimates, we obtain \eqref{VUU:LB}.

Next, we prove Proposition \ref{prop:VR}. Suppose that $\orho$ is a minimiser of $E$ such that $\supp \orho \not \subset [-S, S]$. Then, $\overline m := \orho ([-S, S] ) < 1$. First, we claim that $\overline m > 0$. Indeed, if not, then by \eqref{VUU:LB}
\begin{align*}
  E(\orho)
  = \int_{\R} \int_{\R} \frac{ V(x-y) + U(x) + U(y) }2 \, d\orho(y) \, d\orho(x)
  \geq E ( \rho_0 ) + 1,
\end{align*}
which contradicts with the minimality of $\orho$. Using that $\overline m > 0$, we set $\rho := \orho |_{[-S, S]} / \overline m \in \cP(\R)$, and rely on \eqref{VUU:LB} to estimate
\begin{align*}
  E(\orho)
  &= \iint_{\R^2 \setminus [-S,S]^2} \frac{ V(x-y) + U(x) + U(y) }2 \, d\orho(y) \, d\orho(x) \\
  &\qquad  + \iint_{[-S,S]^2} \frac{ V(x-y) + U(x) + U(y) }2 \, \overline m^2 \, d\rho(y) \, d\rho(x) \\
  &\geq \big( E ( \rho_0 ) + 1 \big) (1 - \overline m^2) + \overline m^2 E(\rho)
  \geq \big( E ( \orho ) + 1 \big) (1 - \overline m^2) + \overline m^2 E(\rho).
\end{align*}
Rearranging terms,
\begin{equation*}
  E(\orho) \geq E(\rho) + \frac{1 - \overline m^2}{\overline m^2 } > E(\rho),
\end{equation*}
which contradicts with the minimality of $\orho$.
\end{proof} 

\begin{prop} \label{prop:VRn}
Let $V, U, M, R$ be as in Proposition \ref{prop:VR}, and let $n \geq 1$. Then, there exists a constant $S > 0$ independent of $n$ and of $V |_{(R, \infty)}$ such that any minimiser $\tx \in \Omega$ of $E_n$ satisfies $x_i^* \in [-S, S]$ for all $i = 0, \dots, n$.
\end{prop}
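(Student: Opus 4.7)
My strategy is to mirror the proof of Proposition \ref{prop:VR}, adapted to the discrete setting. The starting point is a symmetric rewriting of $E_n$ over pairs: since each particle $x_i$ appears in exactly $n$ of the $\binom{n+1}{2}$ unordered pairs, distributing the confinement term $\frac{1}{n}\sum_i U(x_i)$ across pairs yields
$$
E_n(\bx) \;=\; \frac{1}{n^2} \sum_{0 \leq i < j \leq n} \bigl[V(x_i - x_j) + U(x_i) + U(x_j)\bigr].
$$
This is the discrete counterpart of the double-integral form used in the continuous case, and it allows the pairwise bounds \eqref{V:N}, \eqref{VUU:2}, \eqref{VUU:LB} to be applied term by term. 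I would then produce an $n$-independent upper bound on $\min E_n$: taking $\bx_0 \in \Omega$ to be $n+1$ equally-spaced points in $[-1,1]$, a direct Riemann-sum-type estimate using $V = V_a + \Vreg$ and the local integrability of $V_a$ gives $E_n(\bx_0) \leq B$ for some $B$ depending only on $V|_{[-2,2]}$ and $\max_{[-1,1]}U$, hence independent of $n$ and of $V|_{(R,\infty)}$. By minimality, $E_n(\tx) \leq B$ for every minimiser.

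Assume for contradiction that some $x_k^* \notin [-S,S]$, where $S$ will be chosen. As in Proposition \ref{prop:VR}, enlarging $S$ allows us to strengthen \eqref{VUU:LB} to $\tfrac{V(x-y)+U(x)+U(y)}{2} \geq B_1$ for $|x|\vee|y| \geq S$, with $B_1$ an arbitrary prescribed constant (I would choose $B_1 > B + N + M + 1$). Writing $K := \#\{i : x_i^* \notin [-S,S]\}$ and splitting the pairwise sum into "bad" pairs (at least one index outside $[-S,S]$) and "good" pairs, and applying the bound $B_1$ to bad pairs and the universal bound $-\tfrac{N+M}{2}$ from \eqref{VUU:2} to good pairs, I would obtain
$$
E_n(\tx) \;\geq\; \frac{2 B_1 \bigl[\binom{n+1}{2} - \binom{n+1-K}{2}\bigr]}{n^2} - \frac{(N+M)(n+1-K)(n-K)}{2n^2}.
$$
For $K \geq c(n+1)$ with some fixed $c \in (0,1)$, the right-hand side exceeds $B$, contradicting $E_n(\tx) \leq B$.

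The main obstacle is the complementary regime $K < c(n+1)$, where the outlier contribution is only $O(K/n)$ and cannot be compared against $B$ directly. I plan to handle this via first-order optimality. Shifting the topmost $K_+ := \#\{i : x_i^* > S\}$ particles simultaneously leftward by $\delta$ is admissible for small $\delta>0$; vanishing of $\partial_\delta E_n$ at $\delta=0$ yields
$$
\sum_{k = n-K_++1}^{n} U'(x_k^*) \;=\; \frac{1}{n}\sum_{\substack{k \geq n-K_++1 \\ j < n-K_++1}} |V'(x_k^* - x_j^*)|.
$$
Provided the gap $x_{n-K_++1}^* - x_{n-K_+}^*$ is at least $R$, the monotonicity of $|V'|$ on $(0,\infty)$ and \eqref{V:R} bound the right-hand side by $K_+/(4M)$, and convexity of $U$ forces $U'(x_{n-K_++1}^*) \leq 1/(4M)$. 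Since $\lim_{x \to \infty} U(x)/x \geq 1/M > 1/(4M)$ by \eqref{U:LB}, convexity of $U$ ensures $U'(x) > 1/(4M)$ for all $x$ beyond some threshold $S_U$ depending only on $U$, contradicting $x_{n-K_++1}^* > S$ for $S$ chosen beyond $S_U$. The degenerate sub-case where no gap of size $\geq R$ exists just below the outliers I plan to rule out by a further enlargement of $S$, combined with the a priori minimum-spacing estimate coming from $\NN(\tx) \leq E_n(\tx) + C \leq B + C$ and the bound $\sum_i U(x_i^*) \leq n(B + N + M)$ extracted from $E_n(\tx) \leq B$. A symmetric argument handles particles below $-S$.
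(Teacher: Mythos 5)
Your proposal diverges from the paper's proof in a way that introduces a genuine gap. The paper does not split on the number of outliers and does not use first-order optimality at all: it moves the single extreme particle $x_n^*$ by a \emph{finite} amount to a well-chosen position $y_n \in [-1,1]$ and shows by direct comparison that $E_n(x_0^*,\dots,x_{n-1}^*,y_n) < E_n(\tx)$. This sidesteps the issue that makes your third step problematic.

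The gap is exactly where you flag it: the ``degenerate sub-case where no gap of size $\geq R$ exists just below the outliers.'' Your first-order argument only yields $U'(x_m^*) \leq \tfrac{1}{4M}$ for the \emph{lowest} index $m$ in the shifted block (because after averaging over the block, convexity of $U$ controls only the smallest value $U'(x_m^*)$, not $U'(x_n^*)$). If the block of consecutive spacings $<R$ extends from $x_n^* > S$ down into $[-S,S]$, then $x_m^* \leq S$ and the conclusion $U'(x_m^*) \leq \tfrac{1}{4M}$ produces no contradiction. To argue this block cannot exist you invoke a minimum-spacing estimate from $\NN(\tx) \leq E_n(\tx) + C$, but that estimate is $n$-dependent and degenerates: from $\tfrac{1}{n^2}V(\ell_i) \leq B'$ one only gets $V(\ell_i) \leq C n^2$, hence $\ell_i \gtrsim n^{-2/a}$ (or $\gtrsim e^{-Cn^2}$ for $a=0$), which tends to $0$. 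There is no $n$-uniform lower bound on spacing, so the degenerate configuration (a tight cluster straddling the boundary of $[-S,S]$) is not excluded by the tools you list, and infinitesimal left-shifts cannot see it — the repulsive force $\tfrac{1}{n}|V'(\ell)|$ with tiny $\ell$ can balance an arbitrarily large confinement gradient, making the configuration critical in every shifting direction you consider. This is precisely why the paper uses a \emph{non-infinitesimal} comparison move rather than the Euler–Lagrange condition.

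Your Step 1 (the $n$-uniform upper bound $E_n(\tx) \leq B$ via equispaced test points, hence the bound $\tfrac{1}{n}\sum_i U(x_i^*) \leq C$ and the dichotomy on $K$) is sound, and the many-outlier case $K \geq c(n+1)$ does follow cleanly from the symmetric pairwise rewriting and \eqref{VUU:LB}. But to close the few-outlier case you should replace the infinitesimal shift by a finite move of $x_n^*$ (or $x_0^*$) into $[-1,1]$, as the paper does: there one bounds $\min_{y\in[-1,1]} W_n(y)$ by averaging $W_n$ over $[-1,1]$ — an integral that is controlled by $\int_{-1}^1(V+U)$, independently of the positions $x_0^*,\dots,x_{n-1}^*$ and in particular independently of how tightly clustered they are — and bounds $W_n(x_n^*)$ from below using only \eqref{U:LB} and \eqref{V:N}. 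That estimate does not require any spacing control and handles the degenerate configuration directly, which is what your proof is missing.
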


\begin{proof}
The proof is a discrete version of the proof of Proposition \ref{prop:VR}. We rely again on \eqref{V:N} with the same constant $N \geq 0$. Then, we set
\begin{equation} \label{S:n}
  S := \Big( 2N + 3 M + \int_{-1}^1 (V+U)(x) \, dx \Big) M.
\end{equation}

Since $\tx$ is ordered, it is enough to show that $-S \leq x_0^*$ and $x_n^* \leq S$. Suppose that $x_0^* < - S$ or $x_n^* > S$. We first treat the case in which both $x_0^* < - S$ and $x_n^* > S$ hold, and comment on the remaining case afterwards. We may assume that $U(x_0^*) \leq U(x_n^*)$, because otherwise we can obtain this by applying the variable transformation $x \mapsto -x$.

We will reach a contradiction with the minimality of $\tx$ by finding a lower energy state in $\Omega$. We construct this state by replacing $x_n^*$ by a more energetically favourable position $y_n \in [-1,1]$. With this aim, we first compute for any $y \in [-1,1]$
\begin{equation} \label{En:En:diff}
  n \big( E_n(\tx) - E_n(x_0^*, \dots, x_{n-1}^*, y) \big)
  = \underbrace{ \frac1n \sum_{j=0}^{n-1} V(x_n^* - x_j^*) + U(x_n^*) }_{W_n(x_n^*)}
    - \bigg( \underbrace{ \frac1n \sum_{j=0}^{n-1} V(y - x_j^*) + U(y) }_{W_n(y)} \bigg),
\end{equation}
where, for ease of notation, we have dropped the convention to have the particles positions ordered in the argument of $E_n$.

Since $W_n$ is lower semi-continuous and bounded from below on compact sets, it attains its minimum on $[-1,1]$. We take $y_n$ as a minimiser of $W_n$ over $[-1,1]$. Then, we estimate
\begin{equation*}
  2 W_n(y_n)
  \leq \int_{-1}^1 W_n(y) \, dy
  = \frac1n \sum_{j=0}^{n-1} \int_{-1}^1 V(y - x_j^*) \, dy + \int_{-1}^1 U(y)\, dy
  \leq \int_{-1}^1 (V+U)(y) \, dy.
\end{equation*}

Next we estimate $W_n(x_n^*)$ from below. Using that $V$ is non-increasing on $(0,\infty)$, we obtain
\begin{equation} \label{Wn:LB}
  W_n(x_n^*)
  \geq V(x_n^* - x_0^*) 
       + \frac{ U(x_n^*) + U(x_0^*) }4 
       + \frac{ U(x_n^*) - U(x_0^*) }4
       + \frac12 U(x_n^*).
\end{equation}
Then, following the estimates in \eqref{VUU:2} for the first two terms, and applying \eqref{U:LB} to the fourth term, we obtain
\begin{equation} \label{Wn:LB:2}
  W_n(x_n^*)
  \geq -(N + M) - 0 + \frac12 \Big( \frac{x_n^*}M - M \Big)
  > \frac S{2M} -N - \frac32 M.
\end{equation}

Collecting our results and substituting them in \eqref{En:En:diff} yields
\begin{equation*}
  n \big( E_n(\tx) - E_n(x_0^*, \dots, x_{n-1}^*, y_n) \big)
  > \frac S{2M} -N - \frac32 M - \frac12 \int_{-1}^1 (V+U)(y) \, dy,
\end{equation*}
which is non-negative by the choice of $S$ in \eqref{S:n}. This contradicts with the minimality of $\tx$.

Finally, we treat the case in which either $x_0^* < - S$ or $x_n^* > S$, but not both. Again, by changing variables if needed, we may assume that $x_n^* > S$, and thus $-x_0^* \leq S < x_n^*$. Then, the same proof can be adopted with a minor modification. This modification is to replace \eqref{Wn:LB} with the following:
\begin{equation*}
  W_n(x_n^*)
  \geq V(2 x_n^*) 
       + \frac{ U(x_n^*) + U(x_n^*) }4 
       + \frac12 U(x_n^*).
\end{equation*}
This results again in \eqref{Wn:LB:2}.
\end{proof}
 
\begin{rem}[$E_n$ attains its minimum] \label{r:En:minz:ex}
The existence of minimisers for $E_n$ is included in the proofs of Propositions \ref{prop:VR} and \ref{prop:VRn}. Indeed, since $E_n$ is continuous on $\Omega$ and $E_n (\bx) \to \infty$ as $\dist (\bx, \partial \Omega) \to 0$, it remains to be shown that $E_n (\bx) \to \infty$ as $|\bx| \to \infty$. To show this, we write
\begin{equation*}
  E_n (\bx) 
  = \frac1{2n^2} \sum_{i=0}^n \sum_{ j \neq i } \Big( V (x_i - x_j) + \frac{ \Vext(x_i) + \Vext(x_j) }2 \Big) + \frac1{2n} \sum_{i=0}^{n} \Vext (x_i)
\end{equation*}
and obtain from \eqref{U:LB} and \eqref{VUU:2} that 
\begin{equation*}
  E_n (\bx) 
  \geq - C + \frac{1}{2Mn} \sum_{i=0}^n |x_i|
  \xto{ |\bx| \to \infty } \infty.
\end{equation*}
\end{rem}

\paragraph{Properties of the compactly supported $V$ and $\orho$}
Let $V$ satisfy Assumption \ref{ass:VU} and the compact support condition \eqref{suppV:cp}. By Assumption \ref{ass:VU}, there exist constants $b,c > 0$ such that
\begin{equation} \label{Vpp:LB}
  V''(r) \geq c r^{-(2+a)} \qquad \text{for all } 0 < r \leq b.
\end{equation}
Since $\supp V$ is bounded, it is obvious from the convexity of $V$ on $\R \setminus \{0\}$ that $V \geq 0$. Moreover, for any $f,g \in L^2(\R)$,
\begin{equation} \label{Vip}
  \int_\R (V*f) g =: (f,g)_V
\end{equation}
defines an inner product, which induces the Hilbert space 
\begin{equation} \label{HV}
  \overline{ L^2(\R) }^{\| \cdot \|_V} \cong H^{-(1-a)/2} (\R).
\end{equation}
The proof of this is given in \cite[Lem.\ 3.1(iii) and Prop.\ 3.3]{KimuraVanMeurs19acc}, and uses that $V$ satisfies Assumption \ref{ass:VU} and $V \in L^1(\R)$. The following lemma is a simplified version of \cite[Thms.\ 1.4 and 1.5]{KimuraVanMeurs19acc}.

\begin{lem}[Properties of $\orho$] Let $0 \leq a < 1$. $E$ attains its minimal value on $\cP(\R)$. Its minimiser is unique, and has a density $\orho \in L^1(\R)$. Moreover, after applying an appropriate affine change of variables, $\orho$ satisfies \label{l:orho}
\begin{enumerate}[label=(\roman*)] 
  \item \label{l:orho:supp} $\supp \orho = [0,1]$;
  \item \label{l:orho:reg} $\orho \in L^1(0,1) \cap C((0,1))$;
  \item \label{l:orho:UB} $\exists \, C > 0 \ \forall \, 0 < x < 1 : \orho (x)  \leq C [x(1-x)]^{- \tfrac{1-a}2}$;
  \item \label{l:orho:EL} $
         \left\{ \begin{array}{ll}
           \displaystyle V * \orho + U - \overline C = 0
           & \text{on } [0,1] \\
           \displaystyle V * \orho + U - \overline C \geq 0
           & \text{on } \R.
         \end{array} \right\}$, where $\displaystyle \overline C := \int_\R (V * \orho) \, d\orho + \int_\R U \, d\orho > 0$.
\end{enumerate}
\end{lem}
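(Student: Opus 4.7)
Since $V \in L_\loc^1(\R)$ has compact support by \eqref{suppV:cp}, we get $V \in L^1(\R)$; together with Assumption \ref{ass:VU} this is precisely the set of hypotheses of \cite[Thms.\ 1.4 and 1.5]{KimuraVanMeurs19acc}. My plan is to invoke those theorems directly. They supply existence and uniqueness of the minimiser $\orho$, the density property $\orho \in L^1(\R)$, the continuity of $\orho$ in the interior of its support, the endpoint upper bound with exponent $(1-a)/2$, the fact that $\supp \orho$ is a compact interval, and the Euler--Lagrange equalities and inequalities in \ref{l:orho:EL}. An affine change of variables then normalises $\supp \orho$ to $[0,1]$, yielding \ref{l:orho:supp}.

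\textbf{Self-contained existence and uniqueness.} If a self-contained argument is preferred, I would take a minimising sequence $(\rho_k) \subset \cP(\R)$ for $E$. The coercivity \eqref{U:ass} of $U$, combined with $V \geq 0$ (a consequence of convexity on $(0,\infty)$ and compactness of $\supp V$, as observed just before \eqref{Vip}), bounds $\int U \, d\rho_k$ uniformly; this forces tightness of $(\rho_k)$, and Prokhorov's theorem extracts a narrowly convergent subsequence with limit $\orho$. Narrow lower semi-continuity of $E$---obtained by approximating $V$ from below by a sequence of bounded continuous truncations and applying Fatou's lemma---then gives $E(\orho) \leq \liminf_k E(\rho_k)$, so $\orho$ is a minimiser. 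Uniqueness follows from strict convexity of $\rho \mapsto \tfrac12 \|\rho\|_V^2$, which is immediate from the Hilbert-space identification \eqref{HV}, together with the affine-linearity of the confining term $\int U \, d\rho$.

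\textbf{Normalisation and main obstacle.} The cited theorems produce $\supp \orho = [\alpha, \beta]$ for some $\alpha < \beta$; I would then apply the affine change of variables $y = (x - \alpha)/(\beta - \alpha)$ and rescale $V$ and $U$ correspondingly. Since $V_a$ is homogeneous of degree $-a$ (and log-homogeneous when $a = 0$, so the rescaling only contributes an additive constant that cancels in $E$ because $\orho$ is a probability measure), the rescaled interaction potential can be written again as $c \cdot V_a + \widetilde \Vreg$ for some constant $c > 0$ and regular perturbation $\widetilde \Vreg$; the singularity strength $a$ is unchanged, and the endpoint exponent $(1-a)/2$ in \ref{l:orho:UB} is therefore preserved. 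The essential content of the lemma---interior regularity, the sharp endpoint exponent, and the interval structure of the support---lies in \cite{KimuraVanMeurs19acc}, so there is no further genuine obstacle in the present proof; the only small additional point is to identify the constant $\overline C$ in \ref{l:orho:EL} by integrating the equality on $[0,1]$ against $\orho$ and using the definition \eqref{E} of $E$, which gives $\overline C = 2 E(\orho) > 0$.
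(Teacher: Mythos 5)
Your main plan matches the paper exactly: the paper gives no proof of Lemma~\ref{l:orho} but introduces it as ``a simplified version of \cite[Thms.~1.4 and 1.5]{KimuraVanMeurs19acc}'', after reducing to compactly supported $V$ (hence $V\in L^1(\R)$) in Section~\ref{s:Part1}; the affine normalisation that puts $\supp\orho = [0,1]$ while preserving Assumption~\ref{ass:VU} and the equivalence of the Sobolev norms is handled, precisely as you suggest, by citing \cite[Step~1 in the proof of Lem.~6.8]{KimuraVanMeurs19acc} (see Section~\ref{s:a0}). Your optional self-contained existence/uniqueness sketch via the direct method and strict convexity of $\rho\mapsto\tfrac12\|\rho\|_V^2$ is a reasonable supplement and consistent with what the cited reference does.

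One small slip worth correcting: integrating the Euler--Lagrange equality $V*\orho + U - \overline C = 0$ against $d\orho$ over $[0,1]$ only returns the definitional identity $\overline C = \int(V*\orho)\,d\orho + \int U\,d\orho$, and comparing with the definition \eqref{E} gives $\overline C = E(\orho) + \tfrac12\int(V*\orho)\,d\orho$, not $2E(\orho)$; the two coincide only when $\int U\,d\orho = 0$. The stated positivity $\overline C>0$ nevertheless holds, because $\int(V*\orho)\,d\orho = \|\orho\|_V^2 > 0$ by \eqref{Vip}--\eqref{HV} and the fact that $\orho$ is a non-zero probability density, while $\int U\,d\orho\geq 0$.
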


The purpose of changing variables in Lemma \ref{l:orho} is to have $\supp \orho = [0,1]$ instead of some other bounded, closed interval. It is easy to see that Assumption \ref{ass:VU} and \eqref{suppV:cp} are invariant under an affine change of variables.

\section{Estimates on the terms $\Ta$ and $\Tb$}
\label{s:Part2}

In this section we fill in the details of Step 2 of the sketch of the proof of Theorem \ref{t}. We rely on the preparations in Step 1 to assume \eqref{suppV:suppOrho}, i.e., that the supports of $V$ and $\orho$ are compact.

Most of the estimates in the sketch are already rigorously justified. The only estimates left to prove are the two inequalities in \eqref{T1T2:est}, which we recall to be
\begin{equation} \label{TaTb:est:S6}
  \underbrace{ E(\tphi) - E_n(\tx) }_{\Ta} \leq \frac Cn + C' \NN (\tx)
  \quad \text{and} \quad
  \underbrace{ E_n(\ox) - E(\orho) }_{\Tb} \leq C n^{-1+a}.
\end{equation}
We note that $\ox \in \Omega$ is yet to be constructed, $C, C' > 0$ are independent of $n$, and $\NN$ is defined in \eqref{NN}. Recalling that $\supp \orho = [0,1]$, we define $\ox$ by
\begin{equation} \label{ox}
  \overline x_0 = 0, \quad
  \overline x_n = 1,
  \quad \text{and} \quad  
  \int_{\oox_{i-1}}^{\oox_i} \orho (x) \, dx = \frac1n 
  \quad \text{for all } i = 1,\dots,n.
\end{equation}

The structure of the proof of \eqref{TaTb:est:S6} is as follows. First, we split
\begin{equation*}
  \Tb = \underbrace{ E_n(\ox) - E(\ophi) }_{\Tc} + \underbrace{ E(\ophi) - E(\orho) }_{\Td},
\end{equation*}
where $\ophi$ is defined from $\ox$ by \eqref{phi:from:x}. We prove that $\Td \lesssim n^{-1+a}$ in Lemmas \ref{l:ophi:orho:U} and \ref{l:ophi:orho:V}. For $\Ta$ and $\Tc$, we note that -- except for the sign -- they are both of the form
\begin{equation*}
  E ( \varphi ) - E_n (\bx),
\end{equation*}
where $\bx$ and $\varphi$ are related through \eqref{phi:from:x}. In Lemma \ref{lem:bd:ene:diff:DtC} we show that
\begin{equation*}
  - \frac Cn -\NN (\bx) 
  \leq E ( \varphi ) - E_n (\bx)
  \leq \frac{C'}n + C'' \NN (\bx),
\end{equation*}
which yields
\begin{equation*}
  \Ta \leq \frac Cn + C' \NN (\tx) 
  \quad \text{and} \quad
  \Tc \leq \frac Cn + \NN (\ox).
\end{equation*}
Finally, in Lemma \ref{l:ox}\ref{l:ox:NN} we show that $\NN (\ox) \leq C n^{-1+a}$, which completes the proof of \eqref{TaTb:est:S6}. 

\paragraph{Properties of $\ox$ and the bound on $\NN (\ox)$}

We start by introducing some notation. First, for given $\bx \in \Omega$, we define 
\begin{equation} \label{m:ell}
  m_i := (x_i + x_{i-1})/2,
  \quad \text{and} \quad
  \ell_i := x_i - x_{i-1}
\end{equation}
where $m := (m_1, \ldots, m_n) \in \R^n$ lists the midpoints of neighbouring particles, and $\ell := (\ell_1, \ldots, \ell_n) \in \R^n$ the distances between them. These quantities are illustrated in Figure \ref{fig:integration:domain:2D}. For the specific choices $\ox$ and $\tx$, we denote the related midpoints and interparticle distances as $\ovm_i$, $\oell_i$ and $\tm_i$, $\tell_i$ respectively.

\begin{figure}[h]
\centering
\begin{tikzpicture}[scale=6]
    \def \xo {0}
    \def \xi {0.2}
    \def \xii {0.55}
    \def \xiii {1}
    \def \yi {\xi/2 + \xo/2}
    \def \yii {\xii/2 + \xi/2}
    \def \yiii {\xiii/2 + \xii/2}
    \def \r {0.02}

    \draw[->] (0,0) -- (1.2, 0);
    \draw[->] (0,0) -- (0, 1.2);
    \draw[dashed] (0,0) -- (\xiii, \xiii);
    
    \draw (\xo, -\r) node[below] {$x_0$};
    \draw (\xi, -\r) node[below] {$x_1$};
    \draw (\xii, -\r) node[below] {$x_2$};
    \draw (\xiii, -\r) node[below] {$x_3$};
	\draw (-\r, \xo) node[left] {$x_0$};
    \draw (-\r, \xi) node[left] {$x_1$};
    \draw (-\r, \xii) node[left] {$x_2$};
    \draw (-\r, \xiii) node[left] {$x_3$};  
    \draw (\yi, -\r) node[below] {$m_1$} -- (\yi, \r);
    \draw (\yii, -\r) node[below] {$m_2$} -- (\yii, \r);
    \draw (\yiii, -\r) node[below] {$m_3$} -- (\yiii, \r);
    \draw (-\r, \yi) node[left] {$m_1$} -- (\r, \yi);
    \draw (-\r, \yii) node[left] {$m_2$} -- (\r, \yii);
    \draw (-\r, \yiii) node[left] {$m_3$} -- (\r, \yiii);   
    
    \foreach \x in {\xo, \xi, \xii, \xiii} {
      \foreach \y in {\xo, \xi, \xii, \xiii} {
        \draw[fill = black, black] (\x, \y) circle (\r); %
      }
    }
    
    \foreach \x in {\yi, \yii, \yiii} {
      \foreach \y in {\yi, \yii, \yiii} {
        \draw (\x, \y) circle (\r); %
      }
    }
    
    \draw (\xi, \xii) rectangle (\xii, \xiii);
    \draw (\yii, \xiii) node[above] {$\ell_2$};
    \draw (\xii, \yiii) node[right] {$\ell_3$};
\end{tikzpicture}
\caption{The integration domain.}
\label{fig:integration:domain:2D}
\end{figure}
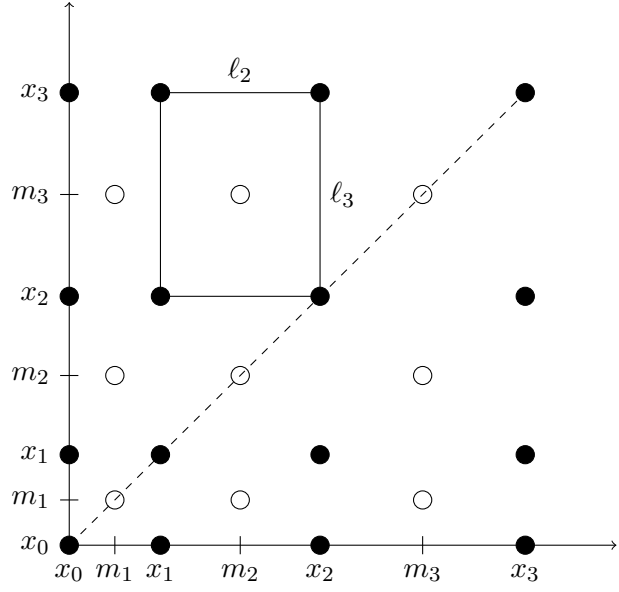

Second, we introduce
\begin{equation} \label{Dn}
  D_n : \Omega \to [0, \infty), \qquad
  D_n(\bx) := \frac12 \frac1{n^2} \sum_{i=1}^n \frac1{\ell_i^2} \iint_{( 0, \ell_i )^2} V (x - y) \, dy dx,
\end{equation}
where $\ell_i$ depends on $\bx$ through \eqref{m:ell}. We note that
\begin{equation} \label{NN:leq:Dn}
  \NN(\bx) \leq 2 D_n(\bx).
\end{equation} 

\begin{lem}[Properties of $\ox$] \label{l:ox}
There exist constants $C, c > 0$ such that for all $n \geq 1$
\begin{enumerate}[label=(\roman*)]
  \item \label{l:ox:xi} $\displaystyle \overline x_i \geq c \Big( \frac in \Big)^{\tfrac2{1+a}}
         \quad \text{and} \quad
         \overline x_{n-i} \leq 1 - c \Big( \frac in \Big)^{\tfrac2{1+a}}
         \quad \text{for all } i = 0,\ldots, \Big\lfloor \frac n2 \Big\rfloor$;
  \item \label{l:ox:elli} $\displaystyle \oell_1 \wedge \oell_n \geq c \Big( \frac 1n \Big)^{\tfrac2{1+a}}
         \quad \text{and} \quad
         \oell_i \wedge \oell_{n+1-i} \geq \frac cn \Big( \frac{i-1}n \Big)^{\tfrac{1-a}{1+a}}
         \quad \text{for all } i = 2,\ldots, \Big\lfloor \frac n2 \Big\rfloor$;
  \item \label{l:ox:NN} $D_n(\ox) + \NN (\ox) \leq C n^{-1+a}$.
\end{enumerate}

\end{lem}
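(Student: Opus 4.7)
The overall strategy is to exploit the sharp pointwise bound on $\orho$ from Lemma~\ref{l:orho}\ref{l:orho:UB} together with the definition \eqref{ox} of $\ox$. The three parts cascade naturally: (i) locates the quantile points by inverting the integral of $\orho$, (ii) bounds each spacing by dividing $1/n$ by the maximum of $\orho$ on the corresponding interval via (i), and (iii) follows by substituting the spacing estimates from (ii) into the explicit computation of the integral of $V_a$ on a small square.

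For part (i), fix $i \leq \lfloor n/2 \rfloor$ and consider two cases. If $\oox_i \leq 1/2$, then on $[0, \oox_i]$ the bound $\orho(x) \leq C[x(1-x)]^{-(1-a)/2}$ simplifies to $\orho(x) \leq C' x^{-(1-a)/2}$, and integration gives $i/n \leq C'' \oox_i^{(1+a)/2}$, which rearranges to the claimed lower bound. If $\oox_i > 1/2$, the bound is automatic because $(i/n)^{2/(1+a)} \leq i/n \leq 1/2$ when $a < 1$. The symmetric statement for $\oox_{n-i}$ follows by the analogous argument near $x = 1$, using $\orho(x) \leq C(1-x)^{-(1-a)/2}$. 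For part (ii), the extreme spacings $\oell_1 = \oox_1$ and $\oell_n = 1 - \oox_{n-1}$ are immediate from (i) applied at $i=1$. For $2 \leq i \leq \lfloor n/2 \rfloor$, I would combine
\begin{equation*}
  \frac{1}{n} = \int_{\oox_{i-1}}^{\oox_i} \orho(x)\,dx \leq \oell_i \cdot \sup_{x \in [\oox_{i-1}, \oox_i]} \orho(x)
\end{equation*}
with the bounds $\orho(x) \leq C x^{-(1-a)/2}$ and $\oox_{i-1} \geq c((i-1)/n)^{2/(1+a)}$ from (i); this yields $\sup \orho \leq C'((i-1)/n)^{-(1-a)/(1+a)}$, and hence the claimed lower bound on $\oell_i$. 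The estimate on $\oell_{n+1-i}$ is symmetric.

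For part (iii), decompose $V = V_a + \Vreg$. Since $\Vreg$ is bounded (after the reduction in Step~1 to $\supp V$ compact), its contribution to $D_n(\ox)$ and $\NN(\ox)$ is $O(1/n)$. A direct calculation gives
\begin{equation*}
  \frac{1}{\oell_i^2} \iint_{(0,\oell_i)^2} V_a(x-y)\,dx\,dy \leq C \oell_i^{-a},
\end{equation*}
and $V_a(\oell_i) = \oell_i^{-a}$, so both quantities are controlled by $\tfrac{1}{n^2}\sum_i \oell_i^{-a}$ up to $O(1/n)$. Substituting (ii), the bulk indices $2 \leq i \leq n/2$ contribute at most $C n^{-2+a}\sum_{j=1}^{n/2-1}(j/n)^{-a(1-a)/(1+a)}$, and because the exponent $a(1-a)/(1+a)$ lies in $[0,1)$, this Riemann-type sum is $O(n)$, producing $O(n^{-1+a})$. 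The boundary indices $i = 1, n$ give $O(n^{-2+2a/(1+a)})$, which is dominated by $n^{-1+a}$ since the inequality $2a/(1+a) - 2 \leq a - 1$ is equivalent to $0 \leq a^2 + 1$. The indices $n/2 < i < n$ are handled symmetrically via the symmetric half of (ii).

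The main obstacle will be part (iii): the precise spacing exponent $(1-a)/(1+a)$ provided by (ii) is exactly sharp enough to keep the resulting sum convergent against the singularity $\oell_i^{-a}$, and any weaker pointwise spacing bound would yield a divergent sum. For the degenerate case $a = 0$, the analogous computation $\iint_{(0,\ell)^2}(-\log|x-y|)\,dx\,dy = \tfrac{3}{2}\ell^2 - \ell^2 \log\ell$ produces an extra $\log \oell_i$ inside the sum, contributing an additional $\log n$ factor — consistent with the logarithmic corrections appearing in the final bound of Theorem~\ref{t}.
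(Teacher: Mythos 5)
Your proof is correct and follows the same overall strategy as the paper's: use the pointwise upper bound on $\orho$ from Lemma \ref{l:orho}\ref{l:orho:UB} to locate the quantile points in (i), bound the spacings in (ii), and sum the resulting bounds on $\oell_i^{-a}$ in (iii). The only variation is in part (ii), where the paper derives $\oell_i \geq \frac{c}{n}\oox_{i-1}^{(1-a)/2}$ by inverting the inequality $\frac1n \leq C'\bigl(\oox_i^{(1+a)/2} - \oox_{i-1}^{(1+a)/2}\bigr)$ via convexity of $t \mapsto t^{2/(1+a)}$, whereas you reach the same intermediate bound more directly from $\frac1n \leq \oell_i \,\sup_{[\oox_{i-1},\oox_i]}\orho$ combined with the pointwise bound on $\orho$ and part (i).
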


\begin{proof}
For convenience we assume that $n$ is even. Since $\overline x_0 = 0$, it is sufficient to consider any $i \geq 1$. Using Lemma \ref{l:orho}\ref{l:orho:UB}, we find that
\begin{equation*}
  \frac in 
  = \int_0^{\overline x_i} \orho
  \leq \int_0^{\overline x_i} C x^{-\tfrac{1-a}2} \, dx 
  = C' \overline x_i^{\tfrac{1+a}2}.
\end{equation*}
This implies the first part of Property \ref{l:ox:xi}. The estimate for $\overline x_{n-i}$ is found analogously.

Next we bound $\oell_i$ from below. For $i = 1$, we find
$$
  \oell_1 
  = \overline x_1
  \geq c \Big( \frac 1n \Big)^{\tfrac2{1+a}}.
$$ 
For $i \geq 2$, we estimate similarly as above
\begin{equation*}
  \frac 1n 
  = \int_{\overline x_{i-1}}^{\overline x_i} \orho
  \leq \int_{\overline x_{i-1}}^{\overline x_i} C x^{-\tfrac{1-a}2} \, dx 
  = C' \Big( \overline x_i^{\tfrac{1+a}2} - \overline x_{i-1}^{\tfrac{1+a}2} \Big).
\end{equation*} 
Inserting $\overline x_i = \oell_i + \overline x_{i-1}$, we obtain
\begin{equation} \label{p:li:LB}
  \oell_i \geq \Big( \frac1{C' n} + \overline x_{i-1}^{\tfrac{1+a}2} \Big)^{\tfrac2{1+a}} - \overline x_{i-1}.
\end{equation}
Since $\frac2{1+a} > 1$, the function $\psi(t) := t^{2/(1+a)}$ is convex for $t > 0$, and thus $\psi(t + \varepsilon) \geq \psi(t) + \varepsilon \psi'(t)$ for all $t, \varepsilon > 0$. Applying this inequality to \eqref{p:li:LB}, and then using Property \ref{l:ox:xi}, we obtain
\begin{equation*}
  \oell_i 
  \geq \frac1{C' n} \frac2{1+a} \Big( \overline x_{i-1}^{\tfrac{1+a}2} \Big)^{\tfrac2{1+a} - 1}
  = \frac cn \overline x_{i-1}^{\tfrac{1-a}2} 
  \geq \frac{c'}n \Big( \frac{i-1}n \Big)^{\tfrac{1-a}{1+a}}.
\end{equation*}
The estimate for $\oell_{n+1-i}$ is found analogously.

Finally we prove Property \ref{l:ox:NN}. By \eqref{NN:leq:Dn} it is enough to estimate $D_n(\ox)$.
From $V(x) \leq C/|x|^a$ and Property \ref{l:ox:elli} we obtain
\begin{multline*}
D_n(\ox)
  = \frac12 \frac1{n^2} \sum_{i=1}^n \iint_{( 0, 1 )^2} V (\oell_i (x - y)) \, dy dx 
  \leq \frac C{n^2} \bigg( \oell_1^{-a} + \oell_n^{-a} + \sum_{ i = 2 }^{n-1} \overline \ell_i^{-a} \bigg) \\
  \leq C n^{-2 + \tfrac{2 a}{1+a}} + C \frac{n^a}{n} \frac1n \sum_{ i = 2 }^{n/2} \Big( \frac{i-1}n \Big)^{-a\tfrac{1-a}{1+a}}
  \leq C n^{-1+a}.
\end{multline*}
\end{proof}

\paragraph{The bound on $\Td = E(\ophi) - E(\orho)$}

We recall from \eqref{E:norm} that $E$ consists of an interaction part and a confinement part. For $E(\ophi) - E(\orho)$, we bound these terms separately in  Lemmas \ref{l:ophi:orho:V} and \ref{l:ophi:orho:U} respectively.

\begin{lem} \label{l:ophi:orho:U} For all $n \geq 1$
\begin{equation*}
  \int_0^1 U (x) \, (\ophi - \orho)(x) \, dx
  \leq \frac{U(0) + U(1)}n.
\end{equation*}
\end{lem}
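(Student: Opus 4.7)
The plan is to split the integral into the pieces $\int_{\oox_{i-1}}^{\oox_i} U(\ophi - \orho)\,dx$, on each of which both densities have mass exactly $1/n$ by definition of $\ox$. Since $\ophi$ is uniform on $(\oox_{i-1}, \oox_i)$ and $U$ is convex, Hermite--Hadamard gives the chord upper bound $\int_{\oox_{i-1}}^{\oox_i} U\ophi\,dx \le (U(\oox_{i-1}) + U(\oox_i))/(2n)$, while Jensen's inequality applied to the probability measure $n\,\orho|_{(\oox_{i-1}, \oox_i)}$ gives the lower bound $\int_{\oox_{i-1}}^{\oox_i} U\orho\,dx \ge U(c_i)/n$, where
\[
  c_i := n \int_{\oox_{i-1}}^{\oox_i} x\, \orho(x)\, dx \in [\oox_{i-1}, \oox_i]
\]
is the $\orho$-barycenter on that interval. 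Summing and recalling that $\oox_0 = 0$ and $\oox_n = 1$, the lemma reduces to the purely combinatorial estimate
\[
  \sum_{i=1}^{n-1} U(\oox_i) \le \sum_{i=1}^{n} U(c_i) + \frac{U(0) + U(1)}{2}. \qquad (\star)
\]

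The main obstacle is $(\star)$, and this is where convexity of $U$ on all of $\R$ is needed. The key observation is the interlacing
\[
  0 = \oox_0 \le c_1 \le \oox_1 \le c_2 \le \dots \le c_n \le \oox_n = 1,
\]
which holds because each $c_i$ is a barycenter of a measure supported on $[\oox_{i-1}, \oox_i]$. Consequently $\oox_i \in [c_i, c_{i+1}]$ for each $1 \le i \le n-1$, and convexity of $U$ yields $U(\oox_i) \le w_i U(c_i) + (1-w_i) U(c_{i+1})$ with $w_i := (c_{i+1} - \oox_i)/(c_{i+1} - c_i) \in [0, 1]$ (the degenerate case $c_i = c_{i+1}$ forces $\oox_i = c_i$ and makes the inequality trivial). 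Extending by $w_0 := 1$ and $w_n := 0$, summing and applying Abel summation gives
\[
  \sum_{i=1}^{n-1} U(\oox_i) - \sum_{i=1}^{n} U(c_i) \le \sum_{i=1}^{n} (w_i - w_{i-1})\, U(c_i) = -U(c_1) + \sum_{i=1}^{n-1} w_i\, \bigl(U(c_i) - U(c_{i+1})\bigr).
\]

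It remains to bound the last expression by $(U(0) + U(1))/2$. Since $w_i \in [0,1]$, the sum on the right is dominated by the total decrease $\sum_i \bigl(U(c_i) - U(c_{i+1})\bigr)_+$ of the sequence $\bigl(U(c_i)\bigr)_{i=1}^n$. Convexity of $U$ on $\R$ makes this sequence unimodal (non-increasing then non-decreasing), so its total decrease equals $U(c_1) - \min_i U(c_i) \le U(c_1)$, where the final inequality uses $\min_{\R} U = 0$ from Assumption \ref{ass:VU}. Hence the right-hand side of the previous display is $\le 0 \le (U(0) + U(1))/2$ since $U \ge 0$, which proves $(\star)$ and completes the argument. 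The only subtle ingredient beyond the standard Hermite--Hadamard/Jensen pair is how the interlacing, Abel summation, and unimodality of a convex sequence conspire to produce exactly the right-hand side $(U(0)+U(1))/n$.
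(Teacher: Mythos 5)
Your argument is correct. Both you and the paper start from the same decomposition over the intervals $[\oox_{i-1},\oox_i]$, exploiting that $\ophi$ and $\orho$ each put mass exactly $1/n$ there, and both ultimately draw on the convexity of $U$ on $\R$. Where you diverge is in the local estimate and the bookkeeping that follows. The paper uses the crude bound
\[
\int_{\oox_{i-1}}^{\oox_i} U(\ophi-\orho)\,dx
\;\leq\; \frac1n\Big(\max_{[\oox_{i-1},\oox_i]}U-\min_{[\oox_{i-1},\oox_i]}U\Big),
\]
and then the convexity of $U$ makes the max--min increments telescope along the two monotone halves of $[0,1]$ around the minimiser of $U$, giving $U(0)+U(1)$ directly. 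You instead use the sharper Hermite--Hadamard chord bound for $\int U\ophi$ and Jensen with the $\orho$-barycentres $c_i$ for $\int U\orho$, which forces you into the combinatorial inequality $(\star)$ and the interlacing/Abel-summation/unimodality chain to close the argument. Your route is tighter on each cell (so it would survive weakening the conclusion, e.g.\ it yields the stronger intermediate estimate in terms of the $c_i$) but pays for it with more machinery; the paper's route is shorter and more elementary because the max--min telescoping absorbs all the convexity information in one step. Both are valid; the paper's is the more economical proof of exactly this statement, while yours is essentially a refinement of the same idea.
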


\begin{proof}
From \eqref{phi:from:x} and \eqref{ox} we observe that the densities $\ophi$ and $\orho$ have mass $1/n$ on $[\overline x_{i-1}, \overline x_{i}]$ for each $i$. We use this to 
 estimate
\begin{align} \label{pf:U}
  \int_0^1 U (\ophi - \orho)
  = \sum_{i=1}^n \int_{\overline x_{i-1}}^{\overline x_i} U (\ophi - \orho)
  \leq \frac1n \sum_{i=1}^n \Big( \max_{[\overline x_{i-1}, \overline x_i]} U - \min_{[\overline x_{i-1}, \overline x_i]} U \Big).
\end{align}
Assume for convenience that $\overline x_I$ is a minimiser of $U$ for some $I \in \{1,\ldots,n\}$. Then, since $U$ is convex, it is non-increasing on $[0, \overline x_I]$, and thus
\[
  \frac1n \sum_{i=1}^I \Big( \max_{[\overline x_{i-1}, \overline x_i]} U - \min_{[\overline x_{i-1}, \overline x_i]} U \Big)
  = \frac1n \sum_{i=1}^I \big( U(\overline x_{i-1}) - U(\overline x_{i}) \big)
  = \frac1n \big( U(\overline x_0) - U(\overline x_I) \big)
  = \frac{U(0)}n.
\]
A similar argument on $[\overline x_I, 1]$ yields that the remaining terms of the sum in the right-hand side of \eqref{pf:U} equal $U(1)/n$, and the statement of Lemma \ref{l:ophi:orho:U} follows. In the case where the interval of minimisers of $U$ is contained in $(\overline x_{I-1}, \overline x_{I})$ for some $I$, a similar argument applies.
\end{proof}

\begin{lem} \label{l:ophi:orho:V} There exists a constant $C > 0$ such that for all $n \geq 1$
\begin{equation*}
  \| \overline \varphi  \|_V^2 - \| \overline \rho \|_V^2 \leq C n^{-1+a}.
\end{equation*}
\end{lem}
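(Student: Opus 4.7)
The plan is to decompose
\begin{equation*}
  \| \ophi \|_V^2 - \| \orho \|_V^2 = 2 (\ophi - \orho, \orho)_V + \| \ophi - \orho \|_V^2
\end{equation*}
and show each term is bounded by $C n^{-1+a}$. For the cross term, observe that $\supp(\ophi - \orho) \subseteq [0,1]$, that $\int_\R (\ophi - \orho) \, dx = 0$, and, by the Euler--Lagrange identity in Lemma~\ref{l:orho}\ref{l:orho:EL}, that $V \ast \orho = \overline C - U$ on $[0,1]$. Therefore
\begin{equation*}
  (\ophi - \orho, \orho)_V = \int_\R (V \ast \orho)(\ophi - \orho) \, dx = -\int_0^1 U (\ophi - \orho) \, dx,
\end{equation*}
and the very argument used in the proof of Lemma~\ref{l:ophi:orho:U}, applied in absolute value on each $R_i := (\oox_{i-1}, \oox_i)$, gives the two-sided bound $|\int_0^1 U (\ophi - \orho) \, dx| \leq (U(0) + U(1))/n$. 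Hence the cross term contributes $O(1/n) \leq C n^{-1+a}$.

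For the quadratic term, I decompose $[0,1]^2$ into the rectangles $R_i \times R_j$:
\begin{equation*}
  \| \ophi - \orho \|_V^2 = \sum_{i,j = 1}^n J_{ij}, \qquad J_{ij} := \iint_{R_i \times R_j} V(x-y)(\ophi - \orho)(x)(\ophi - \orho)(y)\, dx\, dy,
\end{equation*}
and treat the far-diagonal blocks $|i-j| \geq 2$ and the near-diagonal blocks $|i - j| \leq 1$ separately. The decisive feature is the zero-mean property $\int_{R_i}(\ophi - \orho)\, dx = 0$ (by construction of $\ox$ and $\ophi$). On a far-diagonal block, $V(x-y)$ is $C^2$ on $R_i \times R_j$; a first-order Taylor expansion of $V(x-y)$ about $\ovm_i - \ovm_j$ annihilates the constant and linear contributions by the zero-mean property, leaving a quadratic remainder bounded after integration against $|\ophi - \orho| \otimes |\ophi - \orho|$ by $C n^{-2} \, \max_{R_i - R_j} |V''| \cdot (\oell_i^2 + \oell_j^2)$. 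Combining with $|V''(r)| \leq C |r|^{-2-a}$ (from Assumption~\ref{ass:VU} and the $C^2$-regularity of $\Vreg$) and summing against the estimates on $\oell_i$ and $\ovm_i - \ovm_j$ from Lemma~\ref{l:ox}, the total far-diagonal contribution is at most $C n^{-1+a}$; the essential reason is that $\sum_{k \geq 2} k^{-2-a}$ converges.

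For the near-diagonal blocks $|i - j| \leq 1$, where $V$ is singular and the Taylor expansion is unavailable, I estimate $|J_{ij}| \leq \iint V(x-y)(\ophi + \orho)(x)(\ophi + \orho)(y) \, dx \, dy$ and expand into four contributions. The $\ophi \otimes \ophi$ contribution, via the change of variables $(x, y) \mapsto (\oox_{i-1} + \oell_i x, \oox_{j-1} + \oell_j y)$, reduces to a quantity comparable to $D_n(\ox) + \NN(\ox) \leq C n^{-1+a}$ by Lemma~\ref{l:ox}\ref{l:ox:NN}. The mixed and $\orho \otimes \orho$ contributions are handled analogously, comparing $\orho$ to $\ophi$ on each $R_i$ via the pointwise bound $\orho(x) \leq C [x(1-x)]^{-(1-a)/2}$ from Lemma~\ref{l:orho}\ref{l:orho:UB}. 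I expect this near-diagonal part to be the main obstacle: the blow-up of $\orho$ at the endpoints of $[0,1]$, the unequal length scales $\oell_i$ from Lemma~\ref{l:ox}\ref{l:ox:elli}, and the $|x-y|^{-a}$ singularity of $V$ have to be balanced carefully to retain the sharp exponent $1 - a$.
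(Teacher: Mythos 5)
Your approach is genuinely different from the paper's, and the decomposition into a cross term and a quadratic term is an attractive idea: the treatment of the cross term via the Euler--Lagrange identity of Lemma~\ref{l:orho}\ref{l:orho:EL} is correct and elegant, reducing $(\ophi - \orho, \orho)_V$ to $-\int_0^1 U(\ophi - \orho)$ and then to $O(1/n)$ by a two-sided version of the Lemma~\ref{l:ophi:orho:U} argument (the factor $2$ from the triangle inequality on each $R_i$ is harmless). However, the quadratic term $\|\ophi - \orho\|_V^2$ has two genuine gaps, and neither can be closed with the ingredients available in the paper. For the far-diagonal blocks, the Taylor expansion argument leads to $|J_{ij}| \lesssim n^{-2} (\oell_i + \oell_j)^2 \max_{R_i - R_j}|V''|$, and making the double sum converge to $C n^{-1+a}$ requires controlling the ratios $\oell_i / (\oox_{i-1} - \oox_j)$ from above; but Lemma~\ref{l:ox} provides only \emph{lower} bounds on $\oell_i$ and $\oox_i$, not the upper bounds (or the comparability of neighboring spacings) that would be needed here. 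The more serious gap is the near-diagonal $\orho \otimes \orho$ contribution: you must prove $\sum_{|i-j|\leq 1} \iint_{R_i \times R_j} V(x-y)\,\orho(x)\orho(y)\,dy\,dx \leq C n^{-1+a}$, which you acknowledge as "the main obstacle" and leave unresolved. This estimate is not a consequence of the one-sided pointwise bound on $\orho$ in Lemma~\ref{l:orho}\ref{l:orho:UB}, because without lower bounds on $\orho$ (equivalently, upper bounds on $\oell_i$) the near-diagonal strip around any given $R_i$ could be wide and carry a non-negligible contribution. The paper sidesteps both difficulties by working directly with $\|\ophi\|_V^2 - \|\orho\|_V^2$ rather than $\|\ophi - \orho\|_V^2$: near the diagonal ($|i-j| \leq 1$) it simply drops the $-\orho\orho$ term since $V \geq 0$, reducing to $D_n(\ox)$; and away from the diagonal it uses only the monotonicity of $V$ (not $V''$) to replace $V(x-y)$ by its sup and inf on each rectangle, producing a telescoping sum that collapses to $O(\NN(\ox))$. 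That route needs none of the two-sided control your argument requires, which is precisely what makes it work with only the information Lemma~\ref{l:orho} and Lemma~\ref{l:ox} provide.
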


\begin{proof}
We write
\begin{equation*}
  \| \overline \varphi  \|_V^2 - \| \overline \rho \|_V^2
  = \sum_{i=1}^n \sum_{ j=1 }^n \int_{\overline x_{i-1}}^{\overline x_i} \int_{\overline x_{j-1}}^{\overline x_j} V (x - y) \, \big( \ophi(x) \ophi(y) - \orho(x) \orho(y) \big) \, dy dx 
  =: \Te + \Tf + \Tg, 
\end{equation*}
where the terms $\Te$, $ \Tf$ and $\Tg$ correspond to the part of the sum where $i - j = 0$, $|i-j| = 1$ and $|i-j| \geq 2$ respectively. We bound all these three terms separately. With this aim, we set 
\begin{equation*}
  \overline \varphi_i := \overline \varphi \indicatornoacc{ ( \overline x_{i-1} , \overline x_i ) }
  \qquad \text{for } i =1, \ldots, n,
\end{equation*}
and note that, by \eqref{Dn} and Lemma \ref{l:ox}\ref{l:ox:NN}, 
\begin{equation*}
  \sum_{i=1}^n \| \ophi_i \|_V^2
  = 2 D_n(\ox)
  \leq C n^{-1+a}.
\end{equation*}

For $\Te$ we simply estimate  
\begin{align*}
  \Te &= \sum_{i=1}^n \int_{\overline x_{i-1}}^{\overline x_i} \int_{\overline x_{i-1}}^{\overline x_i} V (x - y) \, \big( \ophi(x) \ophi(y) - \orho(x) \orho(y) \big) \, dy dx \\
  &\leq \sum_{i=1}^n \int_{\overline x_{i-1}}^{\overline x_i} \int_{\overline x_{i-1}}^{\overline x_i} V (x - y) \ophi(x) \ophi(y) \, dy dx 
  = \sum_{i=1}^n \| \ophi_i \|_V^2
  \leq C n^{-1+a}.
\end{align*}
For $\Tf$, we similarly obtain
\begin{align} \notag 
  \Tf &= 2 \sum_{i=1}^{n-1} \int_{\overline x_{i}}^{\overline x_{i+1}} \int_{\overline x_{i-1}}^{\overline x_i} V (x - y) \, \big( \ophi(x) \ophi(y) - \orho(x) \orho(y) \big) \, dy dx \\\label{p:T1:UB}
  &\leq 2\sum_{i=1}^{n-1} (\ophi_{i+1}, \ophi_i)_V
  \leq \sum_{i=1}^{n-1} \big( \| \ophi_{i+1} \|_V^2 + \| \ophi_{i} \|_V^2 \big)
  \leq C n^{-1+a}.
\end{align} 

Finally we estimate $\Tg$. We note that in the integrals in the terms of $\Tg$, the singularity of $V$ is avoided. This allows for pointwise evaluation of the integrand. By using that $V$ is even, and non-increasing on the positive axis, we estimate
\begin{align*}
  \Tg
  &= 2 \sum_{i=3}^n \sum_{ j=1 }^{i-2} \bigg( \int_{\overline x_{i-1}}^{\overline x_i} \int_{\overline x_{j-1}}^{\overline x_j} V (x-y) \, \big( \ophi(x) \ophi(y) - \orho(x) \orho(y) \big) \, dy dx  \\ 
  &\leq 2 \sum_{i=3}^n \sum_{ j=1 }^{i-2} \bigg( \int_{\overline x_{i-1}}^{\overline x_i} \int_{\overline x_{j-1}}^{\overline x_j} V (\overline x_{i-1} - \overline x_{j}) \, \ophi(x) \ophi(y) \, dy dx  \\ 
  &  \qquad \qquad \qquad \qquad - \int_{\overline x_{i-1}}^{\overline x_i} \int_{\overline x_{j-1}}^{\overline x_j} V (\overline x_{i} - \overline x_{j-1}) \, \orho(x) \orho(y)  \, dy dx \bigg) \\
  &= \frac2{n^2} \sum_{i=3}^n \sum_{ j=1 }^{i-2} \big(
       V (\overline x_{i-1} - \overline x_j) - V (\overline x_i - \overline x_{j-1}) \big).
\end{align*}
We recognise a telescopic series after changing the summation index to $k = i+j-1$:
\begin{align*}
  \Tg
  &\leq \frac2{n^2} \sum_{i=3}^n \sum_{ j=1 }^{i-2} \big(
       V (\overline x_{i-1} - \overline x_j) - V (\overline x_i - \overline x_{j-1}) \big) \\
  &= \frac2{n^2} \sum_{i=3}^n \sum_{ k = i }^{2i - 3} \big(
       V (\overline x_{i-1} - \overline x_{k-(i-1)}) - V (\overline x_i - \overline x_{k-i}) \big) \\
       &= \frac2{n^2} \sum_{ k = 3 }^{2n - 3} \sum_{i=\lceil \frac{k+3}2 \rceil}^{k \wedge n} \big(
       V (\overline x_{i-1} - \overline x_{k-(i-1)}) - V (\overline x_i - \overline x_{k-i}) \big) \\
  &= \frac2{n^2} \sum_{ k = 3 }^{2n - 3} \big(
       V (\overline x_{\lceil \frac{k+1}2 \rceil} - \overline x_{\lfloor \frac{k-1}2 \rfloor}) - V (\overline x_{k \wedge n} - \overline x_{0 \vee (k-n)}) \big) \\
  &\leq \frac2{n^2} \sum_{ k = 3 }^{2n - 3}
       V (\overline x_{\lceil \frac{k+1}2 \rceil} - \overline x_{\lfloor \frac{k-1}2 \rfloor})
  \leq \frac{4}{n^2} \sum_{ i = 2 }^{n - 1}
       V (\overline \ell_i)
  \leq 4 \NN(\ox)
\end{align*}
which, by Lemma \ref{l:ox}\ref{l:ox:NN}, is bounded by $C n^{-1+a}$.
\end{proof}

\paragraph{The upper and lower bound on $E(\varphi) - E_n (\bx)$}

First, we state and prove the opposite inequality in \eqref{NN:leq:Dn} as an auxiliary result.

\begin{lem} \label{l:Dn:by:NN} There exists constants $C, C' > 0$ such that for all $n \geq 1$ and all $\bx \in \Omega$
$$
  D_n(\bx) \leq C \NN(\bx) + \frac{C'}n.
$$
\end{lem}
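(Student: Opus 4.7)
The plan is to reduce the lemma to a single-interval estimate: I claim that there exist constants $C, C' > 0$ independent of $\ell$ such that
\[
  \frac{1}{\ell^2}\iint_{(0,\ell)^2} V(x-y)\,dy\,dx \leq C V(\ell) + C' \qquad \text{for all } \ell > 0.
\]
Once this is established, summing over $i = 1, \ldots, n$ and dividing by $2n^2$ immediately yields $D_n(\bx) \leq (C/2)\NN(\bx) + C'/(2n)$ by the definitions \eqref{Dn} and \eqref{NN}. Here I use $V \geq 0$ (noted just above \eqref{Vip}), so the additive constants combine without sign issues.

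For the single-interval estimate, I would first change variables and use that $V$ is even:
\[
  \iint_{(0,\ell)^2} V(x-y)\,dy\,dx = \int_{-\ell}^{\ell} V(u)(\ell - |u|)\,du = 2\int_0^\ell V(u)(\ell - u)\,du.
\]
Then I decompose $V = V_a + \Vreg$ and compute the singular part by direct integration: one finds $\tfrac{2}{\ell^2}\int_0^\ell V_a(u)(\ell-u)\,du = \alpha_a V_a(\ell) + \beta_a$, with $\alpha_a = 2/((1-a)(2-a))$ and $\beta_a = 0$ for $0 < a < 1$, and $\alpha_0 = 1$, $\beta_0 = 3/2$ for $a = 0$. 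Hence the singular part is already controlled by $V_a(\ell)$ up to an additive constant.

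For the regular part $\Vreg$, I would exploit the compact support assumption \eqref{suppV:cp}: fix $R > 0$ with $\supp V \subset [-R,R]$, so that $\Vreg$ is bounded on $[-R,R]$ by some constant $M$ (by $C^2$-regularity). For $\ell \leq R$ this controls the $\Vreg$-contribution to the integral by $M$, and converting $V_a(\ell) \leq V(\ell) + M$ (valid on $[-R,R]$) turns the bound into the required $CV(\ell) + C'$ form. For $\ell > R$ the integrand vanishes for $u > R$, so
\[
  \frac{1}{\ell^2}\iint_{(0,\ell)^2} V(x-y)\,dy\,dx \leq \frac{2}{\ell}\int_0^R V(u)\,du \leq \frac{2\|V\|_{L^1}}{R},
\]
a constant that is absorbed into $C'$. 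The argument is essentially a direct computation; the only mild technicality is matching the $V_a(\ell)$-bound to a $V(\ell)$-bound near the singularity, which is settled by the boundedness of $\Vreg$ on $\supp V$. I do not expect any substantive obstacle.
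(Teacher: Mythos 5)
Your proof is correct and takes essentially the same approach as the paper: decompose $V = V_a + \Vreg$, compute the $V_a$ contribution to the averaged double integral exactly (yielding $\alpha_a V_a(\ell)$ for $0<a<1$ and $V_0(\ell) + 3/2$ for $a=0$, matching the paper's $C_a$ and the remark in Table~\ref{tab:1}), and bound the $\Vreg$ contribution by its sup-norm while using $\Vreg(\ell_i) = V(\ell_i) - V_a(\ell_i)$ to convert the final bound into one involving $\NN(\bx)$. The only substantive difference is that you explicitly treat the regime $\ell > R$ by exploiting $\supp V \subset [-R,R]$, whereas the paper's use of $\|\Vreg\|_{C([-1,1])}$ tacitly assumes $\ell_i \leq 1$; your split is slightly more careful and removes that implicit restriction, which is relevant for $a=0$ where $\Vreg$ is not globally bounded. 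This is a cosmetic strengthening rather than a different route.
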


\begin{proof}
Using $V = V_a + \Vreg$, we split $D_n(\bx)$ in two parts:
\begin{equation*}
  D_n(\bx)
  = \frac12 \frac1{n^2} \sum_{i=1}^n \frac1{\ell_i^2} \iint_{( 0, \ell_i )^2} V_a (x - y) \, dy dx
    + \frac12 \frac1{n^2} \sum_{i=1}^n \frac1{\ell_i^2} \iint_{( 0, \ell_i )^2} \Vreg (x - y) \, dy dx.
\end{equation*}
The first part can be computed explicitly. This yields
\begin{equation*}
  \frac12 \frac1{n^2} \sum_{i=1}^n \frac1{\ell_i^2} \iint_{( 0, \ell_i )^2} V_a (x - y) \, dy dx
  = \frac{C_a}{n^2} \sum_{i=1}^n V_a (\ell_i)
\end{equation*}
for some explicit constant $C_a > 0$. For the second term, we rely on the regularity of $\Vreg$ to estimate
\begin{equation*}
  \frac12 \frac1{n^2} \sum_{i=1}^n \frac1{\ell_i^2} \iint_{( 0, \ell_i )^2} \Vreg (x - y) \, dy dx
  \leq \frac1{n} \Big( \frac12 + C_a \Big) \| \Vreg \|_{C([-1,1])} + \frac{C_a}{n^2} \sum_{i=1}^n \Vreg (\ell_i).
\end{equation*}
Collecting these findings, we obtain the estimate in Lemma \ref{l:Dn:by:NN}:
\[
  D_n(\bx)
  \leq \frac{C}{n^2} \sum_{i=1}^n (V_a + \Vreg) (\ell_i) + \frac{C'}n
  = C \NN (\bx) + \frac{C'}n.
\]
\end{proof}

\begin{lem} [Energy bounds on the piecewise constant approximation] \label{lem:bd:ene:diff:DtC} There exists $C \geq 0$ such that for all $n \geq 1$ and all $\bx \in \Omega$
\begin{equation*}
  -\NN (\bx) - \frac1n (U(0) + U(1)) 
  \leq E ( \varphi ) - E_n (\bx)
  \leq C \Big( \NN (\bx) + \frac1n \Big),
\end{equation*}
where $\varphi$ is the piece-wise constant function constructed from $\bx$ by \eqref{phi:from:x}. 

\end{lem}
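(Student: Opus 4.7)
The plan is to split $E(\varphi) - E_n(\bx) = [I_E - I_n] + [U_E - U_n]$ into its interaction and confinement parts and to bound each contribution separately, exploiting the convexity and monotonicity of $V$ and $U$.

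For the confinement part, convexity of $U$ gives $\tfrac{1}{\ell_i}\int_{x_{i-1}}^{x_i} U \le \tfrac12\bigl(U(x_{i-1})+U(x_i)\bigr)$; summing telescopes to $U_E - U_n \le -\tfrac{U(x_0)+U(x_n)}{2n} \le 0$, which is stronger than the required upper bound. For the lower bound, the nonnegative convex $U$ is monotone on every subinterval avoiding its minimiser, so $\tfrac{1}{\ell_i}\int U \ge \min(U(x_{i-1}), U(x_i))$ on each such cell. Handling the at-most-one exceptional cell containing the minimiser via $\tfrac{1}{\ell_i}\int U \ge 0 = \min U$ and telescoping exactly as in the proof of Lemma \ref{l:ophi:orho:U}, one obtains $U_E \ge \tfrac{1}{n}\sum_{i=1}^{n-1} U(x_i)$, hence $U_E - U_n \ge -\tfrac{U(x_0)+U(x_n)}{n}$.

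For the interaction part, I decompose $n^2 I_E = n^2 D_n + A_1 + A_2$, where, writing $R_{ij} := (x_{i-1}, x_i)\times(x_{j-1}, x_j)$, the sum $A_1 := \sum_{j=1}^{n-1}\tfrac{1}{\ell_{j+1}\ell_j}\iint_{R_{j+1,j}} V(x-y)\,dx\,dy$ gathers the near-diagonal rectangles ($|i-j|=1$, ordered) and $A_2$ the far rectangles ($i > j+1$, ordered). Lemma \ref{l:Dn:by:NN} gives $D_n \le C\NN(\bx) + C'/n$, and clearly $D_n \ge 0$. For $R_{ij}$ with $i > j+1$, monotonicity of $V$ on $(0,\infty)$ yields the two-sided bound $V(x_i - x_{j-1}) \le \tfrac{1}{\ell_i\ell_j}\iint_{R_{ij}} V \le V(x_{i-1}-x_j)$. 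Summing the upper bound and reindexing $(a,b) := (i-1, j)$ identifies $A_2$ with a subset of the particle-pair sum defining $n^2 I_n$, so $A_2 \le n^2 I_n$; summing the lower bound identifies it with the sum of $V(x_a - x_b)$ over all gap-$\ge 3$ particle pairs.

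The near-diagonal piece $A_1$ is the main obstacle, since it must absorb the singularity of $V$ exclusively into $\NN$. The key computation is the substitution $u = x - x_j$, $v = x_j - y$, which reduces the relevant integral to $\int_0^{\ell_{j+1}}\!\int_0^{\ell_j} V(u+v)\,du\,dv$. For the upper bound I combine $V(u+v) \le V(u)\wedge V(v)$ (by monotonicity of $V$) with the explicit estimate $\tfrac{1}{\ell}\int_0^\ell V_a \le \tfrac{1}{1-a}V_a(\ell) + C$ and the local boundedness of $\Vreg$ to obtain $\tfrac{1}{\ell_j\ell_{j+1}}\iint V \le C[V(\ell_j)+V(\ell_{j+1})] + C'$; summing then yields $A_1 \le C n^2 \NN(\bx) + C' n$. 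For the lower bound, since $u+v \le \ell_j + \ell_{j+1}$ throughout the rectangle, monotonicity gives $\tfrac{1}{\ell_j\ell_{j+1}}\iint V \ge V(\ell_j + \ell_{j+1}) = V(x_{j+1}-x_{j-1})$, whose sum over $j$ accounts exactly for the gap-$2$ particle pairs. Combining, $A_1 + A_2 \ge n^2 I_n - \sum_i V(\ell_i) = n^2(I_n - \NN(\bx))$, so $I_E - I_n \ge -\NN(\bx)$. Adding the corresponding confinement bounds then completes the proof of both inequalities.
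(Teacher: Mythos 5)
Your proof is correct, and it reaches the same two-sided estimate as the paper by the same broad strategy: split off the confinement part, write the interaction part of $E(\varphi)$ as $D_n(\bx)$ plus off-diagonal rectangle contributions, control $D_n$ by $\NN$ via Lemma~\ref{l:Dn:by:NN}, and compare the off-diagonal contributions to $E_n$'s pair sum by monotonicity of $V$ plus index shifts. Where you diverge from the paper is in the internal bookkeeping. The paper inserts the midpoint values $V(m_i-m_j)$ and decomposes $E(\varphi)-E_n(\bx)=D_n+Q_n-R_n-B_n$ with all four error terms individually nonnegative; it then handles the far rectangles via the trapezoidal convexity bound $\tfrac1{\ell_i\ell_j}\iint_{R_{ij}}V\le\tfrac12[V(x_{i-1}-x_j)+V(x_i-x_{j-1})]$ together with a telescoping inclusion of index sets, and the near-diagonal block via the Cauchy--Schwarz argument in \eqref{p:T1:UB} which absorbs it into $2D_n$. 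You avoid the midpoint construction entirely: for far rectangles you use the cruder monotonicity bound by the supremum $V(x_{i-1}-x_j)$ on the rectangle, which after the reindex $(a,b)=(i-1,j)$ already lands inside the pair sum $n^2 I_n$; for the near-diagonal block you compute $\tfrac1\ell\int_0^\ell V_a=\tfrac1{1-a}V_a(\ell)$ (with an extra $+1$ when $a=0$) explicitly and pass to $V(\ell)$ up to a constant. Your route is more elementary (monotonicity in place of convexity, direct averaging in place of the $(\cdot,\cdot)_V$ inner product), at the cost of somewhat looser pointwise bounds and a slightly fiddly case distinction between small and large $\ell_j$ when replacing $V_a(\ell)$ by $V(\ell)+C$ (for $a=0$, $\Vreg$ is not globally bounded, but this is harmless because $V$ is compactly supported so the average over a long interval is trivially bounded). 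The paper's midpoint decomposition would be preferred if one wanted to track constants sharply, but since only the order of magnitude in $\NN(\bx)+1/n$ matters here, either route is adequate. One small remark on the confinement part: what you actually prove is $-\tfrac1n(U(x_0)+U(x_n))\le U_E-U_n\le 0$; the paper's stated constant $U(0)+U(1)$ only coincides with yours in the context where the lemma is applied (to $\ox$ with $\ox_0=0$, $\ox_n=1$, and to the minimiser $\tx$), and your version is in fact the cleaner one for arbitrary $\bx\in\Omega$.
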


\begin{proof} We divide the proof in four steps. In Step 1 we bound the confinement part of $E ( \varphi ) - E_n (\bx)$, and in Steps 2 -- 4 we bound the interaction part. Given $\bx$, we let $m_i$ and $\ell_i$ be defined by \eqref{m:ell} (see Figure \ref{fig:integration:domain:2D}).

\smallskip

\emph{Step 1: bounds on the confinement part.} The confinement part of $E ( \varphi ) - E_n (\bx)$ is given by
\begin{equation*}
  F_n(\bx) := \int_0^1 U \varphi - \frac1n \sum_{i=0}^n U(x_i)
  = \frac1n \sum_{i=1}^n \frac1{\ell_i} \int_{x_{i-1}}^{x_i} U - \frac1n \sum_{i=0}^n U(x_i).
\end{equation*}
Since $U \geq 0$ is convex, it is easy to see that $- \frac1n (U(0) + U(1)) \leq F_n(\bx) \leq 0$.

In the remainder of the proof, we set $U \equiv 0$ to focus on the interaction part.
\smallskip

\emph{Step 2: rewriting $E ( \varphi ) - E_n (\bx)$ as a sum of error terms.} We show that 
\begin{equation} \label{p:Ediff:DQCB}
  E ( \varphi ) - E_n (\bx)
  = D_n (\bx) + Q_n(\bx) - R_n (\bx) - B_n(\bx),
\end{equation}
where the four \emph{non-negative} error terms are given by \eqref{Dn} and
\begin{align*} 
  Q_n(\bx) \, &:= \frac1{n^2} \sum_{i=2}^n \sum_{ j=1 }^{i-1} \bigg[ \frac1{\ell_i \ell_j} \int_{x_{i-1}}^{x_i} \int_{x_{j-1}}^{x_j} V (  x - y ) \, dy dx - V( m_i - m_j ) \bigg],  \\
  R_n(\bx) &:= \frac1{n^2} \sum_{i=2}^n \sum_{ j=1 }^{i-1} \Big( \frac12 V( x_i - x_j ) + \frac12 V( x_{i - 1} - x_{j - 1} ) - V( m_i - m_j ) \Big), \\
  B_n(\bx) &:= \frac12 \frac1{n^2} \sum_{i=1}^{n} \big[ V (x_i - x_0) + V (x_{n} - x_{i-1}) \big].
\end{align*}
Indeed, \eqref{p:Ediff:DQCB} follows from
\begin{align} \notag
  E ( \varphi ) 
  &= \frac12 \int_0^1 \int_0^1 V (x - y) \varphi (x) \varphi (y) \, dy dx \\\label{Ephi:nice}
  &= \frac12 \sum_{i=1}^n \sum_{ j=1 }^n \int_{x_{i-1}}^{x_i} \int_{x_{j-1}}^{x_j} V (x - y) \frac{1/n}{ x_i - x_{i-1} } \frac{1/n}{ x_j - x_{j-1} } \, dy dx \\\notag
  &= \frac1{n^2} \sum_{i=2}^n \sum_{ j=1}^{i-1} \frac1{\ell_i \ell_j} \int_{x_{i-1}}^{x_i} \int_{x_{j-1}}^{x_j} V (  x - y ) \, dy dx + D_n (\bx) \\\notag
  &= \frac1{n^2} \sum_{i=2}^n \sum_{ j=1}^{i-1} V (m_i - m_j) + (D_n + Q_n) (\bx) \\\notag
  &= \frac1{2 n^2} \sum_{i=2}^n \sum_{ j=1}^{i-1} \big( V( x_i - x_j ) + V( x_{i - 1} - x_{j - 1} ) \big) + (D_n + Q_n - R_n) (\bx) \\\notag
  &= E_n(\bx) + (D_n + Q_n - R_n - B_n) (\bx).
\end{align}

\smallskip

\emph{Step 3: the lower bound for $E ( \varphi ) - E_n (\bx)$.}
Since the error terms $D_n$, $Q_n$, $R_n$ and $B_n$ are all non-negative, we observe from \eqref{p:Ediff:DQCB} that it is enough to show that
\begin{equation*}
  R_n(\bx) + B_n(\bx) \leq \NN (\bx).
\end{equation*}
By using $m_i - m_j \leq x_i - x_{j-1}$, we obtain this estimate from
\begin{align*}
  R_n(\bx) + B_n(\bx)
  &= \frac1{n^2} \sum_{i=1}^n \sum_{ j=0 }^{i-1} V(  x_i -  x_j ) - \frac1{n^2} \sum_{i=2}^n \sum_{ j=1 }^{i-1} V(  m_i -  m_j ) \\
  &\leq \frac1{n^2} \sum_{i=1}^n \sum_{ j=0 }^{i-1} V(  x_i -  x_j ) - \frac1{n^2} \sum_{i=2}^n \sum_{ j=0 }^{i-2} V(  x_i -  x_j )
  = \NN (\bx).
\end{align*}

\emph{Step 4: the upper bound for $E ( \varphi ) - E_n (\bx)$.} Since $B_n \geq 0$, it is enough to show that
\begin{equation*} 
  D_n(\bx) + Q_n (\bx) - R_n (\bx)
  \leq C \Big( \NN (\bx) + \frac1n \Big).
\end{equation*}
Then, by Lemma \ref{l:Dn:by:NN}, it suffices to show that $Q_n - R_n \leq 2 D_n$. Writing
\begin{multline*}
  Q_n(\bx) - R_n(\bx) \\
  = \frac1{n^2} \sum_{i=2}^n \sum_{ j=1 }^{i-1} \bigg[ \frac1{\ell_i \ell_j} \int_{x_{i-1}}^{x_i} \int_{x_{j-1}}^{x_j} V (  x - y ) \, dy dx - \Big( \frac12 V( x_i - x_j ) + \frac12 V( x_{i - 1} - x_{j - 1} ) \Big) \bigg],
\end{multline*}
we use convexity of $V$ to bound the integral for $i \geq j + 2$ by
\begin{equation*}
  \frac1{\ell_i \ell_j} \int_{x_{i-1}}^{x_i} \int_{x_{j-1}}^{x_j} V (  x - y ) \, dy dx
  \leq \frac12 V( x_{i-1} - x_j ) + \frac12 V( x_i - x_{j - 1} ).
\end{equation*}
This yields
\begin{multline} \label{p:QmC:UB}
  Q_n(\bx) - R_n(\bx) 
  \leq \frac1{n^2} \sum_{i=1}^{n-1} \frac1{\ell_{i+1} \ell_i} \int^{x_{i+1}}_{x_i} \int_{x_{i-1}}^{x_i} V (  x - y ) \, dy dx \\
      + \frac1{2 n^2} \bigg( \sum_{i=3}^n \sum_{ j=1 }^{i-2} [V( x_{i-1} - x_j ) + V( x_i - x_{j - 1} )]
  - \sum_{i=2}^n \sum_{ j=1 }^{i-1} [ V( x_i - x_j ) + V( x_{i - 1} - x_{j - 1} ) ] \bigg).
\end{multline}
For the term within parentheses, a change of index readily reveals that the second summation includes all terms of the first summation. We then use $V \geq 0$ to estimate this term from above by $0$. The remaining term in \eqref{p:QmC:UB} can be estimated similarly as in \eqref{p:T1:UB}. This yields $Q_n - R_n \leq 2 D_n$.
\end{proof}

\section{Lower bound on $E_n$}
\label{s:Part3}

In this section we prove \eqref{En:LB}, which is the crucial step in Step 3 of the sketch of the proof of Theorem \ref{t}. More precisely, we assume that $V$ and $\orho$ satisfy \eqref{suppV:suppOrho}, and prove the following proposition.

\begin{prop} [Lower bound on $E_n$] \label{prop:LB:En} 
There exists $C > 0$ such that for all $n \geq 1$ and all $\bx \in \Omega$
\begin{equation*} 
  E_n (\bx) - \NN (\bx) - E ( \orho ) 
  \geq -C n^{-1+a}.
\end{equation*}
\end{prop}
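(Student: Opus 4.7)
The plan follows the renormalisation strategy of \cite[Sec.\ 2]{PetracheSerfaty14}, adapted to the setting of general $V = V_a + \Vreg$ and possibly unbounded $\orho$. The idea is to replace each Dirac mass $\delta_{x_i}$ by a small uniform measure $\mu_i$ supported on $B_i := (x_i - r_i, x_i + r_i)$ with mass $\tfrac{1}{n+1}$ and $r_i := \tfrac{1}{4}\min(\ell_i, \ell_{i+1})$ (with the convention $\ell_0 = \ell_{n+1} := +\infty$), so that the $B_i$ are pairwise disjoint and $f_n := \sum_{i=0}^n \mu_i$ is a probability density. The minimality of $\orho$ (Lemma~\ref{l:orho}) then yields $E(f_n) \geq E(\orho)$, and the task is to translate this inequality into the claimed lower bound on $E_n(\bx) - \NN(\bx) - E(\orho)$.

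Expanding $E(f_n) = \tfrac{1}{2}\sum_i \|\mu_i\|_V^2 + \sum_{i<j}(\mu_i, \mu_j)_V + \int U\, df_n$ and comparing to $E_n(\bx)$, three ingredients are needed. (i) For $i \neq j$, Jensen's inequality (convexity of $V$) combined with a second-order Taylor expansion yields $(\mu_i, \mu_j)_V = V(x_i - x_j)/(n+1)^2 + e_{ij}$ with $0 \leq e_{ij} \leq C(r_i^2 + r_j^2) V''(\xi_{ij})/(n+1)^2$ for some $\xi_{ij}$ within $r_i + r_j$ of $|x_i - x_j|$. (ii) The self-energies $\tfrac12 \sum_i \|\mu_i\|_V^2$ have the same structure as the quantity $D_n$ of \eqref{Dn} at scaled cell-widths $2r_i$, so an argument parallel to Lemma~\ref{l:Dn:by:NN} yields $\tfrac12 \sum_i \|\mu_i\|_V^2 \leq C\,\NN(\bx) + C'/n$. (iii) The confinement discrepancy $\int U\, d(f_n - \sigma_n)$, where $\sigma_n = \tfrac{1}{n+1}\sum \delta_{x_i}$, is non-negative (Jensen, by convexity of $U$) and of order $O(n^{-1+a})$ by a Taylor expansion of $U$ together with the uniform bound $r_i \leq \min(\ell_i, \ell_{i+1})$. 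Assembling these three estimates inside $E(f_n) \geq E(\orho)$ gives a bound of the shape
\begin{equation*}
E_n(\bx) - E(\orho) \geq -\tfrac{1}{2}\sum_i \|\mu_i\|_V^2 - \sum_{i<j} e_{ij} - O(n^{-1+a}).
\end{equation*}

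The main obstacle is the matching of constants: the naive use of step (ii) only yields $\tfrac{1}{2}\sum_i \|\mu_i\|_V^2 \leq C\,\NN(\bx) + C'/n$ with some $C > 1$, which would give $E_n - E(\orho) \geq -C\,\NN(\bx) - O(n^{-1+a})$, falling short of the claimed $E_n - \NN - E(\orho) \geq -O(n^{-1+a})$ as soon as $\NN(\bx)$ is of order one. To close this gap, the plan is either (a) to refine the smearing by using a non-uniform density for $\mu_i$ whose self-energy matches $V(\ell_i)/n^2$ exactly up to $O(n^{-1+a})$, or (b) to absorb the excess into a careful pair-wise accounting of the Taylor remainder $\sum e_{ij}$, so that $\tfrac12 \sum \|\mu_i\|_V^2 + \sum e_{ij} \leq \NN(\bx) + O(n^{-1+a})$ with the correct leading constant. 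The remainder sum is then bounded using the pointwise estimate $V''(r) \lesssim r^{-(2+a)}$ from \eqref{Vpp:LB} together with $\xi_{ij} \geq \tfrac12|x_i - x_j|$: splitting the sum according to $|i - j|$ reduces it to a discrete analogue of $\int_0^1 r^{-a}\, dr$, which converges because $a < 1$ (in the case $a = 0$, a logarithmic factor appears but is harmless as $\overline C / n$ already dominates it). The regular part $\Vreg$, being $C^2$, contributes only $O(1/n)$, and the unboundedness of $\orho$ near the endpoints of $[0,1]$ enters only through the bound on $V*\orho = \overline C - U + \zeta$ (Lemma~\ref{l:orho}\ref{l:orho:EL}), whose non-negativity further ensures that the linear term $\tfrac{1}{n}\sum_i \zeta(x_i)$ arising from $\sigma_n - \orho$ only helps in the lower bound.
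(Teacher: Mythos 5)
Your approach has a fatal sign error at the assembly step that cannot be repaired by sharpening constants. By Jensen (convexity of $V$ on the disjoint supports $B_i$), each cross-term error $e_{ij}$ is nonnegative, and by convexity of $U$ so is the confinement discrepancy; hence the chain $E(f_n) \geq E(\orho)$ combined with your expansion can only yield
\begin{equation*}
  E_n(\bx) - E(\orho) \geq -\Big( \tfrac12\sum_i\|\mu_i\|_V^2 + \sum_{i<j} e_{ij} + O(n^{-1+a}) \Big),
\end{equation*}
where the bracketed quantity is nonnegative. The Proposition asserts $E_n(\bx) - E(\orho) \geq \NN(\bx) - Cn^{-1+a}$, and $\NN(\bx)$ is nonnegative and unbounded over $\Omega$; no nonnegative bracket can dominate $-\NN(\bx)$. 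You recognise that your plan falls short, but your fixes (a) and (b), which aim to make the bracket $\leq \NN(\bx) + O(n^{-1+a})$, would still only give $E_n(\bx) + \NN(\bx) - E(\orho) \geq -Cn^{-1+a}$, which carries $\NN(\bx)$ with the wrong sign and is a strictly weaker statement than the one to be proved. The obstruction is directional, not a matching of constants: smearing Dirac masses raises the interaction energy, so $E(f_n) \geq E(\orho)$ can only bound $E_n(\bx)$ from below by $E(\orho)$ \emph{minus} something nonnegative; it can never produce the required \emph{plus} $\NN(\bx)$.

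The paper extracts the $-\NN(\bx)$ from a different source: the diagonal and nearest-neighbour pairs are excised from the Dirac quadratic form at the outset. Setting $\mu_n = \frac1n\sum\delta_{x_i}$, $\nu_n = \mu_n - \orho$ and $\Delta_1 = \{(x_i,x_j):|i-j|\leq 1\}$, the Euler--Lagrange relation in Lemma~\ref{l:orho}\ref{l:orho:EL} gives
\begin{equation*}
  E_n(\bx) - \NN(\bx) - E(\orho) \geq \frac12\iint_{\Delta_1^c} V(x-y)\,d\nu_n(y)\,d\nu_n(x),
\end{equation*}
so $-\NN$ is built in via the excised domain rather than reconstructed from a smearing self-energy. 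The renormalisation then acts on the \emph{potential}, not the measure: splitting $V = V_n + (V - V_n)$ with $V_n$ the affine truncation of \eqref{Vn}, the $V_n$-part is handled by positivity of $\|\nu_n\|_{V_n}^2$ together with the bound $V_n(0) \leq Cn^a$ on the re-inserted near-diagonal terms, while the $(V - V_n)$-part, supported in $[-1/n,1/n]$, is controlled by a localized $i$-th-neighbour convexity argument and Lemma~\ref{l:VVnastrho}. If you still wish to use smeared measures in the spirit of \cite{PetracheSerfaty14}, the smearing must be deployed to bound this renormalised quadratic form in $\nu_n$, not to compare $E(f_n)$ against $E_n(\bx)$.
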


We give the proof of Proposition \ref{prop:LB:En} after a preliminary construction of a renormalised norm of $\| \cdot \|_V$. With this aim, we introduce
\begin{equation} \label{Vn}
  V_n (r) := \left\{ \begin{aligned}
    & V(\tfrac1n) + (r - \tfrac1n) V'(\tfrac1n)
    && \text{if } 0 \leq r \leq \tfrac1n \\
    & V(r)
    && \text{if } r > \tfrac1n
  \end{aligned} \right.
\end{equation}
with even extension to the negative half-line. Figure \ref{fig:V:puntmuts} illustrates a typical example of $V$ and $V_n$. Lemma \ref{l:Vn} lists several basic properties of $V_n$.

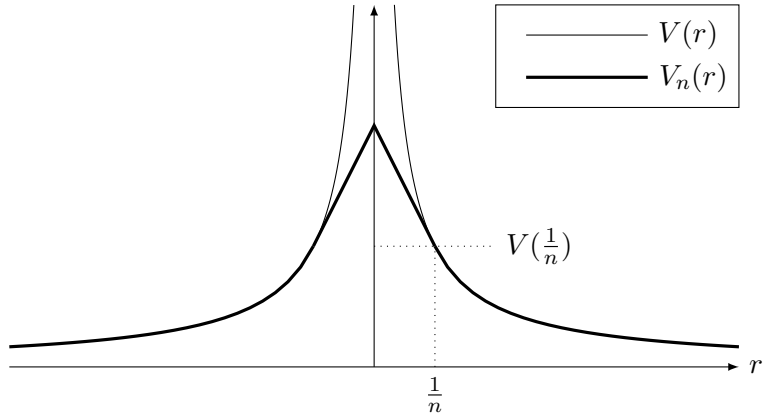
\begin{figure}[ht]
\centering
\begin{tikzpicture}[scale=0.8, >= latex]
\draw[dotted] (1,0) node[below] {$\tfrac1n$} -- (1,2);
\draw[dotted] (0,2) --++ (2,0) node[right] {$V(\tfrac1n)$};             
\draw[->] (-6,0) -- (6,0) node[right] {$r$};
\draw[->] (0,0) -- (0,6);
\draw[very thick] (-1,2) -- (0,4) -- (1,2);
\draw[very thick, domain=-6:-1] plot (\x,{-2/\x});
\draw[domain=-1:-0.3333] plot (\x,{-2/\x});
\draw[domain=0.3333:1] plot (\x,{2/\x});
\draw[very thick, domain=1:6] plot (\x,{2/\x});
\draw (2,4.3) rectangle (6,6);
\draw (2.5,5.5) -- (4.5,5.5) node[right] {$V(r)$};
\draw[very thick] (2.5,4.8) -- (4.5,4.8) node[right] {$V_n(r)$};
\end{tikzpicture} \\
\caption{The piecewise-affine regularisation $V_n$ of the interaction potential $V$.}
\label{fig:V:puntmuts}
\end{figure}

\begin{lem} [Properties of $V_n$] \label{l:Vn} 
There exists a constant $C > 0$ such that for all $n \geq 1$:
\begin{enumerate} [label=(\roman*)]
  \item \label{l:Vn:decr} $V_n$ is non-increasing on $[0,\infty)$;
  \item $V_n$ and $V - V_n$ are convex on $(0,\infty)$;
  \item \label{l:Vn:supp} $\supp (V - V_n) \subset [-\tfrac1n, \tfrac1n]$;
  \item \label{l:Vn:0} $V_n(0) \leq C n^{a}$;
  \item \label{l:Vn:norm} For $f \in L^2(\R)$, $\displaystyle \| f \|_{V_n} := \sqrt{ \text{$\int_\R$} (V_n * f) f }$ defines a semi-norm;
  \item \label{l:Vn:Lp} $V_n \uparrow V$ in $L^p(\R)$ as $n \to \infty$ for any $1 \leq p < \frac1a$. 
\end{enumerate}
\end{lem}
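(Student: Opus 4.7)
The lemma asks for six properties of $V_n$; items (i)--(iv) and (vi) are direct consequences of the piecewise definition of $V_n$ and the decomposition $V = V_a + \Vreg$, while item (v) is the only one requiring genuine input.

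\emph{Items (i)--(iv).} Since $V$ is even, convex on $(0,\infty)$, and tends to $0$ at infinity, it is non-increasing on $[0,\infty)$, so $V'(1/n) \leq 0$; this makes the affine branch of $V_n$ on $[0,1/n]$ non-increasing, giving (i). For (ii), both pieces of $V_n$ are convex and their left/right derivatives at $r = 1/n$ both equal $V'(1/n)$, yielding convexity of $V_n$; for $V - V_n$, on $[0,1/n]$ it equals $V$ minus its tangent at $1/n$, hence non-negative and convex by convexity of $V$, while it vanishes on $[1/n,\infty)$ with matching value and derivative zero at $r = 1/n$. Item (iii) is immediate from the construction. For (iv), direct computation yields $V_n(0) = V(1/n) - \tfrac1n V'(1/n)$; the regular part $\Vreg \in C^2$ contributes an $O(1)$ term, while the singular part gives $(1+a) n^a$ when $a > 0$ (respectively $\log n + 1$ when $a = 0$).

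\emph{Item (v).} Since $V_n$ is bounded by (iv) and compactly supported ($\supp V_n \subset \supp V$, which is compact by \eqref{suppV:cp}), one has $V_n \in L^1(\R)$. Plancherel then gives
\[
  \int_\R (V_n * f) f \, dx = \int_\R \widehat{V_n}(\omega) \, |\widehat f(\omega)|^2 \, d\omega
\]
for all $f \in L^2(\R)$, so the semi-norm property reduces to proving $\widehat{V_n} \geq 0$. I would invoke Pólya's criterion: $V_n$ is continuous, even, non-negative, non-increasing on $[0,\infty)$ by (i), convex on $[0,\infty)$ by (ii), with $V_n(0) < \infty$ and $\lim_{|x| \to \infty} V_n(x) = 0$. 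Hence $V_n/V_n(0)$ is the characteristic function of a symmetric probability distribution, and $\widehat{V_n}$ is therefore a non-negative multiple of the corresponding density. The semi-norm axioms (positive homogeneity, triangle inequality via Cauchy--Schwarz) then follow from positive semi-definiteness.

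\emph{Item (vi) and the main obstacle.} For any fixed $x \neq 0$, $V_n(x) = V(x)$ as soon as $n \geq 1/|x|$. The monotonicity $V_n \leq V_{n+1}$ on $[-1/n, 1/n]$ is checked by comparing the two tangent lines $L_n, L_m$ of $V$ at $1/n$ and $1/m$ with $n \geq m$: the endpoint inequalities $L_n(1/n) = V(1/n) \geq L_m(1/n)$ and $L_n(0) \geq L_m(0)$ follow from convexity of $V$, and since both lines are affine, $L_n \geq L_m$ throughout $[0, 1/n]$. For $L^p$ convergence, (iii) restricts the integration to $[-1/n, 1/n]$, where $|V - V_n| \lesssim |r|^{-a}$, giving $\|V - V_n\|_{L^p}^p \lesssim \int_0^{1/n} r^{-ap}\, dr \lesssim n^{-(1-ap)}$, which vanishes precisely when $p < 1/a$. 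The only step requiring any real insight is $\widehat{V_n} \geq 0$ in (v); one cannot simply transfer the known positivity of $\widehat V$ from \cite{KimuraVanMeurs19acc}, because $V_n$ modifies $V$ on $[-1/n, 1/n]$ in a way that could in principle destroy positive-definiteness. Pólya's criterion settles the point cleanly; an equivalent constructive alternative expresses $V_n$ as a non-negative mixture of tent functions via its distributional second derivative (non-negative by convexity), using that each tent function has a squared-sinc Fourier transform.
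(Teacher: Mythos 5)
Your proof is correct and takes essentially the same route as the paper. The paper dispatches items (i)--(iv) and (vi) in a single sentence as "direct consequences" and, for item (v), cites \cite[Lems.\ 3.1 and 3.2]{KimuraVanMeurs19acc} for the key fact that $\widehat{V_n} \geq 0$ follows from the structural properties of $V_n$ (even, non-increasing and convex on $(0,\infty)$, compactly supported, non-negative). You supply that argument explicitly via P\'olya's criterion, which is exactly the mechanism those cited lemmas encode, and you also give the equivalent tent-function decomposition; your observation that positivity of $\widehat V$ does not simply transfer to $V_n$ is a correct and worthwhile remark that the paper leaves implicit.

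One small inaccuracy: in item (v) you write that $V_n$ is "convex on $[0,\infty)$ by (ii)." Item (ii) only gives convexity on $(0,\infty)$, and in fact $V_n$ is generally \emph{not} convex at the origin: the even extension of the affine branch with slope $V'(\tfrac1n) < 0$ produces a downward (concave) kink at $0$. Fortunately P\'olya's criterion requires convexity only for $t > 0$, which is exactly what you proved in (ii), so the argument still goes through — but the premise as stated is false and should be corrected to "convex on $(0,\infty)$."
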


\begin{proof}
Except for \ref{l:Vn:norm}, all properties are a direct consequence of the assumptions and properties of $V$ and the definition of $V_n$ in \eqref{Vn}. Property \ref{l:Vn:norm} can be proven along the lines of \cite[Lem.\ 3.2]{KimuraVanMeurs19acc}; it relies on the Fourier-transform of $V_n$ being non-negative, which easily follows from the other properties of $V_n$ (see \cite[Lem.\ 3.1]{KimuraVanMeurs19acc} for details).
\end{proof}

Next we establish an auxiliary estimate on $(V - V_n) * \orho$.

\begin{lem} \label{l:VVnastrho} 
There exists $C > 0$ such that for all $n \geq 1$ and all $0 \leq x \leq \frac12$
\begin{equation*}
  ((V - V_n) * \orho)(x) + ((V - V_n) * \orho)(1-x)
  \leq C n^{-\tfrac{1-a}2} \min \left\{ 1, [nx - 1]_+^{-\tfrac{1-a}2} \right\}.
\end{equation*}
\end{lem}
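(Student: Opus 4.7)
The plan is to combine three ingredients: the support localization $\supp(V-V_n)\subset[-1/n,1/n]$ from Lemma \ref{l:Vn}\ref{l:Vn:supp}; the pointwise decomposition $0\leq V-V_n\leq V$ on that interval (from the tangent-line construction and convexity of $V$); and the sharp upper bound $\orho(t)\leq C[t(1-t)]^{-(1-a)/2}$ from Lemma \ref{l:orho}\ref{l:orho:UB}. Since this pointwise bound is symmetric under $t\mapsto 1-t$, writing $\tilde\orho(t):=\orho(1-t)$ and using the evenness of $V-V_n$, one has $((V-V_n)*\orho)(1-x)=((V-V_n)*\tilde\orho)(x)$, so it suffices to prove the claimed bound for $((V-V_n)*\rho)(x)$ where $\rho$ is any density satisfying $\rho(t)\leq C[t(1-t)]^{-(1-a)/2}$ on $[0,1]$, and then apply it twice.

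I would split into two regimes. When $nx>2$, one has $x-y\geq x/2$ for all $y\in[-1/n,1/n]$, so $\rho(x-y)\leq C x^{-(1-a)/2}$ factors out of the convolution, leaving the auxiliary integral $\int_{-1/n}^{1/n}(V-V_n)(y)\,dy$. A direct computation from the definition \eqref{Vn} gives $\int_{-1/n}^{1/n}(V-V_n)(y)\,dy\leq Cn^{-(1-a)}$ uniformly in $a\in[0,1)$; notably, the tangent construction cancels what would otherwise be a $\log n$ factor when $a=0$. The resulting bound $Cx^{-(1-a)/2}n^{-(1-a)}=Cn^{-(1-a)/2}(nx)^{-(1-a)/2}$ converts to $C'n^{-(1-a)/2}[nx-1]_+^{-(1-a)/2}$ using $nx-1\geq nx/2$.

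When $nx\leq 2$, the target is $Cn^{-(1-a)/2}$. Here $\rho(x-y)\leq C(x-y)^{-(1-a)/2}$ (the argument $x-y$ lies in $[0,3/n]\subset[0,1/2]$ for $n$ large; the finite set of small $n$ is absorbed into the constant), so the convolution is bounded by $C\int(V-V_n)(y)(x-y)^{-(1-a)/2}\,dy$. I split this at $y=0$: on $y<0$, the inequality $x-y\geq|y|$ reduces the integrand to $(V-V_n)(y)|y|^{-(1-a)/2}$, whose integral is $O(n^{-(1-a)/2})$ by direct evaluation (the substitution $s=n|y|$ handles the $a=0$ case without a spurious $\log n$ factor); on $y\in(0,x\wedge 1/n)$, the rescaling $y=(x\wedge 1/n)u$ recasts the integrand into $u^{-a}(1-u)^{-(1-a)/2}$ with upper limit $\leq 1$, giving a finite Beta-type integral and hence $O((x\wedge 1/n)^{(1-a)/2})=O(n^{-(1-a)/2})$.

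The main obstacle is the critical scale $nx\sim 1$, where the singularity of $V-V_n$ and the left-endpoint singularity of $\rho$ compete on the same scale $1/n$ inside the integration window. The rescaling $y=(x\wedge 1/n)u$ is essential here: it normalises the two competing scales into a single $u$-variable and turns the delicate positive-$y$ contribution into a universal Beta constant $B(1-a,(1+a)/2)$. Working directly with $V-V_n$ rather than its crude upper bound $V$ is crucial in the $a=0$ case to prevent a logarithmic loss, both in the integral $\int(V-V_n)$ for the regime $nx>2$ and in the weighted integral on $y<0$ for the regime $nx\leq 2$.
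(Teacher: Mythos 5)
Your proof is correct, and it is a genuinely different argument from the paper's. Both proofs begin the same way — use $\supp(V-V_n)\subset[-\tfrac1n,\tfrac1n]$ to localise the convolution, use the density bound $\orho(t)\leq C[t(1-t)]^{-(1-a)/2}$, and in the far regime $nx\gtrsim 1$ pull the $\orho$-factor out and integrate the remaining kernel. The difference is entirely in how the near regime is handled. The paper replaces $V-V_n$ by the crude pointwise bound $V-V_n\leq V\leq C|r|^{-a}$ and then applies H\"older's inequality with an exponent $p$ chosen in the open interval making both $|y|^{-ap}$ and $|y|^{-\tfrac{1-a}2q}$ integrable near $0$ (this is exactly what forces $p\in(\tfrac2{1+a},\tfrac1a)$, which exists whenever $a<1$). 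You instead split the integration variable at $y=0$: on $y<0$ you exploit $x-y\geq|y|$ to decouple the two singularities, and on $y>0$ you rescale by $x\wedge\tfrac1n$ to turn the competing $1/n$- and $x$-scales into a single Beta-type integral. Both routes give the same power $n^{-(1-a)/2}$ with constants that degenerate as $a\uparrow 1$ (H\"older's exponent window closes, your Beta function $B(1-a,\tfrac{1+a}2)$ blows up), so neither has an edge there. What your approach buys is that it naturally invites working with the actual function $V-V_n$ rather than its upper bound $V$; as you note, this matters for $a=0$, where $\int_{-1/n}^{1/n}(V-V_n)=O(n^{-1})$ (the tangent-line subtraction cancels the $\log n$ that $\int_{-1/n}^{1/n}V$ would produce) and the weighted integral $\int_0^{1/n}(V-V_n)(s)s^{-1/2}\,ds=O(n^{-1/2})$ is likewise $\log$-free. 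The paper's H\"older route cannot see this cancellation (it discards $V_n$ at the first step), which is why its Table~1 records an extra $\log n$ factor in this lemma for $a=0$.

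Two minor points of care. First, for the positive-$y$ part in the near regime, the substitution $y=(x\wedge\tfrac1n)u$ gives a clean Beta integrand $u^{-a}(1-u)^{-(1-a)/2}$ only when $x\leq\tfrac1n$; when $\tfrac1n<x\leq\tfrac2n$ you get $(u/n)^{-a}(x-u/n)^{-(1-a)/2}\tfrac1n\,du$, and you need the extra observation $x-u/n\geq\tfrac1n(1-u)$ to domin\-ate it by the same Beta integral. This works, but it is not literally what the rescaling delivers. Second, your claim to avoid the $\log n$ at $a=0$ is spelled out for the far-regime integral and the $y<0$ near-regime piece, but not for the $y>0$ near-regime piece; there one has to expand $(V_0-(V_0)_n)(y)=-\log(ny)+ny-1$ and check that the $-\log(nx)$ term that appears after rescaling is tamed by the prefactor $x^{1/2}$ (since $v^{1/2}|\log v|$ is bounded on $(0,1]$). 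This goes through, but it deserves a sentence. Both points are cosmetic; the argument is sound.
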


\begin{proof}
Take any $0 \leq x \leq \frac12$, and set $\underline x := (x- \frac1n) \vee 0$. By Lemma \ref{l:Vn}\eqref{l:Vn:supp} 
\begin{align*}
  ((V - V_n) * \orho)(x)
  = \int_{\underline x}^{x + \frac1n} (V - V_n) (x-y) \, \orho(y) \, dy.
\end{align*}
Since $V_n \geq 0$, we have that $(V - V_n)(r) \leq V(r) \leq C/|r|^a$ for all $|r| \leq 1$. Together with the upper bound on $\orho$ in Lemmas \ref{l:orho}\ref{l:orho:UB} and $\bar x \geq x - \frac1n$, we continue the estimate by
\begin{equation} \label{pf:VVn}
  \int_{\underline x}^{x + \frac1n} (V - V_n) (x-y) \, \orho(y) \, dy
  \leq C \int_{x - \frac1n}^{x + \frac1n} |x-y|^{-a} |y|^{-\tfrac{1-a}2} \, dy.
\end{equation}
Noting that $a > \frac2{1+a}$, we apply H\"older's inequality with $p \in (\frac2{1+a}, a)$ and conjugate exponent $q = p/(p-1)$. This yields
\[
  \int_{x - \frac1n}^{x + \frac1n} |x-y|^{-a} |y|^{-\tfrac{1-a}2} \, dy
  \leq \bigg( \int_{ - \frac1n}^{\frac1n} |y|^{-ap} \, dy \bigg)^{1/p} \bigg( \int_{x - \frac1n}^{x + \frac1n} |y|^{-\tfrac{1-a}2q} \, dy \bigg)^{1/q}.
\]
Since $ap < 1$ and $\frac{1-a}2 q < 1$ by construction, the right-hand side is finite. Noting that the right-hand side is maximal at $x=0$, we obtain
\[
  \bigg( \int_{ - \frac1n}^{\frac1n} |y|^{-ap} \, dy \bigg)^{1/p} \bigg( \int_{ - \frac1n}^{\frac1n} |y|^{-\tfrac{1-a}2q} \, dy \bigg)^{1/q}
  = C \Big( \frac1n \Big)^{\tfrac1p - a} \Big( \frac1n \Big)^{\tfrac1q - \tfrac{1-a}2}
  = C n^{- \tfrac{1-a}2}.
\]
In conclusion, the estimates above yield that
\[
  ((V - V_n) * \orho)(x) \leq C n^{- \tfrac{1-a}2}
\]
for all $0 \leq x \leq \frac12$.

To sharpen the bound for $\frac2n \leq x \leq \frac12$, we follow the estimate above until \eqref{pf:VVn}, and continue as follows:
\begin{align*}
  ((V - V_n) * \orho)(x)
  &\leq C \int_{x - \frac1n}^{x + \frac1n} |x-y|^{-a} |y|^{-\tfrac{1-a}2} \, dy \\
  &\leq C (x - \tfrac1n)^{-\tfrac{1-a}2} \int_{x - \frac1n}^{x + \frac1n} |x-y|^{-a} \, dy  
  \leq C' n^{-1+a} (x - \tfrac1n)^{-\tfrac{1-a}2}.
\end{align*}
The estimate for $((V - V_n) * \orho)(1-x)$ is analogous.
\end{proof}

\begin{proof}[Proof of Proposition \ref{prop:LB:En}]
The assertion of Proposition \ref{prop:LB:En} is obvious when $n$ is bounded from above by a fixed integer $N \geq 1$. In this case, it suffices to take $C = N^{1-a} E(\orho)$. Therefore, it is not restrictive to assume that $n \geq 1/b$, 
where $b > 0$ is as in \eqref{Vpp:LB}.

Let $\bx \in \Omega$ be given, and set 
\begin{equation*} 
  \mu_n := \frac1n \sum_{i=0}^n \delta_{x_i}
  \quad \text{and} \quad
  \nu_n := \mu_n - \orho.
\end{equation*}
Let
\begin{equation*}
  \Delta_1 := \{ (x_i, x_j) : |i-j| \leq 1 \} \subset \R^2
\end{equation*}
be the particle pairs that are left out in the interaction term of $E_n (\bx) - \NN (\bx)$, i.e.
\begin{equation*}
  E_n (\bx) - \NN (\bx)
  = \frac12 \iint_{\Delta_1^c} V(x-y) \, d\mu_n(y) d\mu_n(x) + \int U \, d\mu_n.
\end{equation*}
Then, we use Lemma \ref{l:orho}\ref{l:orho:EL} to estimate
\begin{align*} 
  &E_n (\bx) - \NN (\bx) - E ( \orho ) \\
  &= \frac12 \iint_{\Delta_1^c} V(x-y) \, d\nu_n(y) d\nu_n(x) 
    + \int (V * \orho) d\nu_n
    + \int U \, d\nu_n \\
  &\geq \frac12 \iint_{\Delta_1^c} V(x-y) \, d\nu_n(y) d\nu_n(x) \\
  &= \frac12 \underbrace{ \iint_{\Delta_1^c} V_n(x-y) \, d\nu_n(y) d\nu_n(x) }_{\Th} 
    + \frac12 \underbrace{ \iint_{\Delta_1^c} (V - V_n) (x-y) \, d\nu_n(y) d\nu_n(x) }_{\Ti},
\end{align*}
where $V_n$ is the regularisation introduced in \eqref{Vn}. To bound $\Th$, we use that $\max_\R V_n = V_n(0) \leq C n^a$ by Lemma \ref{l:Vn}\ref{l:Vn:decr},\ref{l:Vn:0} and that $\| \cdot \|_{V_n}^2$ is a norm (see Lemma \ref{l:Vn}\ref{l:Vn:norm}). Then, by the definition of $\Delta_1$, this yields
\begin{align*}
  \Th 
  = \| \nu_n \|_{V_n}^2 - \frac{n+1}{n^2} V_n(0) - \frac2{n^2} \sum_{i=1}^n V_n(x_i - x_{i-1})
  \geq - C n^{-1+a}.
\end{align*}

It remains to bound $\Ti$ from below by $-C n^{-1+a}$. We expand $\nu_n = \mu_n - \orho$ to rewrite
\begin{equation} \label{T2:bd1}
  \Ti 
  = \frac2{n^2} \sum_{i=2}^n \sum_{ j=0 }^{n-i} (V - V_n)(  x_{j+i} -  x_j ) 
       - 2\int \big((V - V_n) * \orho \big) d\mu_n
       + \iint_{\R^2} (V - V_n) (x-y) \, d\orho(y) d\orho(x).
\end{equation}
The third term is non-negative; we bound it from below by $0$. For the first two terms, we assume for convenience that $n$ is a multiple of $4$, and partition the interval $[0,1]$ into the closed intervals $I_k := [2 \frac{k-1}n, 2\frac kn]$ where $k = 1,\ldots,\frac n2$. Note that these intervals only overlap at their endpoints. Then, we remove the contribution of the interaction between any two particles located in different intervals $I_k$ from the double sum in the right-hand side of \eqref{T2:bd1}. Finally, we minimise the right-hand side of \eqref{T2:bd1} over each $I_k$ separately, and relax the constraint that the total number of particles should be $n+1$. This yields
\begin{equation} \label{T2:bd2}
  \Ti  
  \geq 2 \sum_{k=1}^{n/2} \min_{N \in \N} \bigg( \underbrace{ \frac1{n^2} \min_{2 \frac{k-1}n \leq y_1 \leq y_2 \leq \ldots \leq y_N \leq 2\frac kn} \sum_{i=2}^{ N } \sum_{ j= {1} }^{ {N}-i } (V - V_n)(  y_{j+i} -  y_j ) }_{ {T_{10}} } 
       - \frac Nn \| (V - V_n) * \orho \|_{C(I_k)} \bigg).
\end{equation}
We treat both terms within the parentheses separately. For the second term, we apply the bound in Lemma \ref{l:VVnastrho}. Since this bound gives the same estimate for the intervals $I_k$ and $I_{n/2 - k + 1}$, we focus on bounding it for $k \leq n/4$. This yields
\begin{align*} 
  \frac Nn \| (V - V_n) * \orho \|_{C(I_k)}
  &\leq C \frac Nn n^{-\tfrac{1-a}2} \min \left\{ 1, [2(k-1) - 1]_+^{-\tfrac{1-a}2} \right\} \\
  &\leq C' N n^{-1 - \tfrac{1-a}2} \left\{ \begin{array}{ll}
    1
    & k=1 \\
    (k-1)^{-\tfrac{1-a}2}
    & k \geq 2.
  \end{array} \right.
\end{align*} 

In particular, if the minimum over $N$ in \eqref{T2:bd2} is reached below an $n$-independent value $C$ (i.e., $N \leq C$), then it suffices to bound the first term in parentheses in \eqref{T2:bd2} from below simply by $0$. Therefore, we assume next that the minimiser $N$ is sufficiently large; in particular $N \geq 9$. We further assume for simplicity that $N$ is a multiple of $3$.

To bound the term $T_{10}$ in \eqref{T2:bd2}, we rely on the basic arguments in the theory of $i$-th neighbour interaction energies with convex interaction potentials. In more detail, first we use that the summands are non-negative to remove the latter terms in the sum over $i$:
\begin{equation*}
  T_{10} \geq \frac1{n^2} \min_{2 \frac{k-1}n \leq y_1 \leq y_2 \leq \ldots \leq y_N \leq 2\frac kn} \sum_{i=2}^{ N/3 } \sum_{ j=1 }^{ {N}-i } (V - V_n)(  y_{j+i} -  y_j ).
\end{equation*}
Then, we bound the minimum from below by exchanging the sum over $i$ with the minimisation over $(y_j)_j$, i.e.,
\begin{equation*}
  T_{10} \geq \frac1{n^2} \sum_{i=2}^{ N/3 } \min_{2 \frac{k-1}n \leq y_1 \leq y_2 \leq \ldots \leq y_N \leq 2\frac kn} \sum_{ j=1 }^{ {N}-i } (V - V_n)(  y_{j+i} -  y_j ) =: T_{11}.
\end{equation*}
Then, the resulting minimisation problem can be written as a sum over independent minimisation problems. To see this, we change variables in the summation index by $j = (m-1)i + \ell$, and write the sum over $j$ as two sums over $\ell$ and $m$. By possibly skipping a few terms for those $j$ that are close to $N-i$, we obtain
\begin{align*}
  T_{11} 
  &\geq \frac1{n^2} \sum_{i=2}^{ N/3 } \min_{2 \frac{k-1}n \leq y_1 \leq y_2 \leq \ldots \leq y_N \leq 2\frac kn} \sum_{ \ell=1 }^{ i } \sum_{m=1}^{ \lfloor  \frac{N-\ell}{i} \rfloor } (V - V_n)(  y_{mi + \ell} -  y_{(m-1)i + \ell} ) \\
  &\geq \frac1{n^2} \sum_{i=2}^{ N/3 } \sum_{ \ell=1 }^{ i } \min_{2 \frac{k-1}n \leq y_1 \leq y_2 \leq \ldots \leq y_N \leq 2\frac kn} \sum_{m=1}^{ \lfloor  \frac{N-\ell}{i} \rfloor } (V - V_n)(  y_{mi + \ell} -  y_{(m-1)i + \ell} ) \\
  &= \frac1{n^2} \sum_{i=2}^{ N/3 } \sum_{ \ell=1 }^{ i } \min_{2 \frac{k-1}n \leq y_{\ell} \leq y_{i + \ell} \leq y_{2i + \ell} \leq \ldots \leq y_{\lfloor  \frac{N-\ell}{i} \rfloor i + \ell} \leq 2\frac kn} \sum_{m=1}^{ \lfloor  \frac{N-\ell}{i} \rfloor } (V - V_n)(  y_{mi + \ell} -  y_{(m-1)i + \ell} ) \\
  &= \frac1{n^2} \sum_{i=2}^{ N/3 } \sum_{ \ell=1 }^{ i } \min_{2 \frac{k-1}n \leq z_0 \leq z_1 \leq \ldots \leq z_{\lfloor  \frac{N-\ell}{i} \rfloor} \leq 2\frac kn} \sum_{m=1}^{ \lfloor  \frac{N-\ell}{i} \rfloor } (V - V_n)(  z_{m} -  z_{m-1} ),
\end{align*}
where in the last equality we change to the variable $z_m := y_{mi + \ell}$.
Each such minimisation problem over $z_m$ involves only nearest neighbour interactions with the convex, repelling interaction potential $V - V_n$, which is minimised by the equispaced configuration. Plugging in the equispaced configuration $z_m = 2 \frac{k-1}n + 2 m/(n \lfloor  \frac{N-\ell}{i} \rfloor )$, we get
\begin{align} \notag
  T_{11}
  &\geq \frac1{n^2} \sum_{i=2}^{{N/3}} \sum_{\ell = 1}^i \lfloor \tfrac{N-\ell}i \rfloor (V - V_n) \bigg(  \frac2{ n \lfloor \tfrac{N-\ell}i \rfloor } \bigg) 
    \geq \frac1{n^2} \sum_{i=2}^{N/3} \sum_{\ell = 1}^i \Big( \frac23 \frac{N}i - 1 \Big) (V - V_n) \Big(  \frac{3i}{nN} \Big) \\\notag
    &= \frac1{n^2} \sum_{i=2}^{N/3} ( \tfrac23 N - i ) (V - V_n) \Big(  \frac{3i}{nN} \Big) 
    \geq \frac{N}{3 n^2} \sum_{i=2}^{N/3} (V - V_n) \Big(  \frac{3i}{nN} \Big) \\ \label{T2:bd3}
    &\geq \frac{N^2}{9 n} \int_{\frac{6}{nN}}^{\frac1n} (V - V_n) (x) \, dx
    = \frac{N^2}{9 n^2} \int_{\frac{6}{N}}^{1} (V - V_n) (\tfrac xn) \, dx,
\end{align}
where in the last inequality we have recognized the sum as a Riemann upper-sum. To estimate the integrand from below, we integrate twice, use that $(V - V_n)(\frac1n) = 0$ and rely on $V_n'' = 0$ on $(0, \frac1n)$ and the lower bound on $V''$ on $(0,b) \supset (0,\frac1n)$ in \eqref{Vpp:LB} to deduce that
\begin{align*}
  (V - V_n) \Big( \frac xn \Big)
  = \int_{\frac xn}^{\frac1n} \int_y^{\frac1n} V''(z) \, dz dy
  \geq C \int_{\frac xn}^{\frac1n} \int_y^{\frac1n} z^{-2-a} \, dz dy
  = C n^a \int_x^1 \int_y^1 z^{-2-a} \, dz dy
\end{align*}
for all $0 < x < 1$. The double integral in the right-hand side is independent of $n$, positive, and decreasing as a function of $x$. Using this and noting that $6/N \leq 2/3$, we continue the estimate in \eqref{T2:bd3} by
\begin{align*}
 \frac{N^2}{9 n^2} \int_{\frac{6}{N}}^{1} (V - V_n) (\tfrac xn) \, dx
 \geq C N^2 n^{-2+a} \int_{\frac23}^{1} \int_x^1 \int_y^1 z^{-2-a} \, dz dy dx 
 \geq C' N^2 n^{-2+a}.
\end{align*}

Finally, collecting our estimates in \eqref{T2:bd2}, we obtain two constants $C, C' > 0$ such that
\begin{align*}
  \Ti 
  &\geq C' \sum_{k=1}^{n/2} \min_{N \in \R} \Big( N^2 n^{-2+a} - C k^{-\tfrac{1-a}2} N n^{-1 - \tfrac{1-a}2} \Big) \\
  &= C' n^{-2+a} \sum_{k=1}^{n/2} \min_{N \in \R} N \bigg( N - C \Big(\frac kn \Big)^{-\tfrac{1-a}2} \bigg) \\
  &= - C' n^{-2+a} \frac{C^2}4 \sum_{k=1}^{n/2} \Big(\frac kn \Big)^{-1+a} 
  \geq - C n^{-1+a}.
\end{align*}
\end{proof}

\begin{rem}[The case $\orho \leq C$] \label{r:prop:LB:En}
 When $\orho$ is bounded, the proof of Proposition \ref{prop:LB:En} simplifies significantly. Indeed, if $\orho$ is bounded, then instead of Lemma \ref{l:VVnastrho} the rougher estimate $\| (V - V_n) * \orho \|_{C([0,1])} \leq C n^{-1+a}$ is sufficient, because this estimate gives immediately the desired bound on the second term in \eqref{T2:bd1}.
\end{rem}

\section{Proof of Theorem \ref{t}}
\label{s:a0}

We first treat the case $0 < a < 1$. Given $V$ and $U$ as in Theorem \ref{t}, we start by constructing a more convenient interaction potential $\tilde V$ whose corresponding energies $\tilde E$ and $\tilde E_n$ have the same sets of minimisers as $E$ and $E_n$ respectively. Let $R$ and $S$ be as in Proposition \ref{prop:VR}. Let $\tilde V$ satisfy Assumption \ref{ass:VU} such that
\begin{equation*}
  \tilde V = V \text{ on } \big( 0, \max \{2S, R\} \big]
  \quad \text{and} \quad
  \tilde V = C \text{ on } \big[ \max \{2S, R\} + 1, \infty \big)
\end{equation*}
for some constant $C \in \R$. Such a $\tilde V$ can be obtained by multiplying $V'(x)$ with a cut-off function, and then integrating from $x$ to $\infty$. From the integrability and convexity of $\tilde V$ we note that $C = \min_\R \tilde V$. Then, Proposition \ref{prop:VR} applies to $\tilde V$ with the same constants $R$ and $S$, and thus any minimiser of 
\begin{equation*}
  \tilde E (\rho) := \frac12 \int_\R (\tilde V * \rho) \, d\rho + \int_\R U \, d\rho
\end{equation*}
is also supported in $[-S,S]$. Since by the choice of $\tilde V$ it holds that $\tilde E = E$ on $\cP ([-S,S])$, any minimiser of $E$ is a minimiser of $\tilde E$ and vice versa. Analogously, we obtain the same conclusion for $E_n$ for a possibly different constant $S$. Hence, we may replace $V$ in Theorem \ref{t} by $\tilde V$. In addition, we may further subtract the constant $C/2 = \min_\R \tilde V / 2$ from $\tilde E$ and subtract the constant $(1 + \frac1n)C/2$ from $\tilde E_n$ such that the resulting interaction potential
has compact support. We denote this potential by $\tilde V$ without changing notation.

The existence of minimisers of $E_n$ is shown in Remark \ref{r:En:minz:ex}. Since $\tilde V$ has compact support, Lemma \ref{l:orho} applies. This shows that $\tilde E$ has a unique minimiser $\tilde \rho$. Moreover, by Lemma \ref{l:orho}\ref{l:orho:supp} there exists an affine change of variables such that $\supp \tilde \rho$ turns into the interval $[0,1]$. It is not difficult to verify (see \cite[Step 1 in the proof of Lem.~6.8]{KimuraVanMeurs19acc}) that under this change of variables, $E$ and $E_n$ (possibly multiplied or shifted by a constant) still satisfy Assumption \ref{ass:VU}, and that the resulting Sobolev norm \eqref{Hs} is equivalent to that before the change of variables. Hence, redefining $V, U, E, E_n, \orho, \tx$ as the potentials, energies and minimisers that appear after the affine change of variables has been applied, we observe that $V,U$ satisfy Assumption \ref{ass:VU}, that the additional property \eqref{suppV:suppOrho} holds, and that $\| \tphi - \overline \rho \|_{H^{-(1-a)/2} (\R)}^2$ has changed by an $n$-independent multiplicative constant with respect to the original value.

The remaining part of the proof concerns the estimate of $\| \tphi - \overline \rho \|_{H^{-(1-a)/2} (\R)}^2$. This part is already sufficiently detailed in the sketch of the proof given in Section \ref{s:intro:t}; see \eqref{for:conv:rate:DtC:first:IDy:est}--\eqref{En:LB} and the references therein to Sections \ref{s:Part2} and \ref{s:Part3}. This completes the proof of Theorem \ref{t} in the case $0 < a < 1$.
\medskip

Finally, we treat the case $a = 0$. The proof is analogous to the case $0 < a < 1$; the only differences are several minor changes in the computations. All these changes are ramifications of the change in the upper bound on $V$, which is
\begin{equation*}
  V(r) \leq C - \log |r|.
\end{equation*}
In Table \ref{tab:1} and in the two items listed below we mention all statements of Sections \ref{s:Part1} -- \ref{s:Part3} which are not literally valid for $a=0$, and provide the required modification.

\begin{table}[h]
\centering
\begin{tabular}{ll} 
  \toprule
  Statement & updated estimate \\
  \midrule
  Lemma \ref{l:ox}\ref{l:ox:NN} & $D_n(\ox) + \NN (\ox) \leq C n^{-1} \log n$ \\
  Lemma \ref{l:ophi:orho:V} & $\| \overline \varphi  \|_V^2 - \| \overline \rho \|_V^2 \leq C n^{-1} \log n$ \\
  Lemma \ref{l:Vn}\ref{l:Vn:0} & $V_n(0) \leq \log n + C$ \\
  Lemma \ref{l:Vn}\ref{l:Vn:Lp} & $V_n \uparrow V$ in $L^p(\R)$ as $n \to \infty$ for any $1 \leq p < \infty$ \\
  Lemma \ref{l:VVnastrho} & $\left\{ \begin{array}{r}
  ((V - V_n) * \orho)(x) + ((V - V_n) * \orho)(1-x) \\
  \leq C n^{-1/2} (\log n) \min \left\{ 1, [nx - 1]_+^{-1/2} \right\}
  \end{array} \right.$ \\
  Proposition \ref{prop:LB:En} & $E_n (\bx) - \NN (\bx) - E ( \orho ) 
  \geq -C n^{-1} (\log n)^3$ \\
  \bottomrule
\end{tabular}
\caption{Changes in the estimates for $a = 0$.}
\label{tab:1}
\end{table}

\begin{enumerate}
  \item The constant $C'$ in Lemma \ref{l:Dn:by:NN} also contains a contribution from $V_a$. 
  \item In the proof of Proposition \ref{prop:LB:En}, the estimate in the final display changes as follows:
  \begin{align*}
  \Ti 
  &\geq C' \sum_{k=1}^{n/2} \min_{N \in \R} \Big( N^2 n^{-2} - C k^{-1/2} N n^{-3/2} \log n \Big) \\
  &= C' n^{-2} \sum_{k=1}^{n/2} \min_{N \in \R} N \bigg( N - C (\log n) \Big(\frac kn \Big)^{-1/2} \bigg) \\
  &= - \frac{C'}n (\log n)^2 \frac{C^2}4 \frac 1n \sum_{k=1}^{n/2} \Big(\frac kn \Big)^{-1} 
  \geq - C n^{-1} (\log n)^3.
\end{align*}
\end{enumerate}

\section{Numerical computations on the rate in Theorem \ref{t}}
\label{s:num}

The aim of this section is to compare the upper bound of the convergence rate in Theorem \ref{t} with the actual convergence rate in concrete examples. These concrete examples are given by specific choices for the potentials $V$ and $U$ for which all quantities except for $\tx$ can be computed explicitly. With this aim, we take $\Vreg = 0$ and $U$ a convex polynomial on $D(U)$. Given the qualitatively different profiles of $\orho$ observed in Figure \ref{fig:tx}, we consider two choices for $D(U)$; a bounded interval (Case 1) and $\R$ (Case 2).

For each of these two cases, the method to test Theorem \ref{t} numerically is as follows. First, we compute $\tx$ by minimizing $E_n$ in \eqref{En} numerically with Newton's method for several values of $n$. One observation we did from this data is that
\begin{equation} \label{xi:in:supprho}
  x_i^* \in \supp \orho 
  \quad \text{for all } i = 0,1,\ldots,n
\end{equation}
for each value of $n$ used in our simulations.

Then, instead of using the norm in $H^{-(1-a)/2}$, we use the equivalent norm $\| \cdot \|_V$ to make the computation easier. Indeed, since by \eqref{xi:in:supprho} and Lemma \ref{l:orho}\ref{l:orho:EL} the inequality in \eqref{for:conv:rate:DtC:first:IDy:est} becomes an equality, we obtain that
\begin{equation} \label{en}
  e_n 
  := \| \overline \rho - \tphi \|_V^2
  = 2 \big( E(\tphi) - E(\orho) \big).
\end{equation}
Now, $E(\orho)$ can be computed explicitly given that $\Vreg = 0$ and $U$ is a polynomial. To compute $E(\tphi)$, we set $\bx := \tx$ and $\ell_i := x_i - x_{i-1}$, and obtain from \eqref{Ephi:nice} that
  \begin{equation} \label{EphiS}
    E ( \tphi ) 
  = \frac1{2 n^2} \sum_{i=1}^n \sum_{ j=1 }^n \frac1{ \ell_i \ell_j } \int_{x_{i-1}}^{x_i} \int_{x_{j-1}}^{x_j} V_a (x - y)\, dy dx
    + \frac1n \sum_{i=1}^n \frac1{\ell_i} \int_{x_{i-1}}^{x_i} U(x) \, dx.
  \end{equation}
Since $U$ is a polynomial, both integrals above can be computed explicitly as a function of $\bx$. Hence, once $\tx$ is computed numerically, $e_n$ can be computed without any further numerical error (except for machine precision).

Finally, to compare the numerically computed values for $e_n$ with Theorem \ref{t}, we make the ansatz 
\begin{equation*} 
  e_n = C n^{-p}.
\end{equation*}
Then, $e_n / e_{2n} = 2^p$, and thus 
\begin{equation} \label{p}
  p = \frac{ \log e_n - \log {e_{2n}} }{ \log 2 }.
\end{equation}
Hence, by taking $n$ as subsequent powers of $2$, we can compute $p$ for each pair of subsequent values of $e_n$, and compare the values of $p$ with the theoretically obtained power $1-a$. 
 
\paragraph{Case 1: the bounded domain $D(U) = [0,1]$.}
We take $D(U) = [0,1]$ and $U = 0$ on $[0,1]$. 
Following the computations in, e.g., \cite{KimuraVanMeurs19acc}, we obtain
\begin{equation*}
  \orho (x) 
  = \left\{ \begin{aligned}
    \frac1\pi \big[ x(1-x) \big]^{ - \tfrac12 }
    &\quad \text{if } a = 0  \\
    \frac{ a \Gamma(a) }{ \Gamma (\tfrac{1+a}2)^2 } \big[ x(1-x) \big]^{ - \tfrac{1-a}2 }
    &\quad \text{if } 0 < a < 1
  \end{aligned} \right.
\end{equation*}
and
\begin{equation*}
  E(\orho)
  = \left\{ \begin{aligned}
    \log 2
    &\quad \text{if } a = 0  \\
    \frac{ \pi a \Gamma(a) }{ 2 \Gamma (\tfrac{1+a}2)^2 \cos ( \tfrac{a\pi}2 ) }
    &\quad \text{if } 0 < a < 1,
  \end{aligned} \right.
\end{equation*}
where $\Gamma(\alpha) = \int_0^\infty x^{\alpha-1} e^{-x} \, dx$ is the usual $\Gamma$-function.

With $E(\orho)$ specified, we compute $e_n$ and $p$ in \eqref{en} and \eqref{p} with the method described above. The results are shown in Table \ref{tab:p} and Figures \ref{fig:en} and \ref{fig:p}. We note that $-p$ is the slope of the graphs of $e_n$ in Figure \ref{fig:en}. For all four values of $a$, $e_n$ seems to converge to $0$ as $n \to \infty$. Also, $p$ decreases as $a$ increases. These observations are in line with Theorem \ref{t}. However, for all four values of $a$, the computed value of $p$ is significantly larger than the theoretical prediction $1-a$ from Theorem \ref{t}. 

\begin{figure}[h]
\centering
\begin{tikzpicture}[scale=.889]
\def \x {1.33}
\def \dx {.13}
\def \y {1.3}
\begin{scope}[shift={(0,0)},scale=1]
  \node (label) at (0,0){\includegraphics[height=4cm, trim=0mm 0mm 0mm 0mm]{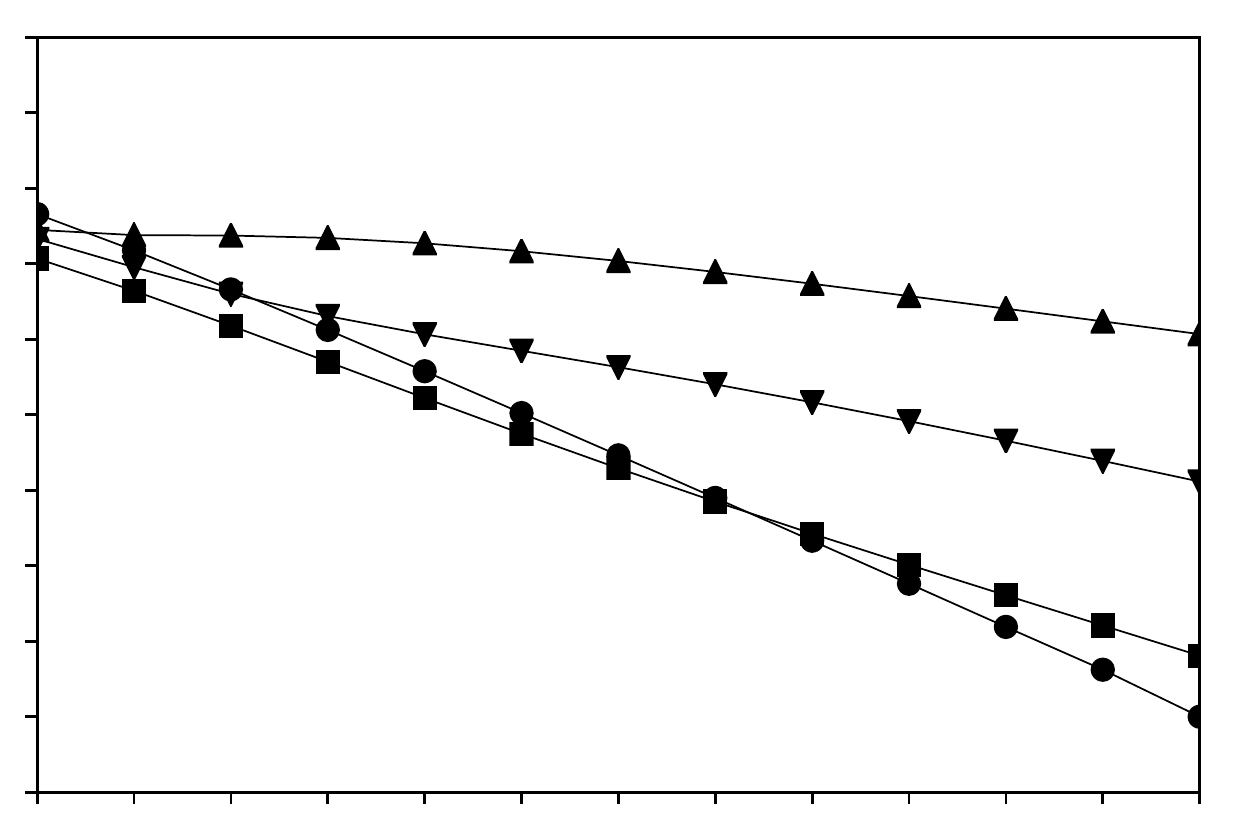}};
  \draw (-3.2, 1.65) node[left] {$10^0$};
  \draw (-3.2, .05) node[left] {$10^{-4}$};
  \draw (-3.2, -1.6) node[left] {$10^{-8}$};
  \draw (-3.2, 2.1) node[above] {$e_n$};
  \draw (-3.2, -2.1) node[below] {$2^2$};
  \draw (-1.07, -2.1) node[below] {$2^6$};
  \draw (1.07, -2.1) node[below] {$2^{10}$};
  \draw (3.2, -2.1) node[below] {$2^{14}$};
  \draw (3.2, -2.1) node[anchor = south west] {$n$};
  \draw (0,2.1) node[above] {Case 1};
\end{scope}
\begin{scope}[shift={(9,0)},scale=1]
  \node (label) at (0,0){\includegraphics[height=4cm, trim=0mm 0mm 0mm 0mm]{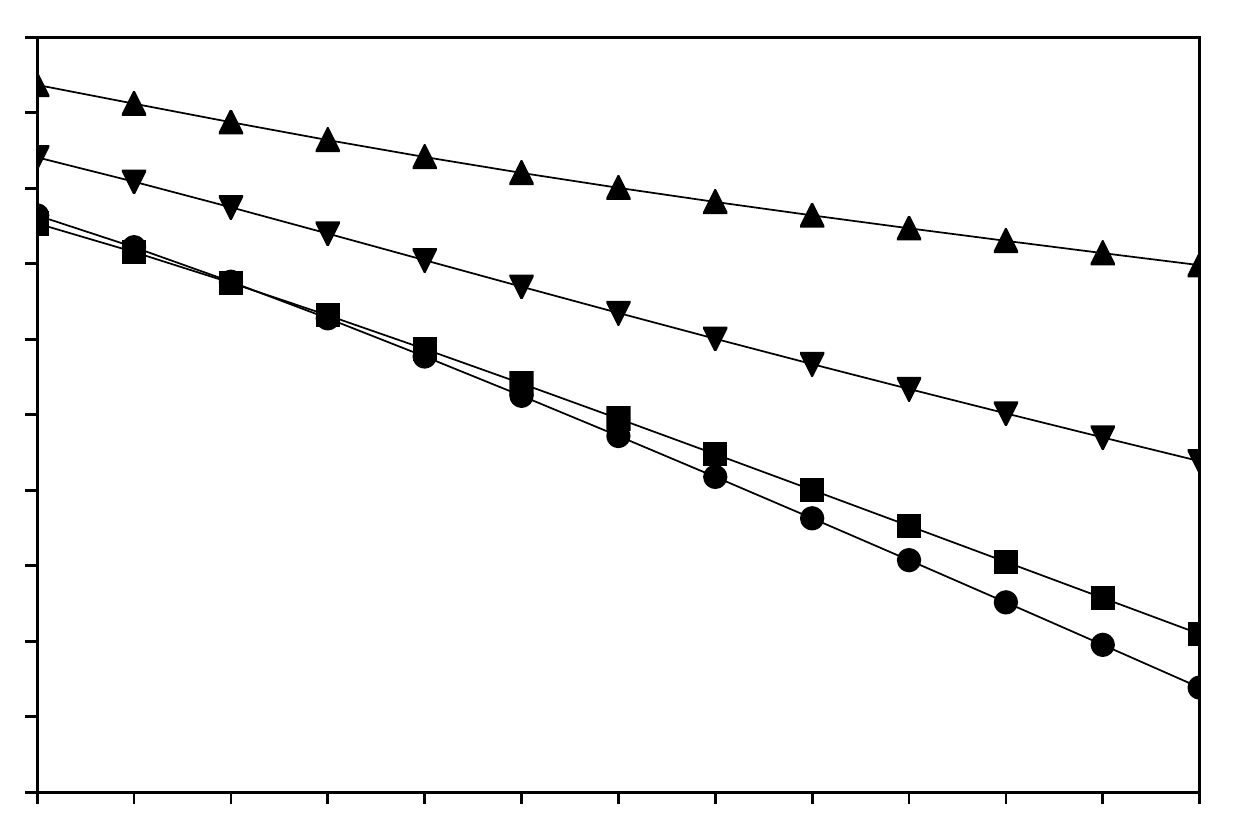}};
  \draw (-3.2, 1.65) node[left] {$10^0$};
  \draw (-3.2, .05) node[left] {$10^{-4}$};
  \draw (-3.2, -1.6) node[left] {$10^{-8}$};
  \draw (-3.2, 2.1) node[above] {$e_n$};
  \draw (-3.2, -2.1) node[below] {$2^2$};
  \draw (-1.07, -2.1) node[below] {$2^6$};
  \draw (1.07, -2.1) node[below] {$2^{10}$};
  \draw (3.2, -2.1) node[below] {$2^{14}$};
  \draw (3.2, -2.1) node[anchor = south west] {$n$};
  \draw (0,2.1) node[above] {Case 2};
\end{scope}
\end{tikzpicture} \\
\caption{The numerically computed values for $e_n$ (see \eqref{en}) in Cases 1 and 2 for the values $a = 0$ ($\bullet$), $a = \frac14$ ({\footnotesize $\blacksquare$}), $a = \frac12$ ($\blacktriangledown$) and $a = \frac34$ ($\blacktriangle$). }
\label{fig:en}
\end{figure}

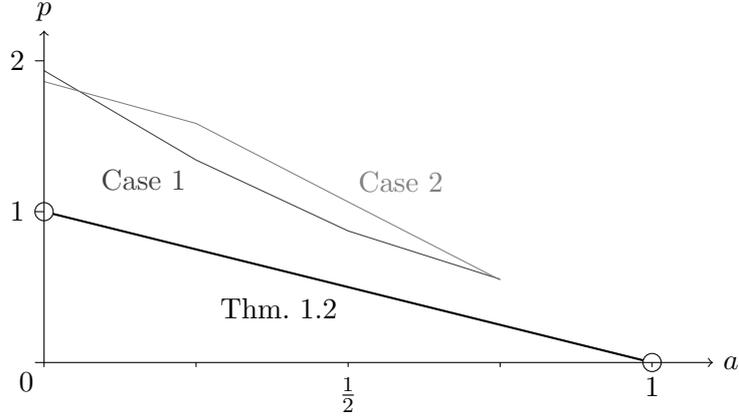
\begin{figure}[h]
\centering
\begin{tikzpicture}[xscale=8, yscale=4]    
\def \w {.015}

\draw[thick] (0,.5) -- (1,0) node[midway, anchor = north east]{Thm.\ \ref{t}};
\begin{scope}[shift={(0,.5)},yscale=2]
  \filldraw[fill = white] (0,0) circle (\w);
\end{scope}
\begin{scope}[shift={(1,0)},yscale=2]
  \filldraw[fill = white] (0,0) circle (\w);
\end{scope}

\draw[black!75] (0, .9675) -- (.25, .67125) node[anchor = north east] {Case 1} -- (.5, .43625) -- (.75, .27625);
\draw[black!50] (0, .93125) -- (.25, .7925) -- (.5, .5325) node[anchor = south west] {Case 2} -- (.75, .27375);
        
\draw[->] (0,-\w) -- (0,1.1) node[above]{$p$};
\draw[->] (-\w,0) -- (1.1,0) node[right]{$a$};
\draw (0,0) node[anchor = north east]{$0$};

\foreach \x in {.25, .5, .75} {
  \draw (\x, 0) --++ (0, -\w);
}
\draw (.5, -\w) node[below] {$\frac12$};
\draw (1, 0) --++ (0, -\w) node[below] {$1$};

\draw (0, .5) --++ (-\w, 0) node[left] {$1$};
\draw (0, 1) --++ (-\w, 0) node[left] {$2$};
\end{tikzpicture} \\
\caption{Values of $p$ as a function of $a$ in Cases 1 and 2 compared with the theoretical prediction from Theorem \ref{t}. The $n$-dependence is removed by taking the average of $p$ over the last four values of $n$ in Table \ref{tab:p}.}
\label{fig:p}
\end{figure}  

\begin{table}[h]
\centering 
\begin{tabular}{c|cccc}
\multicolumn{5}{c}{Values of $p$ in Case 1} \\
\toprule
$n$ & $a = 0$ & $a = \frac14$ & $a = \frac12$ & $a = \frac34$ \\
\midrule
$2^{2}$ & $1.60$ & $1.42$ & $1.24$ & $0.23$ \\ 
$2^{3}$ & $1.72$ & $1.54$ & $1.17$ & $0.02$ \\ 
$2^{4}$ & $1.78$ & $1.59$ & $0.98$ & $0.10$ \\ 
$2^{5}$ & $1.82$ & $1.60$ & $0.80$ & $0.24$ \\ 
$2^{6}$ & $1.84$ & $1.57$ & $0.72$ & $0.35$ \\ 
$2^{7}$ & $1.86$ & $1.52$ & $0.72$ & $0.43$ \\ 
$2^{8}$ & $1.87$ & $1.47$ & $0.75$ & $0.48$ \\ 
$2^{9}$ & $1.88$ & $1.41$ & $0.79$ & $0.52$ \\ 
$2^{10}$ & $1.89$ & $1.37$ & $0.83$ & $0.54$ \\ 
$2^{11}$ & $1.90$ & $1.34$ & $0.86$ & $0.55$ \\ 
$2^{12}$ & $1.88$ & $1.33$ & $0.89$ & $0.56$ \\ 
$2^{13}$ & $2.07$ & $1.33$ & $0.91$ & $0.56$ \\ 
\midrule
Thm. & $\approx 1$ & $0.75$ & $0.50$ & $0.25$ \\
\bottomrule 
\end{tabular}
\qquad
\begin{tabular}{c|cccc}
\multicolumn{5}{c}{Values of $p$ in Case 2} \\
\toprule
$n$ & $a = 0$ & $a = \frac14$ & $a = \frac12$ & $a = \frac34$ \\
\midrule
$2^{2}$ & $1.40$ & $1.26$ & $1.07$ & $0.83$ \\ 
$2^{3}$ & $1.52$ & $1.36$ & $1.13$ & $0.82$ \\ 
$2^{4}$ & $1.61$ & $1.43$ & $1.16$ & $0.79$ \\ 
$2^{5}$ & $1.68$ & $1.48$ & $1.17$ & $0.75$ \\ 
$2^{6}$ & $1.73$ & $1.52$ & $1.17$ & $0.70$ \\ 
$2^{7}$ & $1.77$ & $1.55$ & $1.15$ & $0.66$ \\ 
$2^{8}$ & $1.80$ & $1.56$ & $1.14$ & $0.62$ \\ 
$2^{9}$ & $1.82$ & $1.57$ & $1.12$ & $0.59$ \\ 
$2^{10}$ & $1.84$ & $1.58$ & $1.09$ & $0.57$ \\ 
$2^{11}$ & $1.86$ & $1.59$ & $1.07$ & $0.55$ \\ 
$2^{12}$ & $1.87$ & $1.59$ & $1.06$ & $0.54$ \\ 
$2^{13}$ & $1.88$ & $1.58$ & $1.04$ & $0.53$ \\ 
\midrule
Thm. & $\approx 1$ & $0.75$ & $0.50$ & $0.25$ \\
\bottomrule 
\end{tabular}
\caption{The numerically computed values for $p$ (see \eqref{p}) in Cases 1 and 2. The bottom row is the prediction from Theorem \ref{t}. In Case 1 with $a = 0$ the last two values of $p$ seem off; we expect that this is due to the numerical rounding errors that were made when \eqref{EphiS} was computed. } 
\label{tab:p}
\end{table}

\paragraph{Case 2: the infinite domain $D(U) = \R$.}

We take $D(U) = \R$ and 
$$U(x) = \gamma_a \Big(x - \frac12 \Big)^2,
\qquad \gamma_a := \left\{ \begin{aligned}
    4
    &\quad \text{if } a = 0  \\
    \frac{ 2 \pi a^2 (2+a) \Gamma(a) }{ \Gamma (\tfrac{1+a}2)^2 \cos ( \tfrac{a\pi}2 ) }
    &\quad \text{if } 0 < a < 1.
  \end{aligned} \right.$$
The constant $\gamma_a$ is chosen such that $\supp \orho = [0,1]$. Following the computations in, e.g., \cite{KimuraVanMeurs19acc} and \cite[Chap.\ IV, Thm.\ 5.1]{SaffTotik97}, we obtain
\begin{equation*}
  \orho (x) 
  = \left\{ \begin{aligned}
    \frac8\pi \big[ x(1-x) \big]^{ \tfrac12 }
    &\quad \text{if } a = 0  \\
    \frac{ 4 (2+a) a \Gamma(a) }{ (1+a) \Gamma (\tfrac{1+a}2)^2 } \big[ x(1-x) \big]^{ \tfrac{1+a}2 }
    &\quad \text{if } 0 < a < 1
  \end{aligned} \right.
\end{equation*}
and
\begin{equation*}
  E(\orho)
  = \left\{ \begin{aligned}
    \log 2
    &\quad \text{if } a = 0  \\
    \frac{ \pi (2+a)^2 a \Gamma(a) }{ 2 (4+a) \Gamma (\tfrac{1+a}2)^2 \cos ( \tfrac{a\pi}2 ) }
    &\quad \text{if } 0 < a < 1.
  \end{aligned} \right.
\end{equation*}

Similar to Case 1, we compute $e_n$ and $p$. The results are shown in Figure \ref{fig:en} and Table \ref{tab:p}. The similarities with Case 1 are that $e_n$ seems to converge to $0$ as $n \to \infty$, that $p$ decreases as $a$ increases, and that the computed value of $p$ is significantly larger than the theoretical prediction $1-a$ from Theorem \ref{t}. 
\smallskip

We end this section with three quantitative comparisons between Cases 1 and 2:
\begin{itemize}
  \item For $a = 0$, the values of $e_n$ and $p$ are similar. 
  \item When $a$ increases, the values of $e_n$ are larger in Case 2 than in Case 1 (at least when $n$ is not too large). This is consistent with Figure \ref{fig:tx}, where the graph of $\tphi$ seems a better match with the graph of $\orho$ in Case 1 than in Case 2.
  \item Yet, the values of $p$ are larger in Case 2, which would imply that for $n$ large enough, the values of $e_n$ in Case 2 are smaller than those in Case 1. A possible reason for this could be the singularities of $\orho$ at $x = 0$ and $x = 1$ in Case 1. 
\end{itemize}

\paragraph{Acknowledgments}
MK gratefully acknowledge support from JSPS KAKENHI Grant Number 17H02857.

PvM gratefully acknowledges support from the International Research Fellowship of the Japanese Society for the Promotion of Science and the associated JSPS KAKENHI Grant Number 15F15019.

\end{document}